\newtheorem{theorem}{Theorem}[section]
\newtheorem{lemma}[theorem]{Lemma}
\newtheorem{remark}[theorem]{Remark}
\numberwithin{equation}{section}
\newcommand{\be}{\begin{equation}}
\newcommand{\ee}{\end{equation}}
\newcommand{\bqa}{\begin{eqnarray}}
\newcommand{\eqa}{\end{eqnarray}}
\newcommand{\bqn}{\begin{eqnarray*}}
\newcommand{\eqn}{\end{eqnarray*}}
\newcommand{\bdes}{\begin{description}}
\newcommand{\edes}{\end{description}}
\newcommand{\bitem}{\begin{itemize}}
\newcommand{\eitem}{\end{itemize}}
\newcommand{\bnum}{\begin{enumerate}}
\newcommand{\enum}{\end{enumerate}}
\newcommand{\bsubn}{\begin{subnumcases}}
\newcommand{\esubn}{\end{subnumcases}}
\newcommand{\ga}{\alpha}
\newcommand{\gl}{\lambda}
\newcommand{\gD}{\Delta}
\newcommand{\bfell}{{\mbox{\boldmath$\ell$}}}
\newcommand{\bgS}{{\mbox{\boldmath$\Sigma$}}}
\newcommand{\bga}{{\mbox{\boldmath$\alpha$}}}
\newcommand{\rtr}{{\rm tr}}
\newcommand{\non}{\nonumber\\}
\newcommand{\bbA}{{\bf A}}
\newcommand{\bba}{{\bf a}}
\newcommand{\bbB}{{\bf B}}
\newcommand{\bbC}{{\bf C}}
\newcommand{\bbe}{{\bf e}}
\newcommand{\bbE}{{\bf E}}
\newcommand{\bbI}{{\bf I}}
\newcommand{\bbj}{{\bf j}}
\newcommand{\bbJ}{{\bf J}}
\newcommand{\bbM}{{\bf M}}
\newcommand{\bbQ}{{\bf Q}}
\newcommand{\bbP}{{\bf P}}
\newcommand{\bbu}{{\bf u}}
\newcommand{\bbv}{{\bf v}}
\newcommand{\bbX}{{\bf X}}
\newcommand{\bbx}{{\bf x}}
\newcommand{\bbY}{{\bf Y}}
\newcommand{\bay}{\begin{array}}
\newcommand{\eay}{\end{array}}
\newcommand{\E}{{\mathbb{E}}}
\newcommand{\tr}{{\rm{tr}}}
\newcommand{\asto}{\stackrel{a.s.}{\to}}
\begin{document}

\begin{frontmatter}
\title{
Strong Consistency of the AIC, BIC, $C_p$ and
KOO Methods in High-Dimensional Multivariate Linear Regression
}
\runtitle{Strong consistency of  AIC, BIC, $C_p$ and
KOO methods}

\begin{aug}
\author{\fnms{Zhidong} \snm{Bai}\thanksref{t1,m1}\ead[label=e1]{baizd@nenu.edu.cn}},
\author{\fnms{Yasunori} \snm{Fujikoshi}\thanksref{t2,m2}\ead[label=e2]{fujikoshi\_y@yahoo.co.jp}}
\and
\author{\fnms{Jiang} \snm{Hu}\thanksref{t3,m1}
\ead[label=e3]{huj156@nenu.edu.cn}}

\thankstext{t1}{Supported by NSFC 11571067 and 11471140.}
\thankstext{t2}{Supported by NSFC 11771073.}
\thankstext{t3}{Supported by the Ministry of Education, Science, Sports, and Culture, a Grant-in-Aid for Scientific Research (C), \#16K00047,  2016-2018.}
\runauthor{Z. D. Bai et al.}

\affiliation{Northeast Normal University\thanksmark{m1} and Hiroshima University\thanksmark{m2}}

\address{KLASMOE and School of Mathematics \& Statistics\\ Northeast Normal University, Changchun, 
China.\\
\printead{e1};~~\printead*{e3}
\phantom{E-mail:\ }}

\address{Department of Mathematics\\
 Graduate School of Science\\
  Hiroshima University, Hiroshima, Japan.\\
\printead{e2}}
\end{aug}

\begin{abstract}
Variable selection is essential for improving inference and interpretation in  multivariate linear regression.  
Although a number of alternative regressor selection criteria have been suggested, the most prominent and widely used are the Akaike information criterion (AIC), Bayesian information criterion (BIC), Mallow's $C_p$, and their modifications. However, for the data with high dimensionality in both responses and covariates, experience has shown that the performance of these classical criteria is not always satisfactory. 
In the present article,  we begin by presenting the necessary and sufficient conditions (NSC) for the strong consistency of the high-dimensional AIC, BIC, and $C_p$, based on which we can identify some reasons for their poor performance. Specifically, we show that under certain mild high-dimensional conditions, if the BIC is strongly consistent, then the AIC is strongly consistent, but not vice versa. This result contradicts the classical understanding. 
 In addition, we consider some NSC for the strong consistency of the high-dimensional kick-one-out (KOO) methods introduced by \cite{ZhaoK86D} and \cite{NishiiB88S}. 
Furthermore, we propose two general methods based on the KOO methods  and prove their strong consistency. The proposed general methods remove  the penalties while simultaneously reducing the conditions for the dimensions and sizes of the regressors. Simulation studies and real data analysis support our  conclusions and show that the convergence rates of the two proposed general KOO methods are much faster than those of the original ones.
\end{abstract}

\begin{keyword}[class=MSC]
\kwd[Primary ]{62J05}
\kwd{62H12}
\kwd[; secondary ]{62E20}
\end{keyword}

\begin{keyword}
\kwd{AIC}
\kwd{BIC}
\kwd{$C_p$}
\kwd{KOO methods}
\kwd{Strong consistency}
\kwd{High-dimensional criteria}
\kwd{Multivariate linear regression}
\kwd{Variable selection}
\end{keyword}

\end{frontmatter}
\section{ Introduction}

In multivariate statistical analysis, the most general and favorable model to investigate the relationship between a  predictor matrix  $\bbX$ and a response matrix  $\bbY$ is the multivariate linear regression (MLR) model.  More specifically,
let 
\begin{align}\label{fullmodel}
	\bbY=\bbX\Theta+\bbE,
\end{align}
where $\bbY=(y_{ij}): n\times p$ (the responses), $\bbX=(\bbx_1,\dots,\bbx_k): n\times k$ (the predictors), $\Theta=(\bm\theta_1, \dots,\bm\theta_k)': k\times p$ (the regression coefficients), and $\bbE=(\bbe_1,\dots,\bbe_p)=(e_{ij}): n\times p$ (the random errors).
The goal in MLR analysis is to estimate the regression coefficients $\Theta$. The estimates should be such that the
estimated regression plane explains the variation in the
values of the responses with great accuracy.
The classical  linear least-squares solution is to estimate the matrix of regression coefficients $\hat\Theta$ by
\begin{align*}
	\hat\Theta=(\bbX'\bbX)^{-1}\bbX'\bbY.
\end{align*}
However, model \eqref{fullmodel} (referred to hereinafter as the full model) is not always a good model because some of the predictors may be uncorrelated with the responses, i.e., the corresponding rows of $\Theta$ are zeros.  
The Akaike information criterion (AIC), Bayesian information criterion (BIC), and Mallows's $C_p$  are among the most popular and versatile strategies for model selection among the predictors. 

Let $\bbj$ be a subset of ${\bm \omega}=\{1,2,\cdots,k\}$ and $\bbX_{\bbj}=(\bbx_j, j\in\bbj)$ and $\Theta_{\bbj}=(\bm\theta_j, j\in\bbj)'$.  Denote model $\bbj$ by
 \be\label{eq1}
 M_{\bbj}:\ \ \bbY=\bbX_{\bbj}\Theta_{\bbj}+\bbE.
 \ee
 Akaike's seminal paper \citep{Akaike73I} proposed the use of the Kullback-Leibler distance as a fundamental basis for model selection known as the AIC, which is defined as follows:
\begin{equation}\label{AIC}
  A_{\bbj}=n\log(|\widehat \bgS_{\bbj}|)+2[k_{\bbj}p+\frac12p(p+1)]+np(\log(2\pi)+1),
\end{equation}
where
\begin{align}\label{nhatS}
  n\widehat\bgS_{\bbj}=\bbY'\bbQ_{\bbj}\bbY,~~\bbQ_{\bbj}=\bbI_n-\bbP_{\bbj},~~
\bbP_{\bbj}=\bbX_{\bbj}(\bbX_{\bbj}'\bbX_{\bbj})^{-1}\bbX_{\bbj}',
\end{align}
and $k_{\bbj}$ is the cardinality of subset $\bbj$. 
Note that $\bbP_{\bbj}$ is orthogonal projection of rank $k_{\bbj}$ onto the subspace spanned by $\bbX_{\bbj}$ and  $\bbQ_{\bbj}$ is the orthogonal projection of rank $n-k_{\bbj}$   onto the
orthogonal complement subspace panned by $\bbX_{\bbj}$. 
The BIC, which is also known as the Schwarz criterion, was proposed by \cite{Schwarz78E} in the form of a penalized log-likelihood function, in which the penalty is equal to the logarithm of the sample size times the number of estimated parameters in the model, i.e., 
 \begin{equation}\label{BIC}
  B_{\bbj}=n\log(|\widehat \bgS_{\bbj}|)+\log(n)[k_{\bbj}p+\frac12p(p+1)]+np(\log(2\pi)+1).
\end{equation}
A criterion with behavior similar to that of the AIC for variable selection in regression models is Mallows's $C_p$ proposed by \cite{Mallows73S}, which is defined as follows:
\begin{equation}\label{Cp}
  C_{\bbj}=(n-k)\rtr(\widehat\bgS_{\bm \omega}^{-1}\widehat\bgS_{\bbj})+2pk_{\bbj}.
\end{equation}
  Refer to \citep{Fujikoshi83C,SparksC83M,NishiiB88S} for additional details of formulas \eqref{AIC}, \eqref{BIC}, and \eqref{Cp}.
Then, the AIC, BIC, and $C_p$ rules are used to select
 \begin{align}\label{hatABC}
 	 \hat \bbj_A=\arg \min A_{\bbj},\quad \hat \bbj_B=\arg \min B_{\bbj}\quad\mbox{and}\quad
 \hat\bbj_C= \arg\min C_{\bbj},
 \end{align}
 respectively. 
 
 If the data are generated from a  model (referred to hereinafter as a true model) that is one of the candidate models, then we would apply a model selection method to identify the true model. Then, some optimality, such as consistency, is desirable for model selection. A model selection method is weakly consistent if, with probability tending to one, the selection method is able to select the true model from the candidate models. Strong consistency means that the true model tends to almost surely be selected. Strong consistency implies weak consistency but not vice versa. Thus, strong consistency can provide a deeper understanding of the selection methods. 
 Under a large-sample asymptotic framework, i.e., dimension $p$ is fixed and $n$ tends to infinity, the AIC and $C_p$ are not  consistent \citep{Fujikoshi85S,FujikoshiV79E}, but the BIC is strongly consistent \citep{NishiiB88S}. 
 However, in recent years, statisticians have increasingly noticed that these properties cannot be adapted to high-dimensional data. In particular, experience has shown that the classical model selection criteria tend to select more variables than necessary when $k$ and $p$ are large. For the case in which $k$ is fixed, $p$ is large but smaller than $n$, and $p/n\to c\in [0,1)$, which is referred to as a large-sample and large-dimensional asymptotic framework, the BIC has been shown to be not consistent, but the AIC and $C_p$ are weakly consistent under certain conditions (see, e.g., \citep{FujikoshiS14C,YanagiharaW15C,Yanagihara15C}).  

To clarify the model selection methods, in the present paper, we focus on the strongly consistent properties 
under a large-model, large-sample, and large-dimensional (LLL) asymptotic framework, i.e., $
\min\{k,p,n\}$ tends to infinity for the case in which $ p/n\to c\in(0,1)$, $ k/n\to\ga\in(0,1-c)$, and the true model size is fixed.
We do not intend to judge the advantages and disadvantages of the existing selection methods in the MLR model beyond their consistency properties in the present paper, because these advantages and disadvantages depend on their intended applications and on the nature of the data. Our goal is to explain the theoretical insights of the classical selection methods and modified methods under an LLL framework, and we hope that this article will stimulate further research that will provide an even clearer understanding of high-dimensional variable selection. 
Here, we refer to three recent reviews comparing the variable selection methods \citep{AnzanelloF14R,BleiK17V,HeinzeW18V}. In addition, note that in the present paper, we assume that $n -k > p$. A number of studies have examined sparse and penalized methods for high-dimensional data for which this condition is not satisfied, see, e.g., \citep{LiN15M,ZouH05R}.

We now describe the four main contributions of the present paper. 
\begin{itemize}
	\item First, in Section \ref{ABC}, we present the necessary and sufficient conditions (NSC) for the strong consistency of variable selection methods based on the AIC, BIC, and $C_p$ under an LLL asymptotic framework, including the ranges of $c$ and $\alpha$, the moment condition of random errors (our results do not require the normality condition), and the convergence rate of the noncentrality matrix. Specifically, on the basis of these results, we conclude that under an LLL asymptotic framework, if the BIC is strongly consistent, then the AIC is strongly consistent, but not vise versa, which contradicts the classical understanding. 
	\item Second, in Section \ref{KOO}, we examine the strongly consistent properties of the kick-one-out (KOO) methods based on the AIC, BIC, and $C_p$ under an LLL asymptotic framework, which were introduced by \cite{ZhaoK86D}  and \cite{NishiiB88S} and followed by \cite{FujikoshiS18C}. The KOO methods, which were proposed for the computation problem in the classical AIC, BIC, and $C_p$, reduce the number of computational statistics from  $2^k-1$ to $k$. In addition, \cite{NishiiB88S} showed that under a large-sample asymptotic framework, the KOO methods share the same conditions and strong consistency of the classical  AIC and BIC. However, in the present paper, we find that under an LLL  asymptotic framework, the KOO methods have higher costs for dimension conditions than do the AIC, BIC, and $C_p$ for strong consistency. 
\item Third, on the basis of the KOO methods, in Section \ref{GKOO}, we propose two general KOO methods that not only remove  the penalty terms but also reduce the conditions for the dimensions and sizes of the predictors. Furthermore, the sufficient condition is given for their strong consistency.  The proposed general KOO methods have considerable advantages, such as simplicity of expression, ease of computation, limited restrictions, and fast convergence. 
\item Fourth, random matrix theory (RMT) is introduced to model selection methods in high-dimensional MLR.  The new theoretical results and the concepts behind their proofs are applicable to numerous other model selection methods, such as the modified AIC and $C_p$ \citep{FujikoshiS97M,Bozdogan87M}. Furthermore, the technical tool developed in the present paper is applicable to future research, e.g., the 
growth curve model \citep{EnomotoS15C, FujikoshiE13H},
multiple discriminant analysis \citep{Fujikoshi83C,FujikoshiS16H},
principal component analysis \citep{FujikoshiS16S,BaiC18C}, and canonical correlation analysis \citep{NishiiB88S,BaoH18C}.
\end{itemize}

The remainder of this paper is organized as follows. In Section 2, we present the necessary notation and assumptions for the MLR model under an LLL asymptotic framework. The main results on the strong consistency of the AIC, BIC, $C_p$, KOO, and general KOO methods are stated in Section 3. We  present some simulation studies and  two real data examples  in Section 4 and  Section 5 respectively to illustrate the performance of our results. 
The main theorems are proven in Sections 6 and 7, and other potential applications are briefly discussed in Section 8. Finally, additional technical results are present in the Appendix.

\section{Notation and Assumptions} 
In this section, we introduce the notation and assumptions required for our main results to hold. 
Recall the  MLR model (\ref{fullmodel})
\bqn
M:\ \ \bbY=\bbX\Theta+\bbE,
\eqn
$\bbj$ is a subset of ${\bm \omega}=\{1,2,\cdots,k\}$, $k_{\bbj}$ is  the cardinality of  set $
\bbj$, $\bbX_{\bbj}=\{\bbx_j, j\in\bbj\}$,  and $\Theta_{\bbj}=\{\theta_j, j\in\bbj\}$.
Model $\bbj$  is denoted by
 \be\label{eq1}
 M_{\bbj}:\ \ \bbY=\bbX_{\bbj}\Theta_{\bbj}+\bbE.
 \ee
 Denote the true model as $\bbj_*$, and
 $$M_{\bbj_*}:\ \ \bbY=\bbX_{\bbj_*}\Theta_{\bbj_*}+\bbE.$$ 
We first suppose that the model and the random errors satisfy the following conditions: 
\begin{itemize}
\item[(A1):] The true model $\bbj_*$ is  a subset of  set  ${\bm \omega}$, and $k_*:=k_{\bbj_*}$ is fixed.
\item[(A2):]  $\{e_{ij}\}$  are independent and identically distributed (i.i.d.) with zero means, unit 
 variances, and  finite fourth moments.  
 \end{itemize}
Note that in a typical multivariate regression model, the rows of $\bbE$ are assumed to be i.i.d. from a $p$-variate distribution with zero mean and covariance matrix $\Sigma$. Since we consider the distribution of a statistic invariant under the transformation $\bbY$ $\to$ $\bbY \Sigma^{-1/2}$, without loss of generality, we may assume that $\Sigma=\bbI_p$ by replacing $\Theta$ with $\Theta \Sigma^{-1/2}$. Moreover, the finite fourth moments condition is required only for the technical proof; we believe finite second moments are sufficient.

We assume that 
\begin{itemize}
\item[(A3):] $\bbX'\bbX$ is positive  definite.
 \end{itemize}
 Note that if assumption (A3) is satisfied, then for any $\bbj\subset\bm\omega$, $\bbX_{\bbj}'\bbX_{\bbj}$ is invertible because $\bbX_{\bbj}'\bbX_{\bbj}$ is a principal submatrix of $\bbX'\bbX$.  

 In the present paper, we focus primarily on an LLL asymptotic framework, which is specified as follows.
 \begin{itemize}
\item[(A4):]  Assume that as $\{k,p,n\}\to\infty$, $ c_n:=p/n\to c\in(0,1)$ and $\alpha_k:= k/n\to\ga\in(0,1-c)$.
\end{itemize}
We assume that $c$ and $\alpha$ are larger than 0 because we can take  $\alpha$ and $c$ as two unknown parameters, the consistent estimators of which are  $\alpha_k$ and $c_n$, respectively, no information for the convergence of $\{k,p,n\}$ exists for any dataset at hand, and
 $\alpha_k$ and $c_n$ are always positive. 
  Therefore,  the assumption that  $c$ and $\alpha$ are positive is reasonable. In addition, if the model size $k$  is bigger than the sample size $n$,  one can use the screening methods to reduce it to a relatively large scale that satisfies assumption (A4), e.g., sure independence screening method based on the distance correlation \citep{LiZ12F}, interaction pursuit via distance correlation \citep{KongL17I}. For the screening methods, we refer to \citep{FanL08S,FanL10S} and their citations for more details. 
Here one should notice that not all the existing variable screening methods can perform well with multiple responses. 

 Next, we present additional notation that is frequently used herein.
 Denote
 \bqn
 \bbJ_+=\{\bbj: \bbj\supset \bbj_*\},~
 \bbJ_-=\{\bbj: \bbj \not\supset \bbj_*\}~\mbox{and}~
 \bbJ=\bbJ_-\cup\bbj_*\cup\bbJ_+.
 \eqn
In the following, we use the terms overspecified model and underspecified model to indicate whether a model $ \bbj$ includes the true model (i.e., $\bbj\in  \bbJ_+$) or not (i.e., $\bbj\in  \bbJ_-$).
 If $\bbj$ is an overspecified model and $k_{\bbj}-k_{\bbj_*}=m>0$, then subsets of models $\bbj=\bbj_0\supset\bbj_{-1}\supset\cdots\supset\bbj_{-m}=\bbj_*$ exist such that each consecutive pair decreases one and only one index,
i.e., $\bbj_t=\bbj_0\backslash\{j_{1},\cdots,j_{-t}\}$ for $t=0,-1,\dots, -m$, which means that $j_
{-t}$ is in $\bbj_{t+1}$ but not in $\bbj_t$.
If  $\bbj$ is an underspecified model, denote $\bbj_-=\bbj\cap \bbj_*$, $\bbj_+=\bbj\cap \bbj_*^c$ and write the elements in $\bbj_*\cap\bbj_-^c$ as $i_1,\cdots, i_s$ and the elements in $\bbj_+$ as $j_1,\cdots,j_m$. Define the model index set $\bbj_t=\bbj\cup\{i_{t+1},\cdots,i_{s}\}$ for $t=0,1,\cdots, s$ with convention that $\bbj_{s}=\bbj$, which also indicates that $i_
{t}$ is in $\bbj_{t+1}$ but not in $\bbj_t$. Moreover, we can define $\bbj_t=\bbj_0\backslash\{j_{t},\cdots,j_{-t}\}$ for $t=0,-1,\dots, -m$, but we should note that in this case, $\bbj_0 \neq  \bbj$. The positive subscript of $\bbj$ indicates the addition of indexes, and, correspondingly, the negative subscript of $\bbj$ indicates the removal of indexes. 
In addition, we denote $\bba_t=\bbQ_{\bbj_{_t}}\bbx_{i_t}/\|\bbQ_{\bbj_t}\bbx_{i_t}\|$ for $t>0$ and  $\bba_t=\bbQ_{\bbj_t}\bbx_{j_{-t}}/\|\bbQ_{\bbj_t}\bbx_{j_{-t}}\|$ for $t<0$.
Thus, for any integer $t$, the following two equations are straightforward: 
\begin{align}\label{Pat}
  \bbP_{\bbj_{t+1}}=\bbP_{\bbj_t}+\bba_t\bba_t'\end{align}
and
\begin{align}\label{Qat}
\bbQ_{\bbj_{t+1}}=\bbQ_{\bbj_t}-\bba_t\bba_t'.
\end{align}
We hereinafter denote the spectral norm for a matrix by $\|\cdot\|$. 
For the underspecified model, our results require another assumption. Denote 
\begin{align*}
  \Phi=\frac1n\Theta'_{\bbj_*}\bbX_{\bbj_*}'\bbX_{\bbj_*}\Theta_{\bbj_*} ~\mbox{and}~  \Phi_\bbj=\frac1n\Theta'_{\bbj_*}\bbX_{\bbj_*}'\bbQ_{\bbj}\bbX_{\bbj_*}\Theta_{\bbj_*},
\end{align*}
we assume that
\begin{itemize}
\item[(A5):]   $\|\Phi\|$ is bounded uniformly in $n$.  
 \end{itemize} 
   Note that if assumption (A5) is satisfied, then for any $\bbj\subset\bm\omega$, $\|\Phi_\bbj\|\leq \|\Phi\|$  is bounded uniformly. On the other hand, if assumption (A5) does not hold, then the following assumption is considered:
  \begin{itemize}
  \item[(A5'):]   For any $\bbj\in \bbJ_-$, as $\{k,p,n\}\to\infty$, $\|\Phi_\bbj\|\to\infty$.
 \end{itemize} 
 Notice that if (A5) does not hold, there may also exist some
 $\bbj\in \bbJ_-$ such that $\|\Phi_\bbj\|$ is bounded uniformly, which does not satisfy (A5'). But, this case can be considered by combining the results under the assumptions (A5) and (A5'). Thus, in this paper we omit the detail results in the special situation.  
Throughout the present paper,  we use $o_{a.s}(1)$ to denote almost surely scalar negligible entries.
  
 \section{Main results}
 
 In this section, we present the main results of the present paper, including the strong consistency of the AIC, BIC,  $C_p$, KOO methods and general KOO methods.
First, we present some notation which will be used in the following frequently. For $\bbj\in \bbJ_-$ with  $k_{\bbj_+}=m\geq0$ and $k_{\bbj_*\cap\bbj_-^c}=s>0$,  we denote   
\begin{align*}
  \tau_{n\bbj}&:=(1-\alpha_m)^{s-p}|(1-\alpha_m)\bbI+\Phi_{\bbj}|\to\tau_{\bbj}\leq\infty\\
  \kappa_{n\bbj}&:=\tr(\Phi_{\bbj})\to\kappa_{\bbj}\leq\infty.
  \end{align*}
  We hereinafter denote $\alpha_m:=m/n$, and due to assumption (A4), if  $m\leq k$,  then we know that $\lim\alpha_m\leq\alpha$.
  It is obvious that if assumption (A5) holds, $\tau_{\bbj}$ and $\kappa_{\bbj}$ are bounded for all $\bbj\in \bbJ_-$. 
  For  simplicity, we denote $\tau_{\bbj}^0:=\lim_{p,n}|\bbI+\Phi_{\bbj}|$, which equals $\tau_{\bbj}$ when  $\alpha_m\to0$.  
 

\subsection{Strong consistency of the AIC, BIC, and $C_p$}\label{ABC}
Now, we are in position to present our main results concerning the strong consistency of the AIC, BIC, and $C_p$.
Let 
  \begin{align*}
  \phi(\alpha,c)&:= 2c\ga+\log\left(\frac{(1-c)^{1-c}(1-\ga)^{1-\ga}}{(1-c-\ga)^{1-c-\ga}}\right)\\
  \psi(\ga,c)&:=\frac{c(\ga-1)}{1-\ga-c}+2c,
\end{align*}
then we have the following theorems. 
\begin{theorem}\label{th1}
Suppose that assumptions {\rm (A1) through (A5)} hold.  
\begin{itemize}
	\item[{\rm (1)}] For $\phi(\alpha,c)>0$,  
	\begin{itemize}
	\item  if for any $\bbj\in \bbJ_-$ with $k_{\bbj_+}=m\geq0$ and        $k_{\bbj_*\cap\bbj_-^c}=s>0$, such that $\log(\tau_{\bbj})>(s-m)(\log(1-c)+2c)$, then the variable selection method based on the AIC is strongly consistent;
	\item   if there exists some $\bbj\in \bbJ_-$ with $k_{\bbj_+}=m\geq0$ and        $k_{\bbj_*\cap\bbj_-^c}=s>0$,  such that $\log(\tau_{\bbj})< (s-m)(\log(1-c)+2c)$,  then the variable selection method based on the AIC is almost surely under-specified. 
	\end{itemize}
	If $\phi(\alpha,c)< 0$, then the variable selection method based on the AIC is almost surely over-specified. 

  \item[{\rm (2)}] The variable selection method based on the BIC is almost surely under-specified. 
  \item[{\rm (3)}] For $\psi(\alpha,c)>0$,
  \begin{itemize}
  \item if for any $\bbj\in \bbJ_-$ with $k_{\bbj_+}=m\geq0$ and         $k_{\bbj_*\cap\bbj_-^c}=s>0$, such that $\kappa_{\bbj}>(s-m)\psi(\alpha,c)(1-\alpha-c)/(1-\alpha)$, then  the variable selection method based on $C_p$ is strongly consistent;
  \item if there exists some $\bbj\in \bbJ_-$ with $k_{\bbj_+}=m\geq0$ and        $k_{\bbj_*\cap\bbj_-^c}=s>0$,  such that $\kappa_{\bbj}<(s-m)\psi(\alpha,c)(1-\alpha-c)/(1-\alpha)$, then the variable selection method based on $C_p$ is almost surely under-specified.
  \end{itemize}
  If $\psi(\alpha,c)< 0$,  then the variable selection method based on $C_p$ is almost surely over-specified. 

\end{itemize}
\end{theorem}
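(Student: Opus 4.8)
The plan is to recast strong consistency as the statement that, almost surely and for all large $n$, the criterion at the true model $\bbj_*$ is the strict minimiser. For each criterion $\cF\in\{A,B,C\}$ I would analyse the difference $\cF_{\bbj}-\cF_{\bbj_*}$, bounding $\min_{\bbj\neq\bbj_*}(\cF_{\bbj}-\cF_{\bbj_*})$ from below to obtain consistency or exhibiting a negative value to obtain a failure mode, treating the overspecified family $\bbJ_+\setminus\{\bbj_*\}$ and the underspecified family $\bbJ_-$ separately. The workhorse is the nested decomposition (\ref{Pat})--(\ref{Qat}): along any chain of single-index updates $n\widehat\bgS$ changes by the rank-one term $(\bbE'\bba_t)(\bbE'\bba_t)'$, where on every overspecified link $\bbQ_{\bbj_t}\bbX_{\bbj_*}=0$ lets me replace $\bbY$ by $\bbE$. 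By the matrix determinant lemma each log-determinant increment becomes $\log\bigl(1-(\bbE'\bba_t)'(n\widehat\bgS_{\bbj_t})^{-1}(\bbE'\bba_t)\bigr)$, so the whole problem reduces to the behaviour of the quadratic forms $\bbz_t'(n\widehat\bgS_{\bbj_t})^{-1}\bbz_t$ with $\bbz_t=\bbE'\bba_t$.

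For overspecification $\bbz_t$ is pure noise. Conditioning on $\bbX$ and rotating to an orthonormal basis of the column space of $\bbQ_{\bbj_t}$, the form $\bbz_t'(n\widehat\bgS_{\bbj_t})^{-1}\bbz_t$ becomes the squared length of the projection of a fixed unit vector onto the column space of an $(n-k_{\bbj_t})\times p$ matrix with i.i.d. entries, a Beta-type statistic concentrating at $p/(n-k_{\bbj_t})$. Hence adding a redundant index at current size $\ell$ changes the AIC by $n\log(1-\tfrac{p}{n-\ell})+2p$; approximating the chain sum by a Riemann integral from $0$ to $\alpha$ produces exactly $\phi(\alpha,c)$ as the coefficient of $n^2$. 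Because this increment is decreasing in $\ell$, the partial sums along any chain are unimodal, so the full model is the worst overspecified competitor and the sign of $\phi(\alpha,c)$ decides between no overspecification and almost-sure overspecification. For $C_p$ I would instead expand $(n-k)\tr\bigl(\widehat\bgS_{\bm\omega}^{-1}(\widehat\bgS_{\bbj}-\widehat\bgS_{\bbj_*})\bigr)$; since the reference $\widehat\bgS_{\bm\omega}^{-1}$ is fixed, RMT gives $\tfrac1p\tr\widehat\bgS_{\bm\omega}^{-1}\to(1-\alpha-c)^{-1}$ and each redundant index contributes the constant $n\psi(\alpha,c)$, so the sign of $\psi(\alpha,c)$ plays the same role. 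The BIC is handled directly: its penalty grows like $\log(n)\,p$ per index and dwarfs the $O(n)$ log-determinant terms, so it never overspecifies, while for $\bbj=\bbj_*\setminus\{i\}$ the penalty saving $-\log(n)p$ overwhelms the bounded signal gain and forces $B_{\bbj}<B_{\bbj_*}$, giving (2).

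For an underspecified $\bbj\in\bbJ_-$ with $m$ false and $s$ missing true indices, I would route through the intermediate model $\bbj\cup\{i_1,\dots,i_s\}\supseteq\bbj_*$, splitting $\cF_{\bbj}-\cF_{\bbj_*}$ into an overspecification-by-$m$ part and a signal part coming from adding back the $s$ true indices. On the signal links $\bbQ_{\bbj_t}\bbX_{\bbj_*}\neq0$, so the rank-one updates now carry the noncentral matrix $\Phi_{\bbj}$; collecting the determinant contributions reproduces $\log\tau_{n\bbj}=(s-p)\log(1-\alpha_m)+\log|(1-\alpha_m)\bbI+\Phi_{\bbj}|$ for the AIC/BIC functional, while for $C_p$ the trace functional collects $\tr(\widehat\bgS_{\bm\omega}^{-1}\Phi_{\bbj})\approx\tfrac{n}{n-k-p}\kappa_{n\bbj}$. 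Balancing the signal against the net noise deficit of $(s-m)$ indices---each worth the boundary rate $\log(1-c)+2c$ for AIC and $n\psi(\alpha,c)$ for $C_p$---yields the stated thresholds $\log(\tau_{\bbj})>(s-m)(\log(1-c)+2c)$ and $\kappa_{\bbj}>(s-m)\psi(\alpha,c)(1-\alpha-c)/(1-\alpha)$ (and the reversed inequalities for the failure modes), the strict inequality deciding whether $\bbj_*$ beats that model or is beaten by it.

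The hard part will be upgrading these limit computations to almost-sure statements holding simultaneously over the whole model family, whose cardinality is of order $2^k$ with $k\to\infty$. Two features make this tractable: the leading $n^2$-terms depend on a competitor only through its size and the invariants $\tau_{\bbj},\kappa_{\bbj}$, and the monotonicity of the per-index increments collapses the overspecified analysis to the full model. The residual task is to show the fluctuations are $o_{a.s}(1)$ after dividing by $n^2$, uniformly in $\bbj$, which is exactly where assumption (A2) and the RMT estimates of the Appendix are needed: the exact Beta and Wishart laws available under normality must be replaced by almost-sure RMT convergence of the pertinent log-determinants, resolvent traces, and extreme eigenvalues. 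A final technical point is that $\bba_t$ and the Wishart-type matrices $n\widehat\bgS_{\bbj_t}$, $n\widehat\bgS_{\bm\omega}$ are all built from the same $\bbE$; I would neutralise this dependence by the conditioning-and-rotation device above before invoking those estimates.
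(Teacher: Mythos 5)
Your proposal is correct and follows essentially the same route as the paper's proof: pairwise comparison of each criterion at $\bbj$ versus $\bbj_*$ (and versus the full model ${\bm \omega}$ for the over-specification failure modes), the rank-one chain decomposition \eqref{Pat}--\eqref{Qat} combined with the determinant lemma, concentration of the noise quadratic forms at $c_n/(1-\alpha_{m-t})$ summed into $\phi(\alpha,c)$ by a Riemann integral, routing each underspecified model through the intermediate overspecified model so that the signal links collect into $\log(\tau_{\bbj})$ (resp.\ $\kappa_{\bbj}$ for $C_p$), and the two-part argument for the BIC. The only differences are presentational: your conditioning-and-rotation/Beta heuristic and the unimodality remark correspond in the paper to the RMT quadratic-form Lemma \ref{mainle} (needed precisely because rotation does not preserve i.i.d.\ entries under (A2), as you yourself note) and to the concavity of $\phi(\cdot,c)$ used in the proof of Theorem \ref{th1}.
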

The proof of this theorem is  presented in Section \ref{proofths}. If assumption (A5) does not hold, we instead consider assumption (A5') and have the following theorem.
\begin{theorem}\label{th2}
	Suppose that assumptions {\rm (A1) through (A4)} and {\rm (A5')} hold.  
	\begin{itemize}
	\item[{\rm (1)}] If $\phi(\alpha,c)>0$, then the variable selection method based on the AIC is strongly consistent; otherwise,  if $\phi(\alpha,c)<0$, then  the variable selection method based on the AIC is almost surely over-specified.
  \item[{\rm (2)}]
  For any $\bbj\in \bbJ_-$ satisfying $k_{\bbj_+}=m\geq0$,        $k_{\bbj_*\cap\bbj_-^c}=s>0$ and $m-s<0$, if $\lim_{n,p}
  \Big(\log(\tau_{n\bbj})-c(s-m)\log(n)\Big)>(s-m)\log(1-c),$ then 
  the variable selection method based on the BIC is strongly consistent.
	If for some $\bbj\in \bbJ_-$ satisfying  $m-s<0$,  $\lim_{n,p}
  \Big(\log(\tau_{n\bbj})-c(s-m)\log(n)\Big)<(s-m)\log(1-c),$ then
  the variable selection method based on the BIC is almost surely under-specified.   
  \item[{\rm (3)}]  If $\psi(\alpha,c)>0$, then the variable selection method based on the $C_p$ is strongly consistent; otherwise,  if $\psi(\alpha,c)>0$, then the variable selection method based on the $C_p$ is almost surely over-specified.
\end{itemize}
\end{theorem}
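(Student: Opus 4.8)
The plan is to follow the same two–front strategy used for Theorem \ref{th1}: for each criterion I would show separately that (i) no strictly over‑specified model $\bbj\supsetneq\bbj_*$ beats $\bbj_*$ and (ii) no under‑specified model $\bbj\in\bbJ_-$ beats $\bbj_*$, where ``beats'' is read off the sign of the relevant criterion difference. The decisive observation is that the over‑specification analysis is completely insensitive to whether (A5) or (A5$'$) holds: for $\bbj\supset\bbj_*$ one has $\bbQ_{\bbj}\bbX_{\bbj_*}\Theta_{\bbj_*}=0$, so $\bbY'\bbQ_{\bbj}\bbY=\bbE'\bbQ_{\bbj}\bbE$ and the noncentrality matrix $\Phi$ never enters. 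Hence the over‑specification statements needed here --- the AIC over‑specifies iff $\phi(\ga,c)<0$, the $C_p$ over‑specifies iff $\psi(\ga,c)<0$, and the BIC never over‑specifies (its per‑variable penalty $p\log n$ dominates the order‑$n$ per‑variable reduction of $n\log|\widehat\bgS_{\bbj}|$) --- can be imported directly from the proof of Theorem \ref{th1} in Section \ref{proofths}. All the new work is on the under‑specified side, where (A5$'$) replaces (A5).

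For $\bbj\in\bbJ_-$ I would expand $n\widehat\bgS_{\bbj}=\bbY'\bbQ_{\bbj}\bbY$ as $n\Phi_{\bbj}+\bbE'\bbQ_{\bbj}\bbE$ plus the cross terms $\Theta_{\bbj_*}'\bbX_{\bbj_*}'\bbQ_{\bbj}\bbE$ and its transpose. Walking along the nested chain $\bbj_0\supset\bbj_{-1}\supset\cdots$ relating $\bbj$ to $\bbj_*$ and using the rank‑one updates \eqref{Pat}--\eqref{Qat}, together with the Marchenko--Pastur almost‑sure convergence of $\log|\tfrac1n\bbE'\bbQ_{\bbj}\bbE|$ and almost‑sure bounds on its extreme eigenvalues, the target is the expansion
\begin{align*}
  \log\frac{|\widehat\bgS_{\bbj}|}{|\widehat\bgS_{\bbj_*}|}=\log\tau_{n\bbj}+(m-s)\log(1-c)+o(1)\quad\text{almost surely,}
\end{align*}
uniformly over $\bbj\in\bbJ_-$, with $m=k_{\bbj_+}$ and $s=k_{\bbj_*\cap\bbj_-^c}$. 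Since $\Phi_{\bbj}$ has rank at most $s$, one checks $\log\tau_{n\bbj}=\sum_{i=1}^{s}\log\!\big((1-\alpha_m)+\lambda_i(\Phi_{\bbj})\big)$, so under (A5$'$) the divergence $\|\Phi_{\bbj}\|\to\infty$ forces $\log\tau_{n\bbj}\to\infty$; for $C_p$ the analogous expansion features $\kappa_{n\bbj}=\tr(\Phi_{\bbj})$ in place of $\log\tau_{n\bbj}$, which likewise diverges.

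The three conclusions then follow by substituting this expansion into the criterion differences. For the AIC, $A_{\bbj}-A_{\bbj_*}=n\log(|\widehat\bgS_{\bbj}|/|\widehat\bgS_{\bbj_*}|)+2p(m-s)$; because $s\le k_*$ is fixed, both $2p(m-s)$ and $n(m-s)\log(1-c)$ are only of order $n$, whereas $n\log\tau_{n\bbj}\to+\infty$, so every under‑specified model is rejected and consistency is governed solely by the over‑specification threshold $\phi(\ga,c)>0$; the $C_p$ case is identical with $\kappa_{n\bbj}$ and $\psi(\ga,c)$. For the BIC the penalty is larger, $B_{\bbj}-B_{\bbj_*}=n\log(|\widehat\bgS_{\bbj}|/|\widehat\bgS_{\bbj_*}|)+\log(n)\,p\,(m-s)$, so dividing by $n$ the relevant sign is that of $\log\tau_{n\bbj}-c(s-m)\log n-(s-m)\log(1-c)+o(1)$. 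Under‑specified models with $m-s\ge0$ are rejected automatically (both penalty and signal push the difference up), which is exactly why the delicate condition need only be imposed on the family with $m-s<0$; this reproduces the stated criterion $\lim_{n,p}(\log\tau_{n\bbj}-c(s-m)\log n)$ versus $(s-m)\log(1-c)$, with consistency when it exceeds and under‑specification when it falls short.

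The hardest step is the uniform expansion of $\log(|\widehat\bgS_{\bbj}|/|\widehat\bgS_{\bbj_*}|)$ in the diverging‑signal regime. This is a spiked‑model problem of Baik--Ben Arous--P\'ech\'e type: the $s$ nonzero eigenvalues of $\Phi_{\bbj}$ are spikes sitting on the Marchenko--Pastur bulk of $\tfrac1n\bbE'\bbQ_{\bbj}\bbE$, and I must locate them and control the $o(1)$ remainder uniformly over all $\bbj\in\bbJ_-$ even as the spikes tend to infinity, while simultaneously taming the cross terms $\Theta_{\bbj_*}'\bbX_{\bbj_*}'\bbQ_{\bbj}\bbE$. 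Upgrading the pointwise almost‑sure limits to hold simultaneously over the up to $2^k$ subsets --- via eigenvalue bounds away from the bulk edges, concentration of the quadratic forms $\bba_t'\bbE(\bbE'\bbQ_{\bbj}\bbE)^{-1}\bbE'\bba_t$, and a Borel--Cantelli argument --- is where the finite‑fourth‑moment hypothesis (A2) and the range restrictions (A4) are consumed. For the BIC one additionally needs the remainder to be genuinely $o(1)$ rather than $o(\log n)$, so that it cannot perturb the sharp balance between $\log\tau_{n\bbj}$ and $c(s-m)\log n$; establishing this sharp error control for diverging spikes is the technical crux that distinguishes Theorem \ref{th2} from Theorem \ref{th1}.
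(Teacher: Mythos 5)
Your two-front strategy is the same as the paper's: its proof of Theorem \ref{th2} is essentially a rerun of the proof of Theorem \ref{th1}, importing the over-specified analysis (which never involves $\Phi$) and handling the under-specified side via Lemma \ref{le3} and \eqref{Btau}, with $\tau_{n\bbj}$ and $\kappa_{n\bbj}$ now diverging under (A5'); your BIC bookkeeping for the $m<s$ models is also correct. The genuine gap is your key expansion: the claim $\log\bigl(|\widehat\bgS_{\bbj}|/|\widehat\bgS_{\bbj_*}|\bigr)=\log\tau_{n\bbj}+(m-s)\log(1-c)+o(1)$ a.s.\ uniformly over $\bbj\in\bbJ_-$ is false whenever $m=k_{\bbj_+}$ grows with $n$, and $m$ can be as large as $k-k_*\sim\ga n$. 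The correct bulk term (Lemma \ref{le3}) is $\sum_{t=1}^m\log\bigl(\tfrac{1-\alpha_{m+s-t}-c_n}{1-\alpha_{m+s-t}}\bigr)-s\log(1-\alpha_m-c_n)$ with error $o_{a.s.}(m)$, not $o(1)$; since $(1-\alpha-c)/(1-\alpha)<1-c$ for $\alpha>0$, this is more negative than $(m-s)\log(1-c)$ by an amount of order $m$. Consequently your order count --- ``both $2p(m-s)$ and $n(m-s)\log(1-c)$ are only of order $n$'' --- fails for $m\asymp n$ (both are of order $n^2$), and the conclusion you hang on it, that under-specified models are rejected \emph{solely} because $n\log\tau_{n\bbj}\to+\infty$, is not established: (A5') supplies no rate of divergence for $\log\tau_{n\bbj}$, so it cannot be assumed to dominate terms of order $nm$.

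The repair is what the paper's Theorem \ref{th1} argument already supplies: for $\bbj\in\bbJ_-$ split $\frac1n(A_{\bbj}-A_{\bbj_*})$ into the order-$m$ ``bulk plus penalty'' part $\sum_{t=1}^m[\log(\cdot)+2c_n]$, which is asymptotically nonnegative precisely because $\phi(\ga,c)>0$ --- so that hypothesis is consumed on the under-specified side as well, contrary to your claim that the signal alone governs it --- and a bounded, $s$-dependent remainder, which is the only place where $\log\tau_{n\bbj}\to\infty$ is needed; the $C_p$ case is parallel with $\psi(\ga,c)$ and $\kappa_{n\bbj}$, and for the BIC the penalty $(m-s)c_n\log n$ dominates the order-$m$ bulk whenever $m>s$. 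One further remark: your instinct that diverging signals are the technical crux has merit, but not for the spike-location reason you give. The paper's Lemmas \ref{le2}--\ref{le3} are proved under (A5) with \emph{additive} $o_{a.s.}(1)$ errors; under (A5') the quantities $\delta_t$ (whose product over the removed indices equals exactly $\tau_{n\bbj}^{-1}$) tend to zero, and additive control of $I_t(0)+(1-\alpha_{m-t}-c_n)\delta_t$ no longer yields $\log(-I_t(0))=\log\delta_t+\log(1-\alpha_{m-t}-c_n)+o(1)$. Making the paper's ``same proof procedure'' rigorous under (A5') requires relative (multiplicative) error control of these quadratic forms; that, rather than Baik--Ben Arous--P\'ech\'e spike asymptotics, is the sharp estimate your final paragraph should target, especially for the delicate BIC balance in part (2).
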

The proof of this theorem is also presented in Section \ref{proofths}.
\begin{remark}
	In the two theorems,  $\alpha_k$ and $c_n$, respectively, are typically used instead of $\alpha$ and $c$ for application, because for a real dataset, we do not have information regarding their limits. 
\end{remark}
\begin{remark}
	3D plots are presented in Figure \ref{fig1} to illustrate the ranges of $\alpha$ and $c$ such that $\phi(\alpha,c)>0$ and  $\psi(\alpha,c)>0 $. This figure shows that large $\alpha$ and $c$ both result in overestimation of the true model. Moreover, \cite{FujikoshiS14C,YanagiharaW15C} proved that for the fixed-$k$ case, the consistency ranges of  $c$ for the AIC and $C_p$ are $[0, 0.797)$ and $[0,1/2)$, respectively, which coincide with our results in Lemma \ref{le1} when $\alpha_k\to0$. 
\end{remark}
\begin{remark}
	Combining Theorems \ref{th1} and \ref{th2}, under an LLL asymptotic framework,  if the BIC is strongly consistent, then the AIC is strongly consistent but not vice versa. This result contradicts the classical understanding that under a large-sample asymptotic framework, the AIC and $C_p$ are not consistent, but the BIC is strongly consistent. 
	\end{remark}
\begin{figure}[h]
	\subfigure{
		\includegraphics[width=6cm,height=5cm]{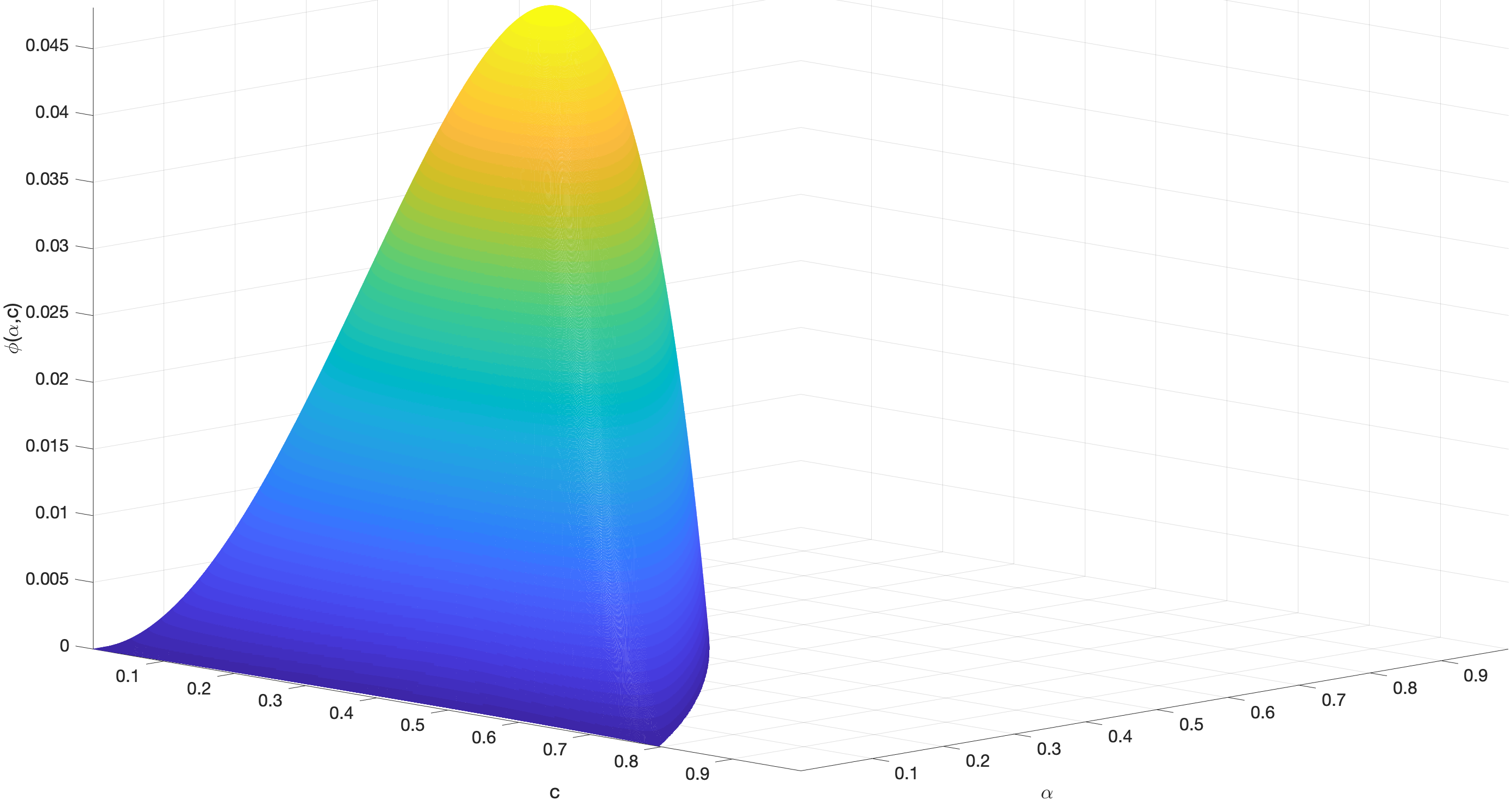}}
\subfigure{
		\includegraphics[width=6cm,height=5cm]{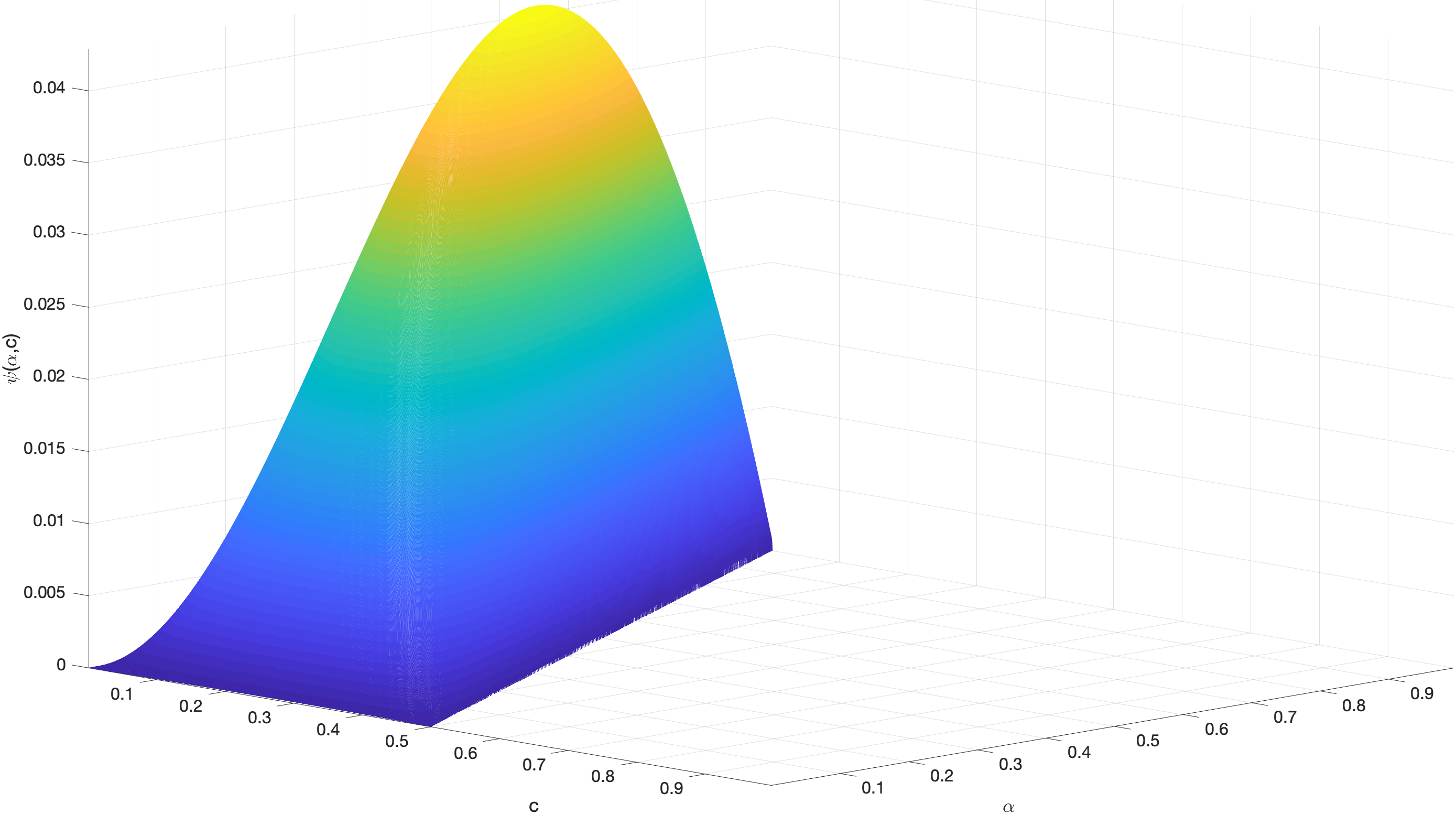}}
		\subfigure{
			\includegraphics[width=6cm,height=2.5cm]{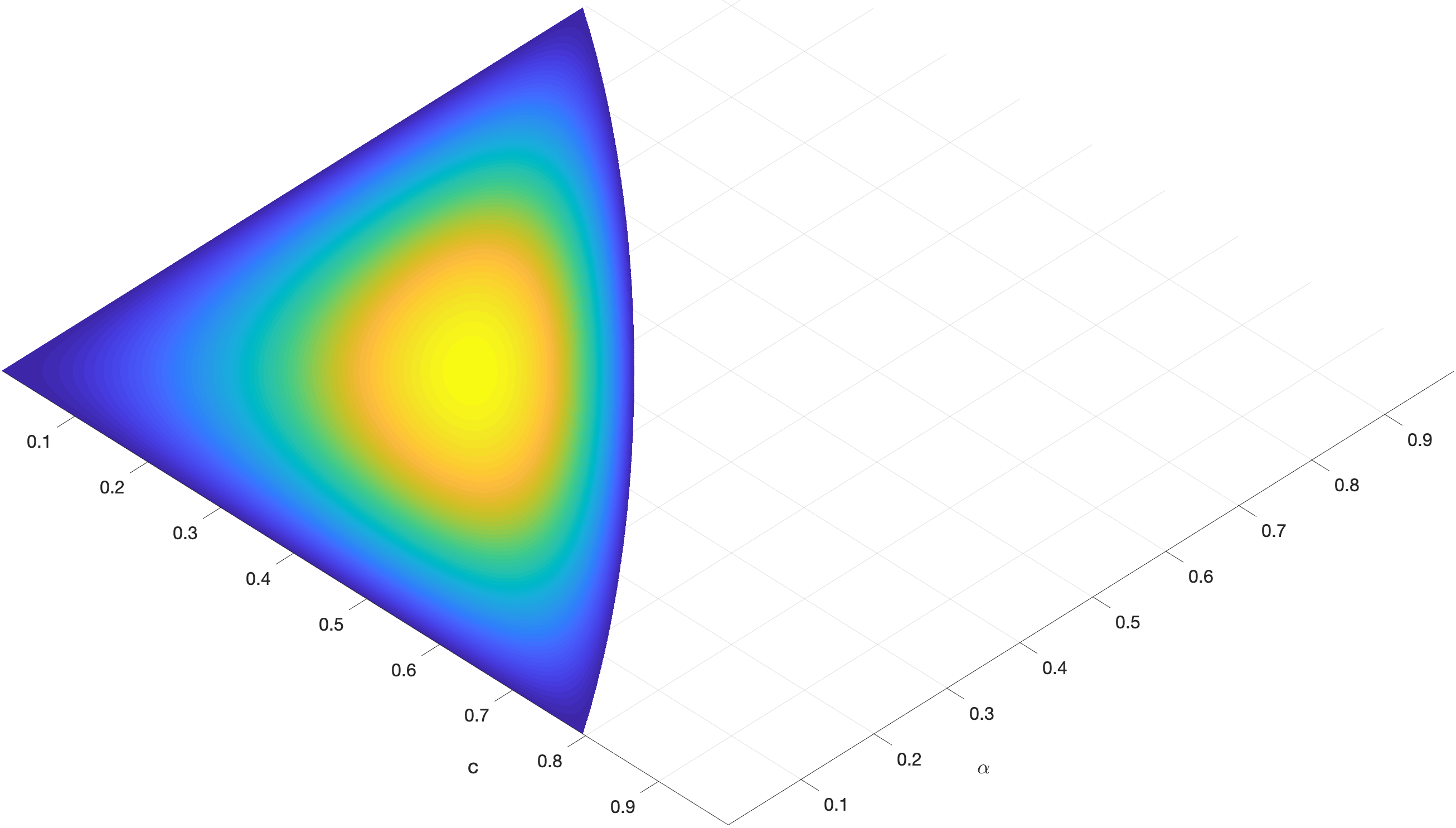}}
	\subfigure{
			\includegraphics[width=6cm,height=2.5cm]{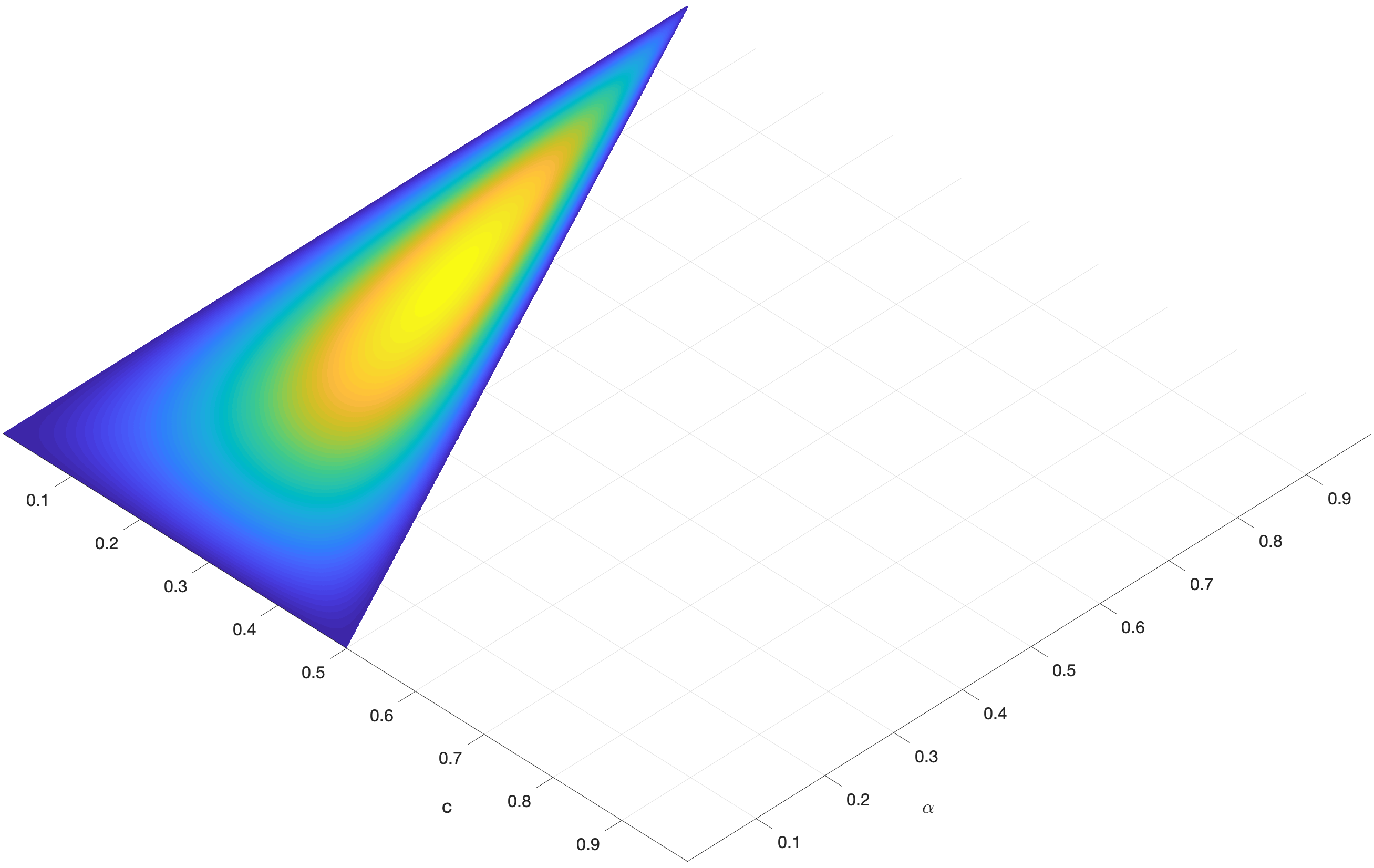}}
			\caption{3D plots for $\phi(\alpha,c)>0$ and $\psi(\alpha,c)>0$. The left two figures are a wireframe mesh and a contour plot for $\phi(\alpha,c)>0$. The right two figures are  a wireframe mesh and a contour plot for $\psi(\alpha,c)>0$.}
			\label{fig1} 
\end{figure}

\subsection{KOO methods based on the AIC, BIC, and $C_p$}\label{KOO}
The AIC, BIC, and $C_p$ become computationally complex as $k$ becomes large because we must compute a minimum of $2^k-1$ statistics. An alternate procedure, which was introduced by \cite{ZhaoK86D}  and \cite{NishiiB88S} and implemented by \cite{FujikoshiS18C}, is available to avoid this problem.
  In the following, we examine the performance of this procedure under an LLL framework. Denote 
\begin{align*}
  \tilde{A}_{j}&:=\frac1{n}(A_{{\bm \omega} \backslash j}-A_{{\bm \omega}})=\log(|\widehat \bgS_{{\bm \omega} \backslash j}| )-\log(|\widehat \bgS_{{\bm \omega}}|) -2c_n,\\
    \tilde{B}_{j}&:=\frac1{n}(B_{{\bm \omega} \backslash j}-B_{{\bm \omega}})=\log(|\widehat \bgS_{{\bm \omega} \backslash j}|) -\log(|\widehat \bgS_{{\bm \omega}}|) -\log(n)c_n,\\
      \tilde{C}_{j}&:=\frac1{n}(C_{{\bm \omega} \backslash j}-C_{{\bm \omega}})=(1-\alpha_k)\rtr(\widehat\bgS_{{\bm \omega}}^{-1}\widehat\bgS_{{\bm \omega} \backslash j})-(n-k+2)c_n.
\end{align*}
Choose the model
\begin{gather*}
  \tilde \bbj_A=\{j\in{\bm \omega}|\tilde{A}_{j}>0\},~~~
    \tilde \bbj_B=\{j\in{\bm \omega}|\tilde{B}_{j}>0\}\\
      \tilde \bbj_C=\{j\in{\bm \omega}|\tilde{C}_{j}>0\}.
\end{gather*}
These methods are based on the comparison of two models, models $M_{\bm\omega \backslash j}$ and $M_{
\bm\omega}$; therefore, selection methods
 $\tilde{\bbj}_A$,  $\tilde{\bbj}_B$ and $ \tilde{\bbj}_C$
are referred to as kick-one-out (KOO) methods based on the AIC, BIC and $C_p$, respectively.

Note that the $-2 \log$ likelihood  ratio  statistic for testing
$\bm\theta_j=\bf 0$ under normality can be expressed as
$$
n\left\{ \log( |\hat{\Sigma}_{\bm\omega}|)-\log (|\hat{\Sigma}_{\bm\omega\slash j}|)\right\}.
$$
Similarly, $(n-k)\tr (\hat{\Sigma}_{
\bm\omega}^{-1}\hat{\Sigma}_{\bm\omega\backslash j})$
is the Lawley-Hotelling trace statistic for testing $\bm\theta_j=\bf 0$. Here, $\tilde{A}_j$ $(\tilde{B}_j, ~\tilde{C}_j)$ is regarded as a measure that expresses the degree of contribution of $\bf x_j$ based on $A_j$ $ (B_j,~ C_p)$. As such, the KOO methods may also be referred to as test-based methods, as in Fujikoshi and Sakurai (2018).
Therefore, we have the following theorem for the KOO methods.
\begin{theorem}\label{newthm} 
	Suppose assumptions {\rm (A1) through (A5)} hold.
	\begin{itemize}
		\item[{\rm (1)}] If  for any $j\in \bbj_{*}$, $\log(\tau_{{\bm \omega}\backslash j})>\log(1-\alpha-c)+2c$, then
		\begin{itemize}
		\item if $\log(\frac{1-\alpha}{1-\alpha-c})<2c$, the KOO method based on the AIC is strongly consistent;
		\item if $\log(\frac{1-\alpha}{1-\alpha-c})>2c$, the KOO method based on the AIC is almost surely overspecified.  
			\end{itemize}
		 If  for some $j\in \bbj_{*}$, $\log(\tau_{{\bm \omega}\backslash j})<\log(1-\alpha-c)+2c$, then the KOO method based on the AIC is almost surely underspecified.  
		  \item[{\rm (2)}] The KOO method based on the BIC is almost surely underspecified. 
		  
		   \item[{\rm (3)}] If  for any $j\in \bbj_{*}$, $\kappa_{{\bm \omega}\backslash j}>\frac{c(1-\alpha-2c)}{1-\alpha}$, then 
  \begin{itemize}
  \item if $(1-\alpha)<2(1-\alpha-c)$, the KOO method based on the $C_p$  is strongly consistent.
  \item if $(1-\alpha)>2(1-\alpha-c)$, the KOO method based on $C_p$ is almost surely overspecified. 
  \end{itemize}
   If  for some $j\in \bbj_{*}$,  $\kappa_{{\bm \omega}\backslash j}<\frac{c(1-\alpha-2c)}{1-\alpha}$, then the KOO method based on the $C_p$ is almost surely underspecified.  
  \end{itemize}
  Suppose that assumptions {\rm (A1) through (A4)} and {\rm (A5')} hold.  
	\begin{itemize}
	\item[{\rm (4)}] If $\log(\frac{1-\alpha}{1-\alpha-c})<2c$, the KOO method based on the AIC is strongly consistent. Otherwise, if $\log(\frac{1-\alpha}{1-\alpha-c})>2c$, the KOO method based on the AIC is almost surely overspecified. 
  \item[{\rm (5)}] If  for any $j\in \bbj_{*}$, $\lim_{p,n}[\log(\tau_{n{\bm \omega}\backslash j})-\log(n)c]>\log(1-\alpha-c),$ then 
		\begin{itemize}
		\item if $\log(\frac{1-\alpha}{1-\alpha-c})<2c$, the KOO method based on the BIC is strongly consistent;
		\item if $\log(\frac{1-\alpha}{1-\alpha-c})>2c$, the KOO method based on the BIC is almost surely overspecified.  
			\end{itemize}
			If  for some $j\in \bbj_{*}$, $\lim_{p,n}[\log(\tau_{n{\bm \omega}\backslash j})-\log(n)c]<\log(1-\alpha-c),$ then the KOO method based on the BIC is almost surely underspecified.  
  \item[{\rm (6)}]  If $(1-\alpha)<2(1-\alpha-c)$, then the KOO method  based on the $C_p$ is strongly consistent. Otherwise, if $(1-\alpha)>2(1-\alpha-c)$, the KOO method based on the AIC is almost surely overspecified. 
\end{itemize}
\end{theorem}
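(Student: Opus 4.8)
The plan is to reduce every kick-one-out statistic to a single rank-one perturbation of the full-model residual matrix and then read off its almost-sure limit by random matrix theory. Fix $j\in{\bm \omega}$ and set $\bba_j=\bbQ_{{\bm \omega}\backslash j}\bbx_j/\|\bbQ_{{\bm \omega}\backslash j}\bbx_j\|$, $\bbz_j=\bbY'\bba_j$ and $\bbW=n\widehat\bgS_{{\bm \omega}}=\bbE'\bbQ_{{\bm \omega}}\bbE$. Using (\ref{Qat}), $\bbQ_{{\bm \omega}\backslash j}=\bbQ_{{\bm \omega}}+\bba_j\bba_j'$, so $n\widehat\bgS_{{\bm \omega}\backslash j}=\bbW+\bbz_j\bbz_j'$. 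The matrix determinant lemma then gives
\begin{align*}
\tilde A_j=\log\big(1+\bbz_j'\bbW^{-1}\bbz_j\big)-2c_n,\qquad \tilde B_j=\log\big(1+\bbz_j'\bbW^{-1}\bbz_j\big)-\log(n)c_n,
\end{align*}
and directly $\tr(\widehat\bgS_{{\bm \omega}}^{-1}\widehat\bgS_{{\bm \omega}\backslash j})=p+\bbz_j'\bbW^{-1}\bbz_j$, whence $\tilde C_j=(1-\alpha_k)\bbz_j'\bbW^{-1}\bbz_j-2c_n$. Thus every case of the theorem is governed by the single scalar $\bbz_j'\bbW^{-1}\bbz_j$, and consistency amounts to showing $\min_{j\in\bbj_*}\tilde A_j>0$ while $\max_{j\in{\bm \omega}\backslash\bbj_*}\tilde A_j<0$ almost surely, and likewise for $\tilde B_j,\tilde C_j$.

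The next step is to compute the limit of $\bbz_j'\bbW^{-1}\bbz_j$ in the two regimes. Because $\bba_j$ lies in the range of $\bbX_{{\bm \omega}}$ and is orthogonal to the range of $\bbQ_{{\bm \omega}}$, the noise part $\bba_j'\bbE$ decouples from $\bbW$ and has asymptotically identity covariance. For $j\in{\bm \omega}\backslash\bbj_*$ we have $\bba_j'\bbX_{\bbj_*}=0$, so $\bbz_j=\bba_j'\bbE$ is pure noise; using that $\frac1{n-k}\bbW$ obeys the Marchenko--Pastur law with ratio $p/(n-k)\to c/(1-\alpha)$ yields $\bbz_j'\bbW^{-1}\bbz_j=\tr(\bbW^{-1})+o_{a.s}(1)$ with $\tr(\bbW^{-1})\to c/(1-\alpha-c)$, so $\tilde A_j\to\log\frac{1-\alpha}{1-\alpha-c}-2c$ and $\tilde C_j\to -c(1-\alpha-2c)/(1-\alpha-c)$. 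Comparing these limits with zero produces exactly the comparisons of $\log\frac{1-\alpha}{1-\alpha-c}$ with $2c$ and of $(1-\alpha)$ with $2(1-\alpha-c)$ that separate correct exclusion from overspecification in parts (1), (3), (4) and (6). For $j\in\bbj_*$, $\bbz_j=\bbmu_j+\bba_j'\bbE$ with signal mean $\bbmu_j=\|\bbQ_{{\bm \omega}\backslash j}\bbx_j\|\bm\theta_j$; since $\Phi_{{\bm \omega}\backslash j}$ is rank one, $\|\bbmu_j\|^2=n\kappa_{n{\bm \omega}\backslash j}$ and $\tau_{{\bm \omega}\backslash j}=(1-\alpha)+\kappa_{{\bm \omega}\backslash j}$. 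The deterministic equivalent $\bbmu_j'\bbW^{-1}\bbmu_j\to\kappa_{{\bm \omega}\backslash j}/(1-\alpha-c)$ then gives $1+\bbz_j'\bbW^{-1}\bbz_j\to\tau_{{\bm \omega}\backslash j}/(1-\alpha-c)$, so that $\tilde A_j\to\log\tau_{{\bm \omega}\backslash j}-\log(1-\alpha-c)-2c$ and $\tilde C_j\to(1-\alpha)(c+\kappa_{{\bm \omega}\backslash j})/(1-\alpha-c)-2c$; these reproduce the true-variable retention thresholds $\log\tau_{{\bm \omega}\backslash j}>\log(1-\alpha-c)+2c$ and $\kappa_{{\bm \omega}\backslash j}>c(1-\alpha-2c)/(1-\alpha)$.

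The BIC parts follow the same reduction but are driven by the divergent penalty $\log(n)c_n$. Under (A5), $\tau_{{\bm \omega}\backslash j}$ and hence $\log(1+\bbz_j'\bbW^{-1}\bbz_j)$ stay bounded, so even for $j\in\bbj_*$ one has $\tilde B_j\to-\infty$: every variable is kicked out and the method is almost surely underspecified, which is part (2). Under (A5'), $\kappa_{n{\bm \omega}\backslash j}\to\infty$, so for a true variable $\log(1+\bbz_j'\bbW^{-1}\bbz_j)=\log\tau_{n{\bm \omega}\backslash j}-\log(1-\alpha-c)+o_{a.s}(1)$, and retention is decided by the sign of $\lim[\log\tau_{n{\bm \omega}\backslash j}-\log(n)c]-\log(1-\alpha-c)$, while noise variables retain a bounded numerator against the diverging penalty; this is exactly the content of part (5).

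The main obstacle is uniformity over the growing index set: since $k\to\infty$, the almost-sure separation requires controlling the minimum of $\tilde A_j$ over $j\in\bbj_*$ (only $k_*$ fixed terms, hence easy) together with the maximum over the $\sim k$ noise statistics simultaneously, so pointwise almost-sure limits do not suffice. I would therefore establish a uniform large-deviation bound showing $\max_{j\in{\bm \omega}\backslash\bbj_*}\big|\bbz_j'\bbW^{-1}\bbz_j-c/(1-\alpha-c)\big|=o_{a.s}(1)$, using assumption (A2) to control the quadratic forms together with an almost-sure lower bound on the smallest eigenvalue of $\frac1{n-k}\bbW$ away from the left Marchenko--Pastur edge $(1-\sqrt{c/(1-\alpha)})^2$, which keeps $\|\bbW^{-1}\|$ bounded. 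This uniform concentration estimate, combined with the quadratic-form and bilinear-form bounds collected in the Appendix, upgrades the pointwise limits above to the required simultaneous almost-sure inequalities and completes the proof.
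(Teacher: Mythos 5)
Your proposal is correct in substance: it arrives at exactly the limiting values \eqref{tildeA1}--\eqref{tildeA2} that drive the paper's proof, but by a leaner route. The paper treats each KOO statistic as a difference of criteria relative to the true model, $A_{{\bm \omega}\backslash j}-A_{{\bm \omega}}=(A_{{\bm \omega}\backslash j}-A_{\bbj_*})-(A_{{\bm \omega}}-A_{\bbj_*})$, and invokes its general Lemmas \ref{le1} and \ref{le3}; for $j\in\bbj_*$ this routes through Lemma \ref{le2}, whose proof must invert the signal-contaminated matrix $\bbY'\bbQ_{\bbj_{-t}}\bbY$ and therefore needs the $\bbB_1,\bbB_2$ block decomposition and the $I_t(z)$ computation. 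You instead perturb off the full model: $n\widehat\bgS_{{\bm \omega}\backslash j}=\bbW+\bbz_j\bbz_j'$ with $\bbW=\bbE'\bbQ_{{\bm \omega}}\bbE$ signal-free and common to all $j$, so every case of the theorem reduces to the single scalar $\bbz_j'\bbW^{-1}\bbz_j$, and the only random matrix input needed is the isotropic behavior of $\bbW^{-1}$ --- precisely the three assertions of Lemma \ref{mainle} specialized to $\bbQ_{{\bm \omega}}$, where the second term of \eqref{le3.1.3} vanishes because $\bbQ_{{\bm \omega}}\bba_j=0$. What this buys is that Lemma \ref{le2}'s machinery is bypassed entirely for this theorem, and the signal enters only through the explicit rank-one identities $\|\bbmu_j\|^2=n\kappa_{n{\bm \omega}\backslash j}$ and $\tau_{{\bm \omega}\backslash j}=(1-\alpha)+\kappa_{{\bm \omega}\backslash j}$ (the same rank-one fact the paper uses in the proof of Theorem \ref{gen_kooth}). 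Two caveats. First, your claim that $\bba_j'\bbE$ ``decouples'' from $\bbW$ is literally true only for Gaussian errors; under (A2) it is exactly what Lemma \ref{mainle} proves, so your appeal to the Appendix-type quadratic-form bounds is necessary, not optional. Second, your explicit handling of uniformity over the $\sim k$ noise indices is a genuine improvement on the paper, whose proof records only the pointwise limits; note that the fourth-moment bound $O(p^{-2})$ established in the Appendix is not by itself summable over $k\sim n$ indices, so one does need the truncation $|e_{ij}|\le C$ together with higher-order Burkholder bounds, exactly as you propose.

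One mismatch worth flagging: your limits show that under (A5') every noise statistic satisfies $\tilde{B}_j\to-\infty$, so the KOO method based on the BIC can never be overspecified; consequently the second bullet of part (5) (overspecification when $\log(\frac{1-\alpha}{1-\alpha-c})>2c$) can never occur, and your argument proves the cleaner statement rather than ``exactly the content of part (5).'' This is a defect in the theorem's wording rather than in your proof --- the paper's own limits \eqref{tildeA1} lead to the same conclusion --- but you should not assert agreement with a case your computation shows to be vacuous.
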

The proof of this theorem is presented in Section \ref{proofths}.
\begin{remark}
When the dimension $p$ and model size $k$ are fixed but the sample size $n\to\infty$, the asymptotic performance of the KOO methods and the classical AIC and BIC procedures are the same, as described by \cite{NishiiB88S}. However, according to Theorems \ref{th1} and  \ref{newthm}, 
when $p$ and $k$ are large, the conditions for the KOO methods based on the AIC, BIC, and $C_p$ are  stronger than those based on the classical AIC, BIC, and $C_p$. The reason is that   the KOO methods are compared with the full model, whereas the classical AIC, BIC, and $C_p$ are compared with the true model. When the full model size is large and the true model size is small, the methods have different properties.

\end{remark}
\subsection{General KOO methods}\label{GKOO}
In the classical AIC, BIC, and $C_p$, including the KOO methods based on the AIC, BIC, and $C_p$, the penalty terms are important and modified by many researchers. For the classical information criteria under a large-sample asymptotic framework, \cite{NishiiB88S} proved that the strong consistency must be on the order of the penalty  larger than $O(\log\log n)$ and smaller than $O(n)$, which coincides with the fact that the AIC is not consistent and the BIC is strongly consistent. 
However, on the basis of the above results, under an LLL framework, a large penalty may cause incorrect selection  (actually, a constant penalty is sufficient for strong consistency), and the ranges of $\alpha$ and $c$ may be crucial for the consistency.  Thus, we
 consider a new criterion that is independent of the penalty and
 reduces the conditions for $\alpha$ and $c$. Therefore, in this subsection, we propose two general KOO methods based on the likelihood  ratio  statistic and the Lawley-Hotelling trace statistic.

From  the  proof of Theorem \ref{newthm}, we find  
that the differences in the limits of $\tilde{A}_{j}$ for $j$ that exist and do not exist in the true model $\bbj_*$  are the two terms $\log(\tau_{{\bm \omega}\backslash j})$ and $\log(1-\alpha)$ (see \eqref{tildeA1} and \eqref{tildeA2}). That means if  $\min_{j\in\bbj_*}\{\tau_{{\bm \omega}\backslash j}\}>1-\alpha$, for large enough $\{k,p,n\}$, the $k$ values of $\tilde{A}_{j}$ should be separated by a critical point and all the $\{\tilde{A}_{j}|j\in\bbj_*\}$ are the outliers because $k_*$ is fixed. The $k$ values of $\tilde{C}_{j}$ are analogous.
We provide a numerical example in  Figure \ref{koo_fig_1} to illustrate the performances of  $\{\tilde{A}_{j}\}$ and $\{\tilde{C}_{j}\}$ more clearly. In this case, the use of $Z_1$  and $Z_2$ as the critical points is not reasonable apparently. 
\begin{figure}[htbp!]
\subfigure[Histogram of $\breve{A}_{j}$]{
		\includegraphics[width=6cm,height=3.5cm]{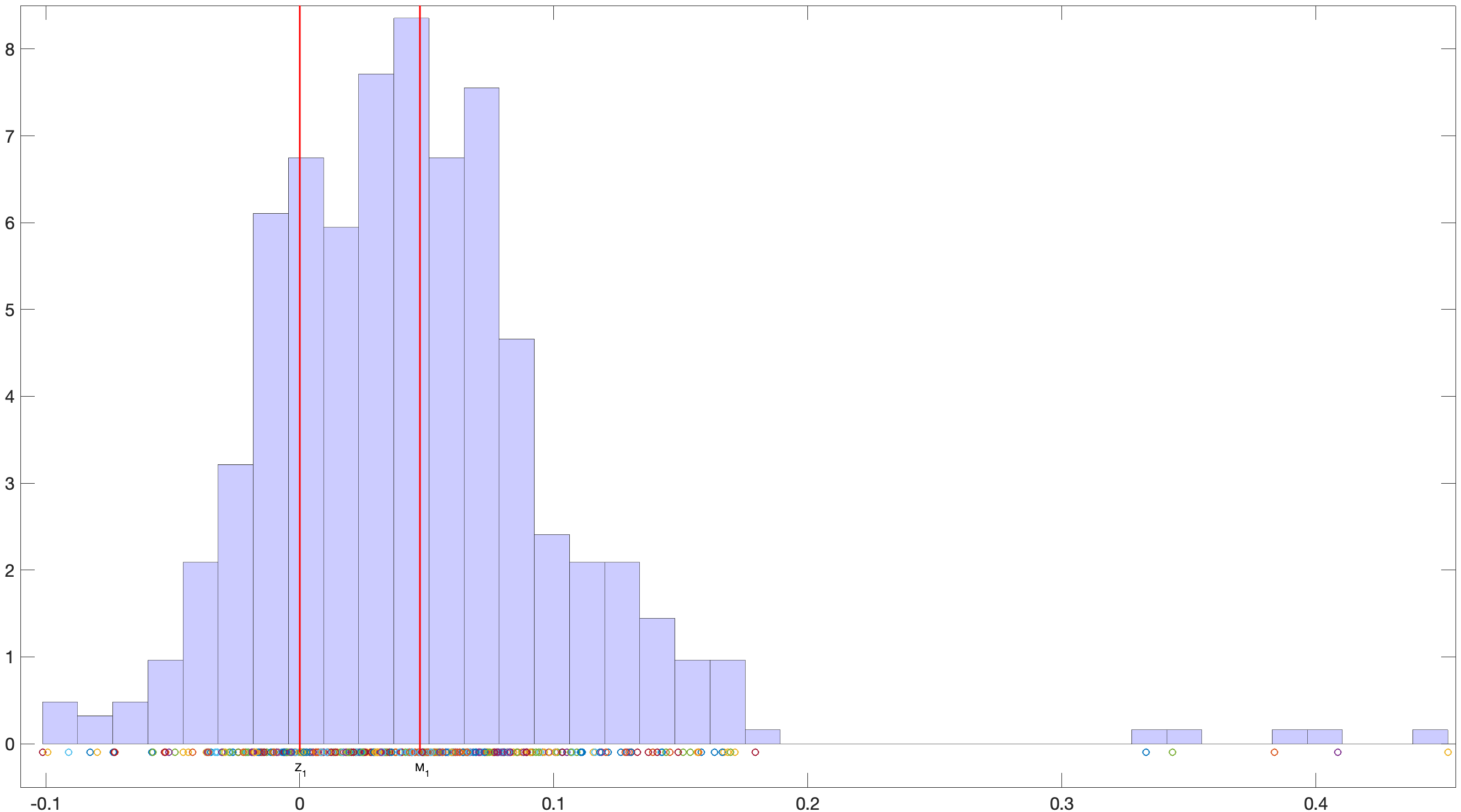}}
		\subfigure[Histogram of $\breve{C}_{j}$]{
		\includegraphics[width=6cm,height=3.5cm]{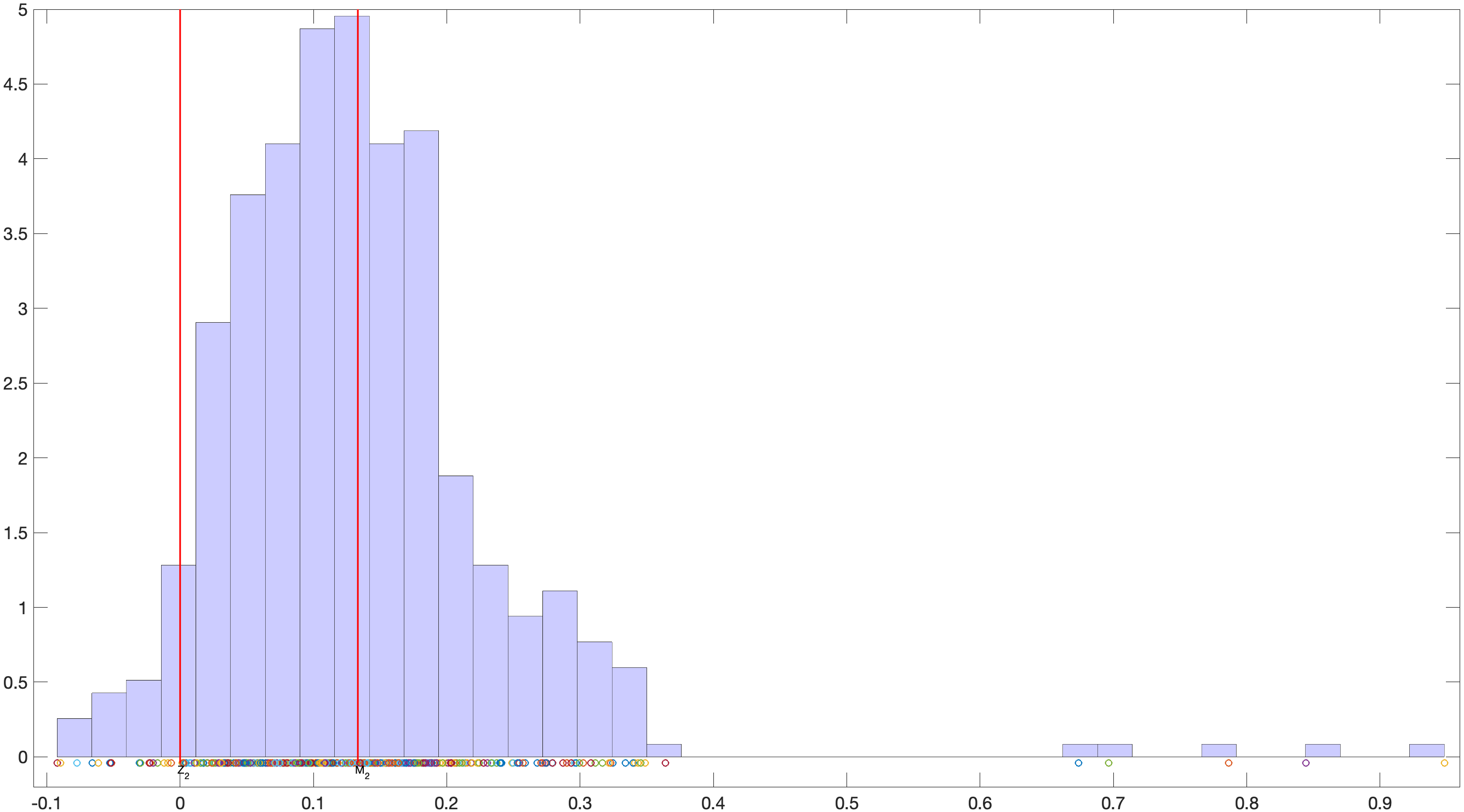}}
		\caption{We chose a Gaussian sample with $p=600$, $n=1500$, $k=450$ and $k_*=5$.  Hence, $c=0.4$ and $\alpha=0.3$.  The  histograms represent  the distributions of the $k$ values of $\breve{A}_{j}$ and $\breve{C}_{j}$ respectively. Here   $Z_1=0$ and $Z_2=0$ represent the critical points of the KOO method  based on AIC and $C_p$ respectively.    $M_1=\log((1-\alpha)/(1-\alpha-c))$ and $M_2=p+c/(1-\alpha-c)$ are the limits of $\breve{A}_{j}$ and $\breve{C}_{j}$ respectively when  $j$ does not lie in the true model.}
		\label{koo_fig_1}  
\end{figure}
Thus, on the basis of these properties, we propose the following methods which are called the general KOO methods in the sequel.  Denote  
\begin{align*}
\breve{A}_{j}:=\log(|\widehat \bgS_{{\bm \omega} \backslash j}|) -\log(|\widehat \bgS_{{\bm \omega}}|) ~~\mbox{and}~~\breve{C}_{j}:=\tr(\widehat \bgS_{{\bm \omega} \backslash j}\widehat \bgS_{{\bm \omega}}^{-1}),
\end{align*}
and  a fixed value  $\vartheta\in(0,\min_{j\in\bbj_*}\{\kappa_{{\bm \omega}\backslash j}\})$, 
choose the model
\begin{gather*}
  \breve \bbj_A=\{j\in{\bm \omega}|\breve{A}_{j}>\log(\frac{1-\alpha+\vartheta}{1-\alpha-c})\},~~~
      \breve \bbj_C=\{j\in{\bm \omega}|\breve{C}_{j}>\frac{\vartheta+c}{1-\alpha-c}+p\}.
\end{gather*}
Then, we have the following theorem.
\begin{theorem}\label{gen_kooth}
	Suppose assumptions {\rm (A1) through (A4)} hold and that for any $j\in \bbj_{*}$, $\kappa_{{\bm \omega} \backslash j}>0$. 	
	Then, for any fixed value  $\vartheta\in(0,\min_{j\in\bbj_*}\{\kappa_{{\bm \omega}\backslash j}\})$, the general KOO methods are strongly consistent, i.e.,
	\begin{align*}
    \lim_{n,p\to\infty}\breve{\bbj}_A\asto\bbj_*~~~\mbox{and} ~~~\lim_{n,p\to\infty}\breve{\bbj}_C\asto\bbj_*.
\end{align*}
	\end{theorem}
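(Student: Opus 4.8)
The plan is to prove that, almost surely for all large $n$, the rule $\breve{\bbj}_A$ keeps exactly the true indices and discards the rest; the argument for $\breve{\bbj}_C$ runs in parallel. First I would reduce each statistic to a single quadratic form by a rank-one update. Writing $\bba$ for the unit vector with $\bbQ_{{\bm\omega}\backslash j}=\bbQ_{{\bm\omega}}+\bba\bba'$ as in \eqref{Qat}, one has $n\widehat\bgS_{{\bm\omega}\backslash j}=n\widehat\bgS_{{\bm\omega}}+(\bbY'\bba)(\bbY'\bba)'$, whence
\begin{align*}
\breve{A}_{j}=\log\Big(1+\tfrac1n(\bbY'\bba)'\widehat\bgS_{{\bm\omega}}^{-1}(\bbY'\bba)\Big),\qquad \breve{C}_{j}=p+\tfrac1n(\bbY'\bba)'\widehat\bgS_{{\bm\omega}}^{-1}(\bbY'\bba).
\end{align*}
Since $\bbj_*\subset{\bm\omega}$ gives $\bbQ_{{\bm\omega}}\bbX_{\bbj_*}=0$, the full-model estimator $\widehat\bgS_{{\bm\omega}}=\frac1n\bbE'\bbQ_{{\bm\omega}}\bbE$ carries no signal and is independent of $\bba'\bbE$; moreover $\bbY'\bba=\bm\mu+\bbE'\bba$ with $\bm\mu=\|\bbQ_{{\bm\omega}\backslash j}\bbx_{j}\|\,\bm\theta_{j}$, where $\bm\mu=0$ when $j\notin\bbj_*$ and $\frac1n\|\bm\mu\|^2=\tr(\Phi_{{\bm\omega}\backslash j})\to\kappa_{{\bm\omega}\backslash j}$ when $j\in\bbj_*$.

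Next I would evaluate the almost-sure limit of the quadratic form $\frac1n(\bm\mu+\bbE'\bba)'\widehat\bgS_{{\bm\omega}}^{-1}(\bm\mu+\bbE'\bba)$ by random matrix theory. Because $p/(n-k)\to c/(1-\alpha)<1$ under (A4), the matrix $\widehat\bgS_{{\bm\omega}}$ is a rescaled Mar\v{c}enko--Pastur sample covariance whose least eigenvalue stays a.s. bounded away from $0$, so $\frac1p\tr(\widehat\bgS_{{\bm\omega}}^{-1})\asto(1-\alpha-c)^{-1}$. The noise part then concentrates at $\frac1n(\bbE'\bba)'\widehat\bgS_{{\bm\omega}}^{-1}(\bbE'\bba)\to c\,(1-\alpha-c)^{-1}$, the signal part at $\frac1n\bm\mu'\widehat\bgS_{{\bm\omega}}^{-1}\bm\mu\to\kappa_{{\bm\omega}\backslash j}(1-\alpha-c)^{-1}$, and the cross term, of conditional mean zero and (by Cauchy--Schwarz) of smaller order than the signal term, is negligible. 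Collecting terms gives, for $j\notin\bbj_*$, $\breve{A}_j\asto\log\frac{1-\alpha}{1-\alpha-c}$ and $\breve{C}_j\asto p+\frac{c}{1-\alpha-c}$, whereas for $j\in\bbj_*$ both limits acquire the extra summand $\kappa_{{\bm\omega}\backslash j}(1-\alpha-c)^{-1}$ (diverging to $+\infty$ if $\kappa_{{\bm\omega}\backslash j}$ is unbounded), i.e. $\breve{A}_j\to\log\frac{1-\alpha+\kappa_{{\bm\omega}\backslash j}}{1-\alpha-c}$ and $\breve{C}_j\to p+\frac{c+\kappa_{{\bm\omega}\backslash j}}{1-\alpha-c}$.

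With these limits the role of $\vartheta$ is transparent. For any fixed $\vartheta\in(0,\min_{j\in\bbj_*}\kappa_{{\bm\omega}\backslash j})$, the cutoff $\log\frac{1-\alpha+\vartheta}{1-\alpha-c}$ (resp. $p+\frac{\vartheta+c}{1-\alpha-c}$) lies strictly above the overspecified limit, since $\vartheta>0$, and strictly below every underspecified limit, since $\vartheta<\kappa_{{\bm\omega}\backslash j}$. Hence each of the finitely many $j\in\bbj_*$ is a.s. retained and each $j\notin\bbj_*$ is a.s. discarded for all large $n$, yielding $\breve{\bbj}_A=\breve{\bbj}_C=\bbj_*$ eventually and thus strong consistency.

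The main obstacle is uniformity over the $k-k_*=O(n)$ overspecified indices: almost-sure selection demands that $\max_{j\notin\bbj_*}\breve{A}_j$ (and the analogous maximum for $\breve{C}_j$) stay below the cutoff, not merely that each $\breve{A}_j$ does. I would control this by quantifying the fluctuation of $\frac1p(\bbE'\bba)'\widehat\bgS_{{\bm\omega}}^{-1}(\bbE'\bba)$ about $\frac1p\tr(\widehat\bgS_{{\bm\omega}}^{-1})$, which is of order $n^{-1/2}$ up to a logarithmic factor, so that a union bound over the $O(n)$ indices combined with Borel--Cantelli keeps the maximal deviation $o(1)$, dominated by the fixed positive gap built into the cutoff. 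Because (A2) furnishes only four moments, this step requires the usual truncation, recentering and rescaling of the entries of $\bbE$ before the moment bounds for quadratic forms (of the type already used for Theorems \ref{th1} and \ref{newthm}) apply; checking that truncation leaves the above limits intact is the one genuinely technical ingredient.
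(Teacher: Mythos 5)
Your skeleton is the right one, and it is essentially the paper's: the paper proves this theorem as a direct corollary of the limits \eqref{tildeA1} and \eqref{tildeA2} (established for Theorem \ref{newthm} via Lemmas \ref{le1}--\ref{le3}) combined with the rank-one identity $\tau_{{\bm \omega}\backslash j}=1-\alpha+\kappa_{{\bm \omega}\backslash j}$, which is exactly the identity your decomposition produces. Your rank-one update of $\widehat\bgS_{{\bm \omega}}$, the splitting of $\bbY'\bba$ into $\bm\mu+\bbE'\bba$, and the limiting values $\log\frac{1-\alpha}{1-\alpha-c}$, $\log\frac{1-\alpha+\kappa_{{\bm \omega}\backslash j}}{1-\alpha-c}$, $p+\frac{c}{1-\alpha-c}$, $p+\frac{\kappa_{{\bm \omega}\backslash j}+c}{1-\alpha-c}$ are all correct and agree with the paper; the threshold argument and your attention to uniformity over the $O(n)$ overspecified indices are also sound.

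However, two steps would fail as written, and both trace to the same point: you treat $\bbE'\bba$ and $\widehat\bgS_{{\bm \omega}}=\frac1n\bbE'\bbQ_{{\bm \omega}}\bbE$ as independent. That holds under Gaussian errors, but assumption (A2) allows general fourth-moment errors, for which $\bbQ_{{\bm \omega}}\bba=0$ gives uncorrelatedness, not independence; the paper's Lemma \ref{mainle}, eq.\ \eqref{le3.1.3}, exists precisely to handle this, and the dependence correction there happens to vanish because $\bba'\bbQ_{{\bm \omega}}\bba=0$. More seriously, dismissing the cross term $\frac2n\bm\mu'\widehat\bgS_{{\bm \omega}}^{-1}\bbE'\bba$ ``by Cauchy--Schwarz'' is not valid in the main regime of the theorem, where $\kappa_{{\bm \omega}\backslash j}$ stays bounded (assumption (A5)). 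Cauchy--Schwarz gives only $|\mbox{cross}|\le\sqrt{\mbox{signal}\cdot\mbox{noise}}$, which is of the \emph{same} order as the bounded signal term, leaving a slack of size $2\sqrt{\kappa_{{\bm \omega}\backslash j}\,c}\,/(1-\alpha-c)$ in the limit of the quadratic form. Since the theorem must hold for \emph{every} fixed $\vartheta\in(0,\min_{j\in\bbj_*}\{\kappa_{{\bm \omega}\backslash j}\})$, in particular $\vartheta$ arbitrarily close to the minimum, this slack can push the statistic below the cutoff and the separation argument collapses: one needs $\kappa_{{\bm \omega}\backslash j}-2\sqrt{\kappa_{{\bm \omega}\backslash j}\,c}>\vartheta$, which Cauchy--Schwarz cannot deliver. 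What is actually required is that the cross term tends to zero almost surely, a bilinear-form concentration statement; this is the content of \eqref{le3.1.2} in Lemma \ref{mainle}, proved by a leave-one-out martingale decomposition with moment bounds, not by Cauchy--Schwarz. (Your argument does suffice when $\kappa_{{\bm \omega}\backslash j}\to\infty$, i.e.\ under (A5'), since then the cross term is genuinely of smaller order than the signal.) With \eqref{le3.1.2} and \eqref{le3.1.3} substituted at these two points, your proof closes and coincides with the paper's.
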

	The proof of this theorem is presented in Section \ref{proofths}.

\begin{remark}
	Note that the condition in this theorem is much weaker than that in the AIC, BIC, and $C_p$ and in the KOO methods based on the AIC, BIC, and $C_p$. In addition, although $\kappa_{{\bm \omega} \backslash j}$ is not estimable for $j\in \bbj_{*}$, as the general KOO methods are essentially to detect   the univariate outliers, thus there are many well developed methods can be used to determine the value of  $\vartheta$ for application, such as  the  Standard Deviation (SD) method,  Z-Score method, Tukey's  method,  Median Absolute Deviation method and so on.  Here we refer to \citep{BarnettL94O} for more details.
	\end{remark}

\section{Simulation studies}

In this section, we numerically examine the validity of our claims. More precisely, we attempt to examine the consistency properties of the KOO methods and the general KOO methods based on the AIC, BIC, and  $C_p$ in an LLL framework with different settings. The classical AIC, BIC, and  $C_p$ procedures are not considered herein because of their computational challenges. We  conduct a number of simulation studies to examine the effects of assumptions (A2), (A3),  (A5)  and  (A5') on the consistency of estimators $\tilde \bbj_A$, $\tilde \bbj_B$, $\tilde \bbj_C$, $\breve\bbj_A$, and $\breve\bbj_C$. Moreover, we are interested in gaining insight into the rate of convergence.

We consider the following  two settings:
\begin{itemize}
	\item[]Setting I: Fix $k_*=5$,  $p/n=\{0.2, 0.4, 0.6\}$ and $k/n=\{0.1,0.2\}$ with several different values of $n$. 
	Set  $\bbX=(x_{ij})_{n\times k}$, $\Theta_{\bbj_*}=\sqrt{n}{\bf 1}_{5}{\bm\theta}_*$ and $\Theta=(\Theta_{\bbj_*},\bf{0})$, where $\{x_{ij}\}$ are i.i.d. generated from the continuous uniform distributions $U(1,5)$,  ${\bf 1}_{5}$ is a five-dimensional vector of ones and ${\bm\theta}_*=((-0.5)^0,\dots,(-0.5)^{p-1})$. 
		\item[]Setting II: This setting is the same as  Setting I, except $\Theta_{\bbj_*}=n{\bf 1}_{5}{\bm\theta}_*.$
\end{itemize}
We consider three cases for the distribution of $\bbE$: (i) a standard normal distribution; (ii) a standardized $t$ distribution with three degrees of freedom, i.e., $e_{ij} \sim t_3/\sqrt{Var(t_3)}$; and (iii) a standardized chi-square distribution with two degrees of freedom, i.e., $e_{ij} \sim \chi^2_2/\sqrt{Var(\chi^2_2)}$. Here we use the 2 SD method to choose the critical points in the general KOO methods, that are
	\begin{gather*}
  \breve \bbj_A=\{j\in{\bm \omega}|\breve{A}_{j}>\log(\frac{1-\alpha}{1-\alpha-c})+2sd_A\}
  \end{gather*}
  and
  	\begin{gather*}
      \breve \bbj_C=\{j\in{\bm \omega}|\breve{C}_{j}>\frac{c}{1-\alpha-c}+p+2sd_C\},
\end{gather*}
where $sd_A$ and $sd_C$ are the sample standard deviations of $\{\breve{A}_{j}\}$ and $\{\breve{C}_{j}\}$ respectively.

We highlight some salient features of our settings and the distributions. 
For Setting I, convergent values in the conditions for consistency are presented  in Table \ref{tab1} by simulation. From these values and Theorem \ref{newthm}, we know that $\tilde \bbj_{A}$  is strongly consistent in cases where $\{\alpha=0.1, c=0.2, 0.4,0.6\}$ and $\{\alpha=0.2, c=0.2\}$. 
$\tilde \bbj_{C}$  is strongly consistent  in cases where $\{\alpha=0.1,0.2, c=0.2\}$, and in other cases,  $\tilde \bbj_{A}$, $\tilde \bbj_{B}$ and $\tilde \bbj_{C}$ are inconsistent. 
For Setting II, $\log(\tau_{{\bm \omega}\backslash \{1\}})=\log(n)+O(1)$ and $\kappa_{{\bm \omega}\backslash \{1\}}=O(n)$, which satisfy assumption (A5'). Under this setting, whether $\tilde \bbj_{B}$ is strongly consistent depends on the values of $c$ and $\alpha$.
  \begin{table}[htbp]\small\center
	\begin{tabular}{ccccccccccccc}
		\hline
		\multirow{2}{2em}&\multicolumn{4}{c}{$c=.2$} &\multicolumn{4}{c}{$c=.4$} &\multicolumn{4}{c}{$c=.6$} \\ \cline{2-13}
		&$V_1$&$V_2$&$V_3$&$V_4$&$V_1$&$V_2$&$V_3$&$V_4$&$V_1$&$V_2$&$V_3$&$V_4$ \\ \hline
		$\alpha=.1$    &.15&.50&.87&1.49&.21&.10& .81 &1.56& .10&-.30&.92 &1.80 \\ \hline
		$\alpha=.2$     &.11&.40&.91&1.32&.11& 0 &.92&1.43& -.19&-.40&1.21 &1.72 \\ \hline	
	\end{tabular}
	  \caption{Values  of  $V_1:=2c-\log(\frac{1-\alpha}{1-\alpha-c})$,  $V_2:=2(1-\alpha-c)-(1-\alpha)$, $V_3:=\log(\tau_{{\bm \omega}\backslash \{1\}})-\log(1-\alpha-c)-2c$, and $V_4:=\kappa_{{\bm \omega}\backslash \{1\}}-\frac{c(1-\alpha-2c)}{1-\alpha}$.}\label{tab1}
\end{table} 

To illustrate the performance of these estimators, the selection percentages of belonging to  $\bbJ_-$, $\{\bbj_*\}$ and $\bbJ_+$  were computed by Monte Carlo simulations with 1,000 repetitions. We first considered the standard  normal distribution case. Since the sum of the three selection percentages is 1, for the sake of clarity of the plots, we display only the selection percentages of belonging to $\bbJ_-$ and $\{\bbj_*\}$ (see Figure \ref{fig_normal_1}). Moreover, in some cases,  when the selection percentages of belonging to $\bbJ_+$ are close to 1, the selected model sizes are indicators of the consistency of the estimators, as presented in Figure \ref{fig_normal_2}. 
On the basis of these results, we have the following conclusions: (1) Under Setting I, the performances of the general KOO methods $\breve\bbj_A$ and $\breve\bbj_C$ are much better than those of the KOO methods $\tilde \bbj_A$, $\tilde \bbj_B$, and $\tilde \bbj_C$, and the sufficient conditions for the consistency of the KOO methods are satisfied. The convergence of $\tilde \bbj_C$ is the fastest among the five estimators. (2) If $c$ is large or close to the boundary of the sufficient conditions for consistency, i.e., $V_1$ and $V_2$ are small, then the convergence rate of the selection probabilities is slow, i.e., only sufficiently large samples can guarantee their selection accuracy. However,  despite the low selection accuracy in this case, these methods usually overestimate the true model, and the selection sizes are also under control. An overspecified model is more acceptable than an underspecified model.
(3) 
The  KOO method based on the BIC performs the best among the three methods under Setting II, and when its   sufficient conditions for consistency are satisfied. The reasons is that  overestimating the true model by the BIC is difficult compared to overestimating the true model by the other criteria.

\begin{figure}[htbp!]\subfigure[Setting I]{
		\includegraphics[width=12.2cm,height=6cm]{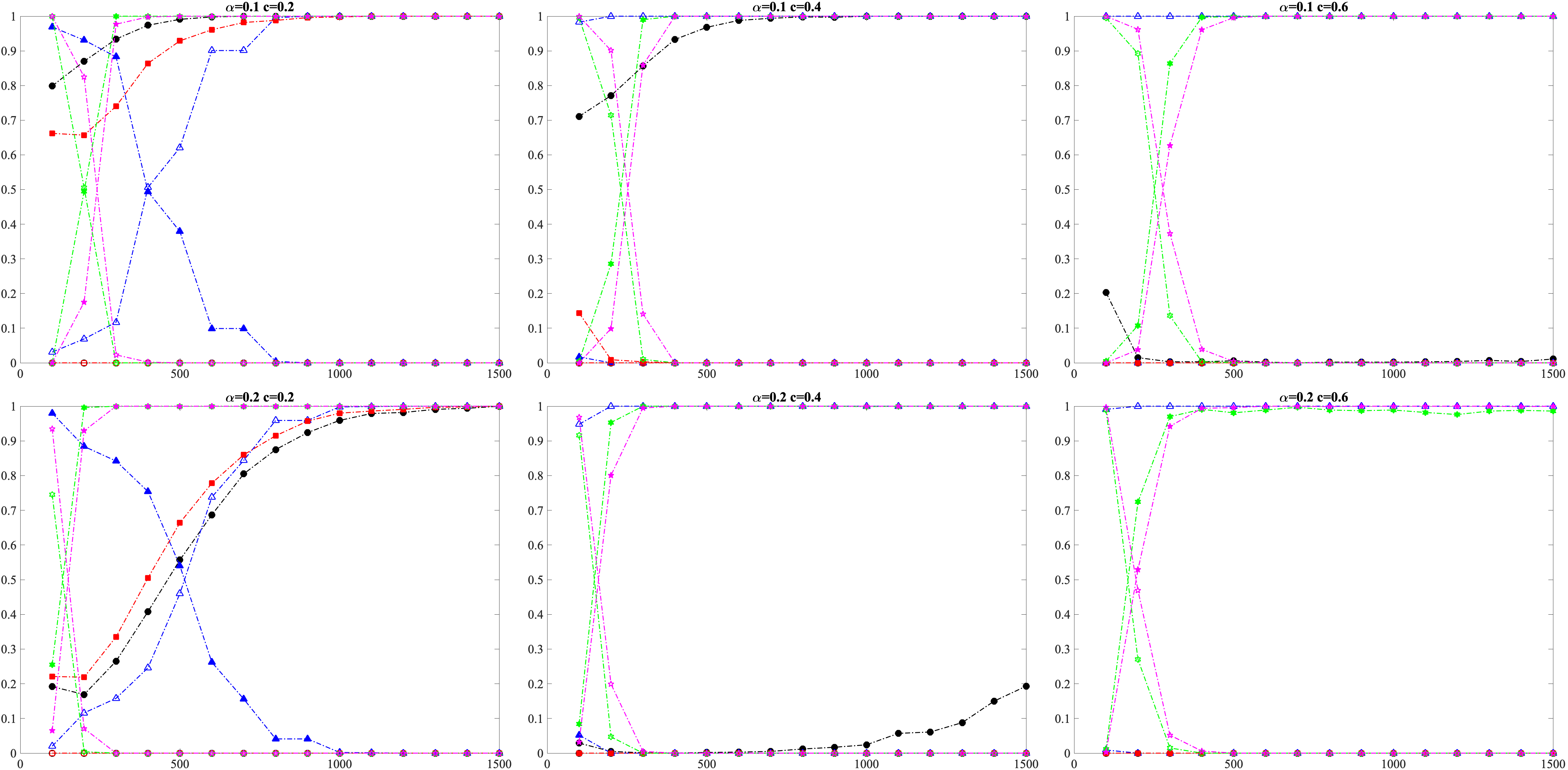}}
		\subfigure[Setting II]{
		\includegraphics[width=12.2cm,height=6cm]{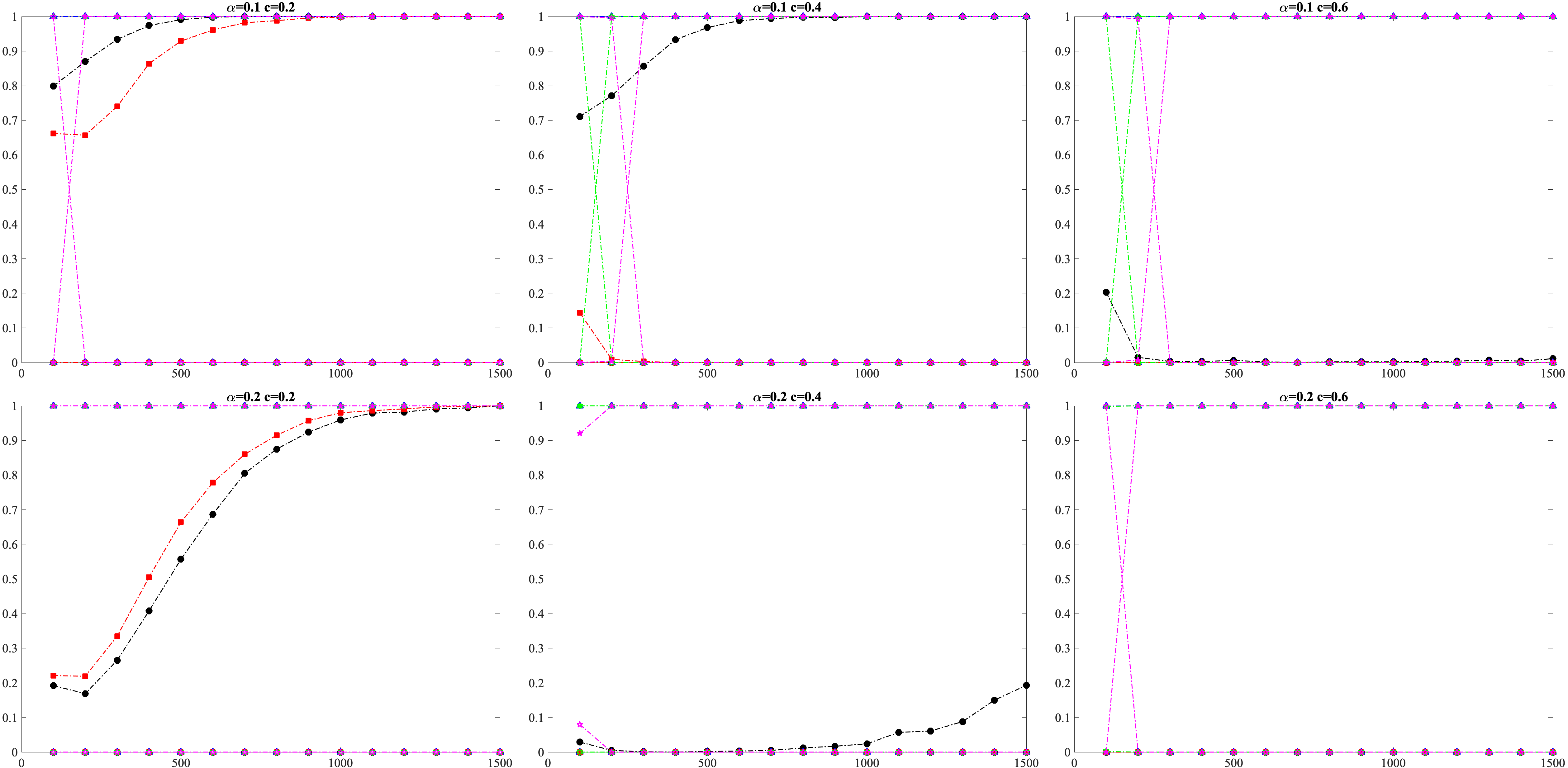}}
		\caption{Selection percentages under AIC, BIC and $C_p$ for Settings I and II with a standard normal distribution. The horizontal axes represent the sample size $n$, and the vertical axes represent the selection percentages. Black solid circles, blue solid triangles, red  solid squares, green solid hexagrams and magenta solid pentagrams
		denote the selection percentages of $\tilde\bbj_A=\bbj_*$,  $\tilde\bbj_B=\bbj_*$, $\tilde\bbj_C=\bbj_*$, $\breve\bbj_A=\bbj_*$ and $\breve\bbj_C=\bbj_*$, respectively. Correspondingly, black  circles, blue  triangles, red squares, green hexagrams, and magenta  pentagrams
		denote the selection percentages of $\tilde\bbj_A\in\bbJ_-$,  $\tilde\bbj_B\in\bbJ_-$, $\tilde\bbj_C\in\bbJ_-$, $\breve\bbj_A\in\bbJ_-$ and $\breve\bbj_C\in\bbJ_-$, respectively. 
}
		\label{fig_normal_1} 
\end{figure}

\begin{figure}[htbp!]
\subfigure[Setting I]{
		\includegraphics[width=6.1cm,height=6cm]{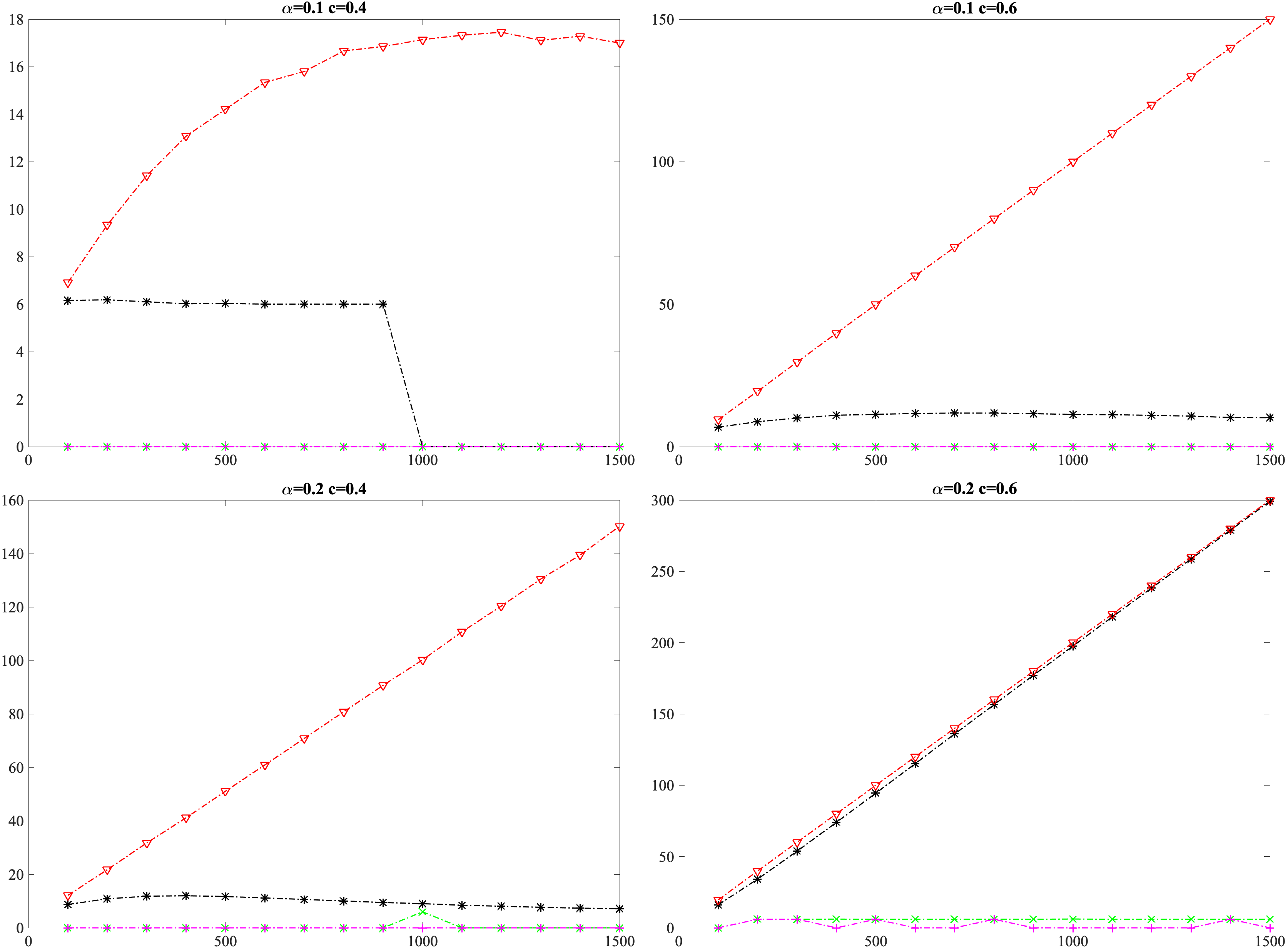}}
		\subfigure[Setting II]{
		\includegraphics[width=6.1cm,height=6cm]{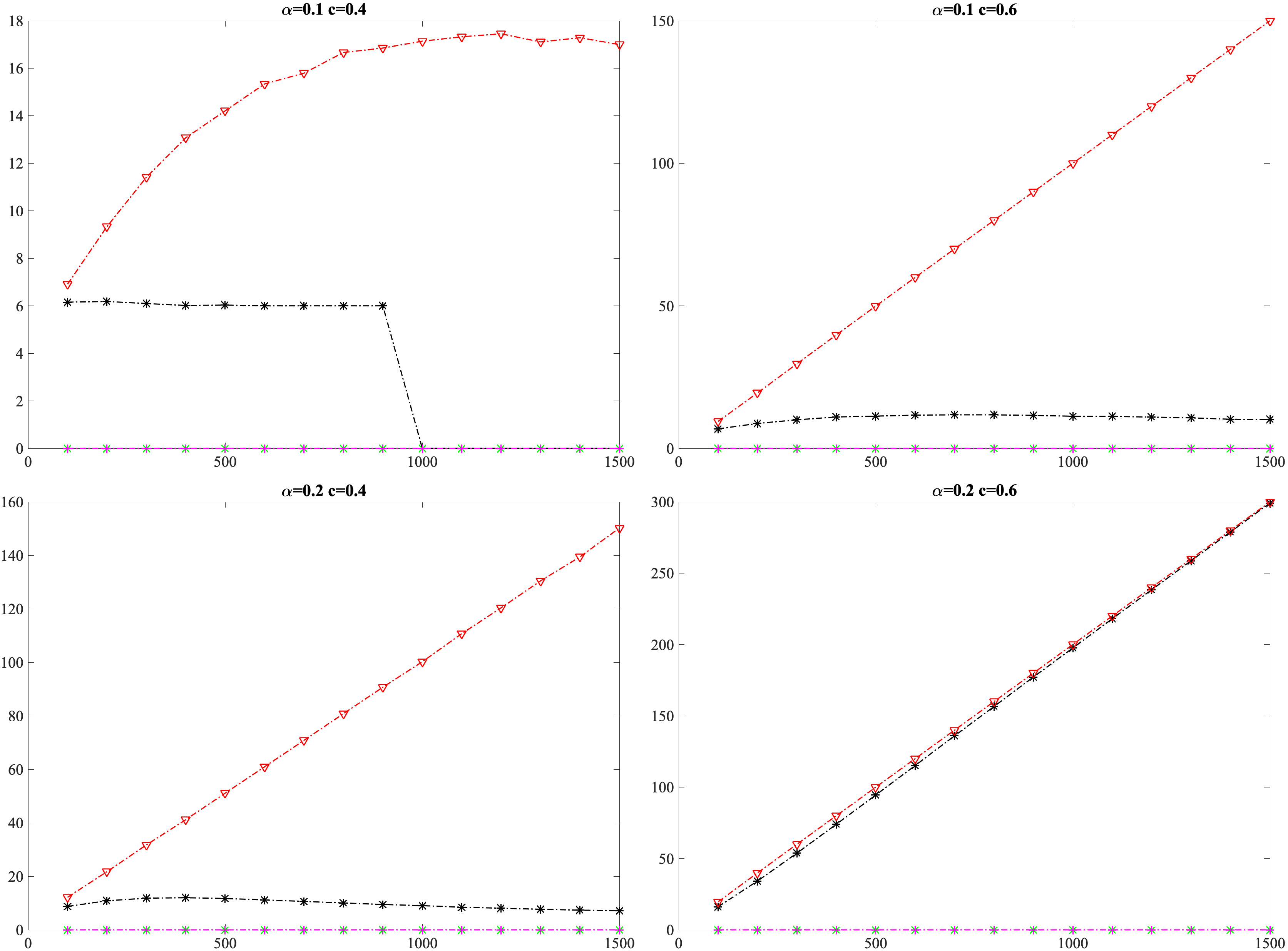}}
		\caption{Overspecified model sizes of  AIC and $C_p$ for Settings I and II with a standard normal distribution. The horizontal axes represent the sample size $n$, and the vertical axes represent the model size. 	Black  asterisks,  red   right-pointing triangles, green  crosses and magenta  plus signs
		denote the average sizes  of $\tilde\bbj_A\in\bbJ_+$,  $\tilde\bbj_C\in\bbJ_+$, $\breve\bbj_A\in\bbJ_+$ and $\breve\bbj_C\in\bbJ_+$, respectively.  In this figure, we let $0/0=0$.
		}
		\label{fig_normal_2}  
\end{figure}

The results under non-normal distributions are similar to those under normal distributions. Please see Figures \ref{fig_t3_1} through \ref{fig_chi2_2} in the Appendix, which verify our conclusion  that the consistency of these estimators is independent of the distribution.

\section{Real data analysis}
For illustration, we apply the proposed methods to two real data examples. The first one is chemometrics data, which can be taken from \cite{SkagerbergM92M} (we replaced the value 19203 with 1.9203 in the the 37th observation). This data set has been studied by \cite{BreimanF97P, SimilaT07I}. In this data set, 
There are $n=56$ observations  with $k=22$ predictor variables and $p=6$ responses. The data are taken from a simulation of a low density tubular polyethylene reactor and to study the relationship between polymer properties and the process. The predictor variables consist of 20 temperatures measured at equal distances along the  low density  polyethylene reactor section together with the wall temperature of the reactor and the solvent feed rate. The responses are the output characteristics of the polymers including two  molecular weights, two branching frequencies and  the contents of two groups. Similar to \cite{BreimanF97P}, we log-transformed the responses values, because they are highly skewed to the right. By calculation, we get $\phi(\alpha_n,c_n)=0.0266$,  $\psi(\alpha_n,c_n)=0.3444$,  $2c-\log(\frac{1-\alpha_n}{1-\alpha_n-c_n})=0.0201 $ and $2(1-\alpha_n-c_n)- (1-\alpha_n)=0.392$, which are all bigger than 0. Thus the AIC, BIC, $C_p$,    KOO methods  and  general KOO methods are all applicable. We list the result in Table \ref{tab_real1}. For this dataset,  we use  the  1 SD method to choose the critical points in the general KOO methods, because there exist large extreme values in $\{\tilde{A}_{j}\}$ and $\{\tilde{C}_{j}\}$.  We also present the scatterplots of $\{\tilde{A}_{j}\}$ and $\{\tilde{C}_{j}\}$ in Figure \ref{real_fig1}, and that seems the results obtained by the general KOO methods $\breve  \bbj_{A}$ and $\breve  \bbj_{C}$ are more reasonable than the KOO methods 		$\tilde \bbj_A$, 		$\tilde \bbj_B$ and 		$\tilde \bbj_C$.
  \begin{table}[htbp]\center
	\begin{tabular}{c|c}
		\hline
		Methods   &  Results\\ \hline
		$\hat \bbj_A$    & 1,	2,	3,	4,	5,	7,	9,	10,	11,	12,	15,	16,	17,	19,	20,	21,	22\\ 
		$\hat \bbj_B$   &2,	3,	4,	5,	7,	11,	16,	17,	20,	21,	22\\
		$\hat \bbj_C$   &1,	3,	4,	5,	7,	9,	11,	12,	15,	16,	17,	19,	20,	21,	22\\
		$\tilde \bbj_A$ & 1,	2,	3,	4,	5,	7,	8,	9,	10,	11,	12,	15,	16,	17,	19,	20,	21,	22  \\
		$\tilde \bbj_B$&3,	4,	5,	7,	11,	21,	22\\
		$\tilde \bbj_C$&2,	3,	4,	5,	7,	9,	11	,12,	21,	22\\
		$\breve  \bbj_{A}$&3,	4,	22\\
		$\breve \bbj_{C}$&3,	4,	21,	22\\
		\hline	
	\end{tabular}
	  \caption{Selection results under the AIC, BIC, $C_p$,    KOO methods  and  general KOO methods for the chemometrics dataset. Labels 1-20 are  temperatures,  21 is the wall temperature
and 22 is the solvent feed rate.}\label{tab_real1}
\end{table} 
\begin{figure}[htbp!]
\subfigure[Scatterplot of $\breve{A}_{j}$]{
		\includegraphics[width=6cm,height=4.5cm]{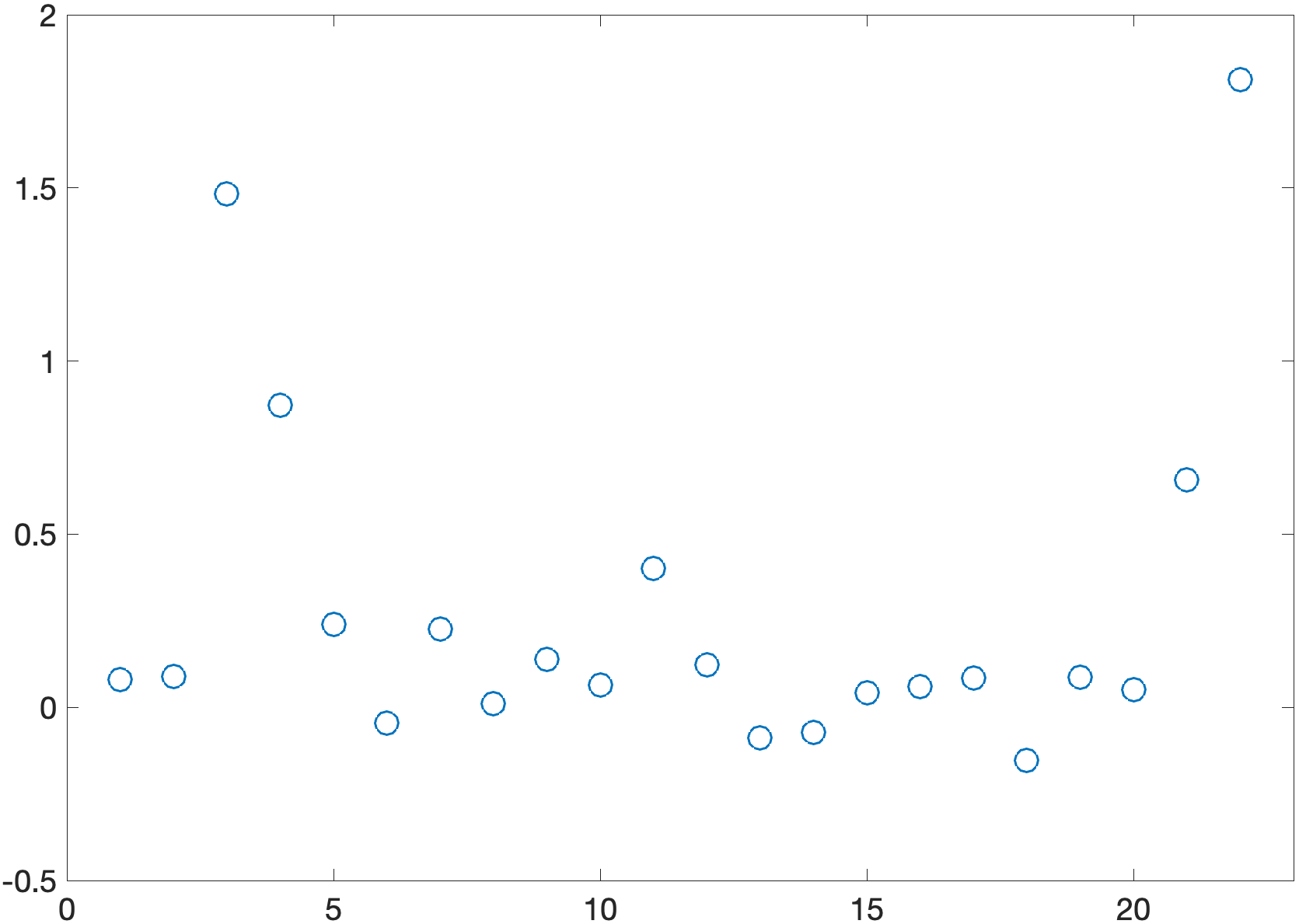}}
		\subfigure[Scatterplotof $\breve{C}_{j}$]{
		\includegraphics[width=6cm,height=4.5cm]{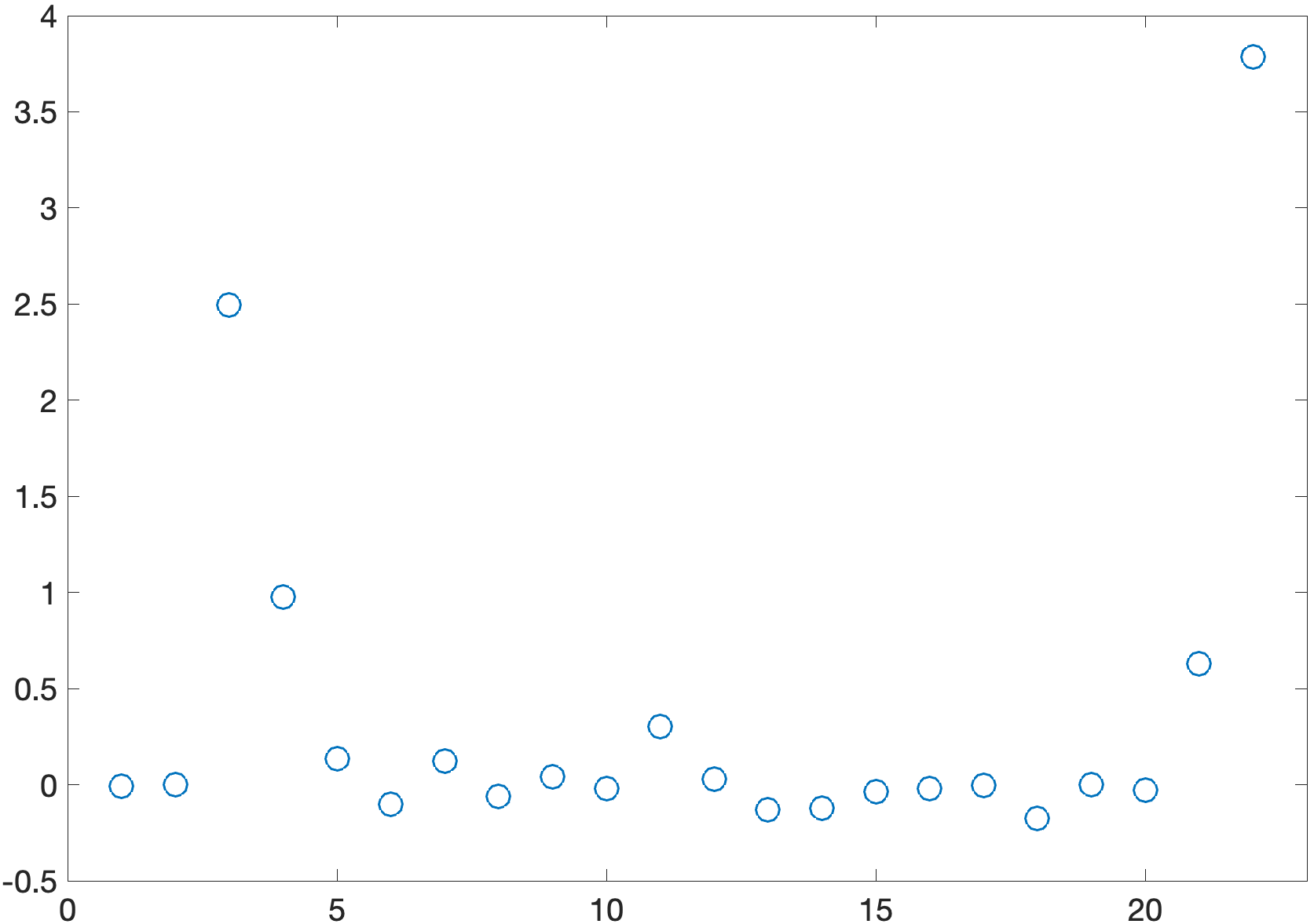}}
		\caption{Scatterplots for the chemometrics dataset.}
		\label{real_fig1}  
\end{figure}

Analogously, we investigate  a multivariate yeast cell-cycle dataset from \cite{SpellmanS98C}, which can be obtained in the R package ``spls". 
This data set contains 542 cell-cycle-related genes (i.e., $n = 542$).
Each gene contains 106  binding levels of  transcription factors (i.e., $k = 106$) and 18 time points covering two cell cycles (i.e., $p = 18$). The binding levels of the transcription factors play a role in determining which genes are expressed and help detail the process behind eukaryotic cellcycles. More explanations about the  dataset can be found in \citep{WangC07G, ChunK10S,ChenH12S,KongL17I}. As $2^{106}$ is too large to apply the classical methods based on the AIC, BIC and $C_p$, thus we only use KOO methods  and  general KOO methods for this dataset. The selection results are presented in Table \ref{tab_real2}, and the scatterplots of $\{\tilde{A}_{j}\}$ and $\{\tilde{C}_{j}\}$ can be found in Figure \ref{real_fig2}. The experimentally confirmed transcription factors that are related to the cell cycle regulation by \cite{WangC07G} are marked bold. These results also support our conclusion that the general KOO methods $\breve  \bbj_{A}$ and $\breve  \bbj_{C}$ are more reasonable than the KOO methods 		$\tilde \bbj_A$, 		$\tilde \bbj_B$ and 		$\tilde \bbj_C$.
  \begin{table}[htbp]\center
	\begin{tabular}{c|c}
		\hline
		Methods   &  Results\\ \hline
		$\tilde \bbj_A$ & {\bf SWI5},	{\bf  STE12},	{\bf  ACE2},	 {\bf NDD1},	 RME1,	HIR2,	{\bf SWI4},	 {\bf MBP1},	{\bf MET31},	HIR1,	\\
		& {\bf MCM1},	YFL044C,	RLM1,	YAP5,	IME4,	NRG1,	GAT3,	MIG1,	SIP4,	SMP1\\
			$\tilde \bbj_B$&{\bf SWI5}\\
			$\tilde \bbj_C$&{\bf SWI5},	{\bf  STE12},	{\bf  ACE2},	 {\bf NDD1},	 RME1,	HIR2,	{\bf SWI4},	 {\bf MBP1},	{\bf MET31}\\
			$\breve  \bbj_{A}$&{\bf SWI5},	{\bf  STE12},	{\bf  ACE2},	 {\bf NDD1},	 RME1,	HIR2,	{\bf SWI4},	 {\bf MBP1},	{\bf MET31},	HIR1\\
			$\breve \bbj_{C}$&
{\bf SWI5},	{\bf  STE12},	{\bf  ACE2},	 {\bf NDD1},	 RME1,	HIR2,	{\bf SWI4},	 {\bf MBP1}\\
		\hline	
	\end{tabular}
	  \caption{Selection results under the     KOO methods  and  general KOO methods for the yeast cell-cycle dataset. The experimentally confirmed transcription factors are marked bold.}\label{tab_real2}
\end{table} 
\begin{figure}[htbp!]
\subfigure[Scatterplot of $\breve{A}_{j}$]{
		\includegraphics[width=6cm,height=4.5cm]{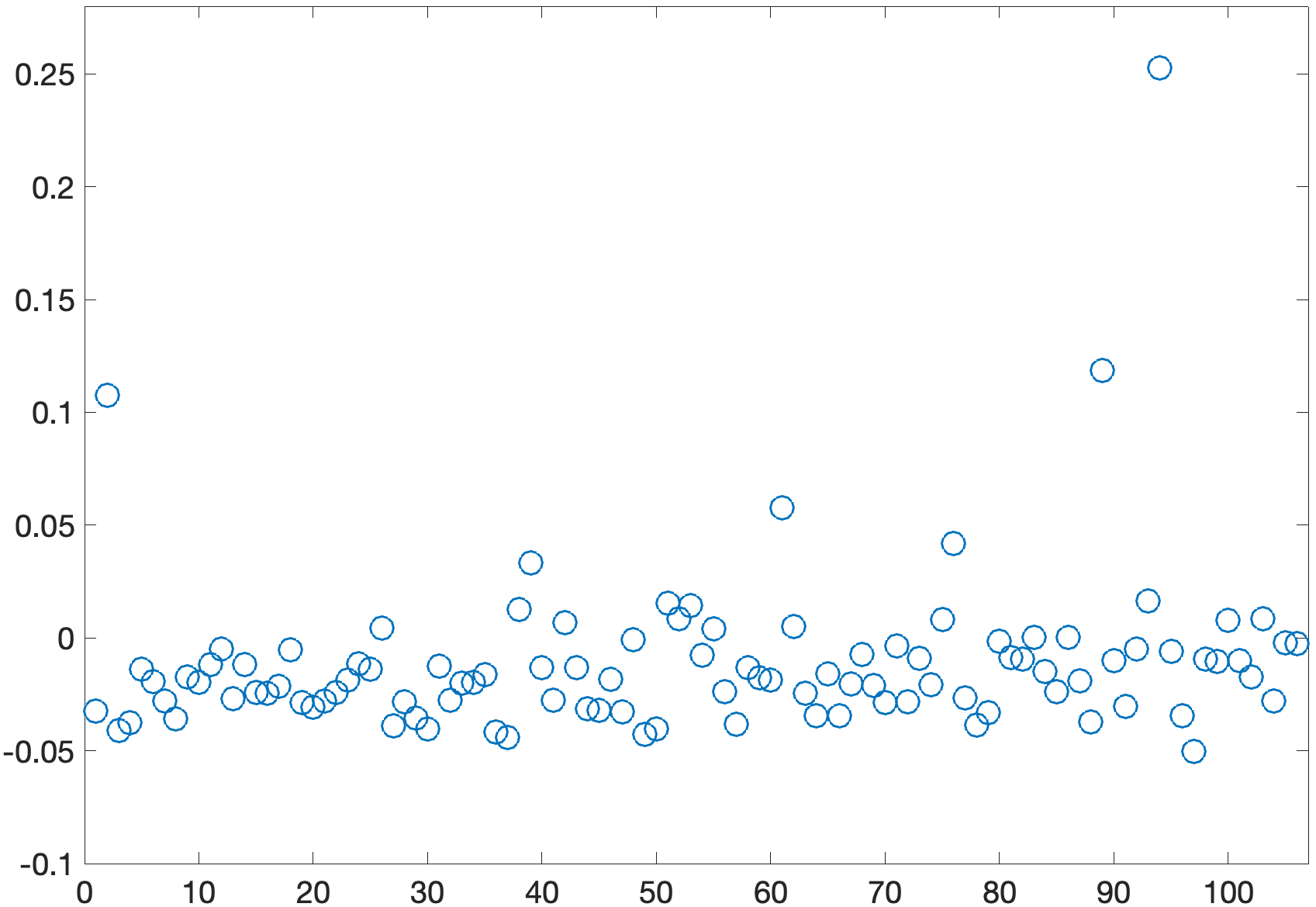}}
		\subfigure[Scatterplotof $\breve{C}_{j}$]{
		\includegraphics[width=6cm,height=4.5cm]{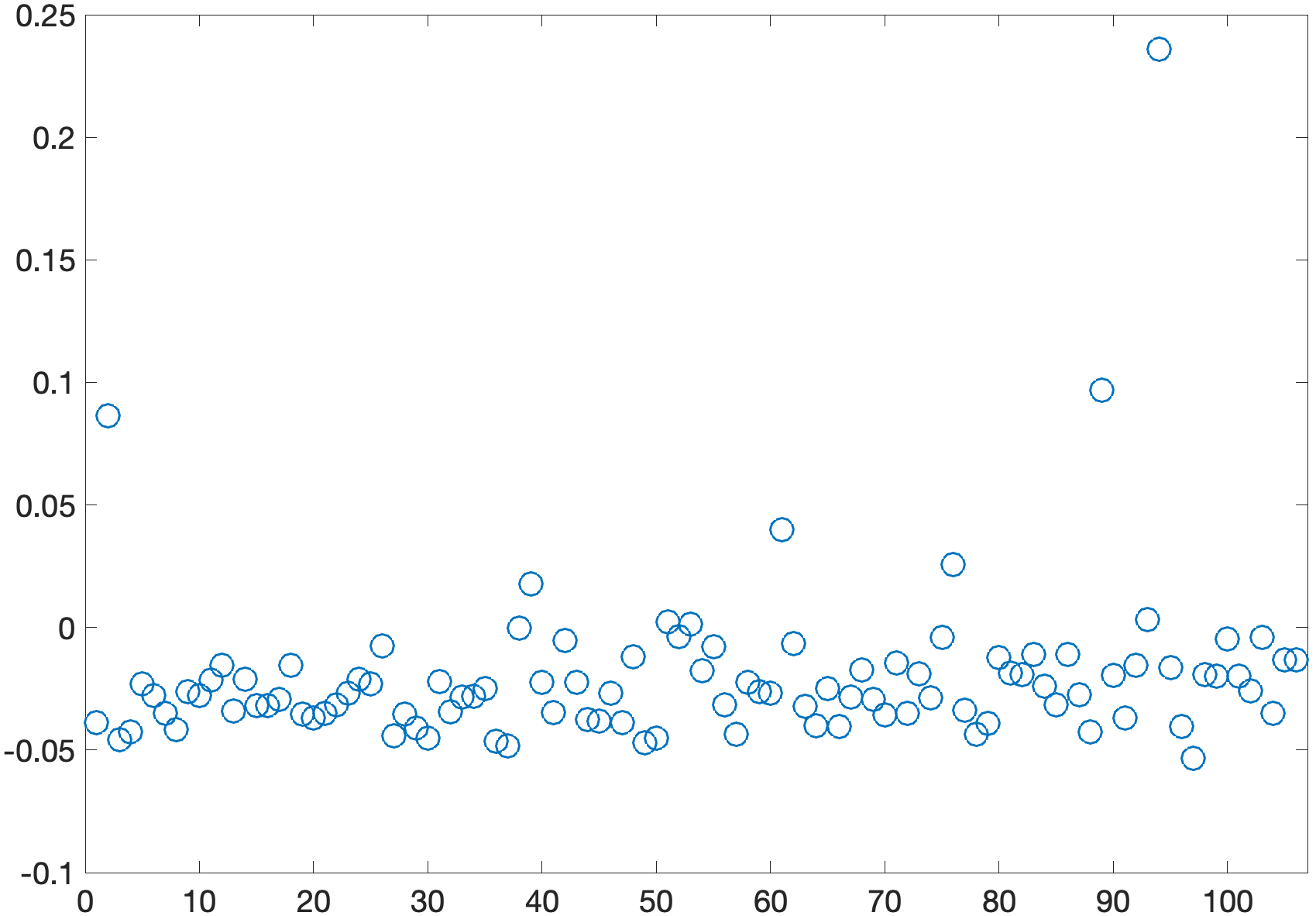}}
		\caption{Scatterplots for the yeast cell-cycle dataset.}
		\label{real_fig2}  
\end{figure}

\section{Proofs  of Theorems \ref{th1}, \ref{th2}, \ref{newthm} and \ref{gen_kooth}}\label{proofths}
In this section, we present the proofs  of Theorems \ref{th1}, \ref{th2}, \ref{newthm} and \ref{gen_kooth}. We start	with some preliminary results, which are not only the basis of the theorems but also have meaning themselves and have many potential applications in other multivariate analysis problems.  In general, a $\{c_n, \alpha_k\}$-dependent random variable $Z(c_n, \alpha_k)$ cannot serve as a limiting target of a random sequence $Z_n(c_n, \alpha_k), n \geq1$. Nevertheless, to ease the presentation, from time to time, without ambiguity we still write $Z_n(c_n, \alpha_k)\to Z(c_n, \alpha_k)$, if $Z_n(c_n, \alpha_k)-Z(c_n, \alpha_k)\to0$.

\subsection{Preliminaries}\label{Pre}
From the definition of the selection method based on the AIC, BIC, and $C_p$ in \eqref{AIC}-\eqref{hatABC}, 
the strong consistency of  the selection method based on the AIC (resp. the BIC and $C_p$) is equivalent to that for all $\bbj\in \bbJ\backslash \bbj_*$, $A_{\bbj}>A_{\bbj_*}$ (resp. $B_{\bbj}>B_{\bbj_*}$ and $C_{\bbj}>C_{\bbj_*}$) almost surely for sufficiently large $p$ and $n$. In addition, as $\bbJ=\bbJ_-\cup\bbj_*\cup\bbJ_+$, we need to consider only the following two cases, i.e., the overspecified case ($\bbj\in \bbJ_+$) and the underspecified case ($\bbj\in \bbJ_-$), and investigate for each case the conditions that guarantee that the inequality $A_{\bbj}>A_{\bbj_*}$ (resp. $B_{\bbj}>B_{\bbj_*}$ and $C_{\bbj}>C_{\bbj_*}$) holds.
We first consider the overspecified case, i.e., $\bbj\in \bbJ_+$, and assume that $k_{\bbj}-k_{\bbj_*}=m>0$; then, we have 
\begin{align*}  \frac1n(A_{\bbj}-A_{\bbj_*})=\frac1n\sum_{t=0}^{m-1}(A_{\bbj_{-t}}-A_{\bbj_{-t-1}}).
\end{align*}
Based on the definition of $A_{\bbj}$ in  \eqref{AIC}, it follows that 
\begin{align*}
	A_{\bbj_{-t}}-A_{\bbj_{-t-1}}=\log\left(\frac{|n\widehat\bgS_{\bbj_{-t}}|}{|n\widehat\bgS_{\bbj_{-t-1}}|}\right)+2c_n,
\end{align*}
which implies
\begin{align}
	\frac1n(A_{\bbj}-A_{\bbj_*})=\sum_{t=0}^{m-1}\left[\log\left(\frac{|n\widehat\bgS_{\bbj_{-t}}|}{|n\widehat\bgS_{\bbj_{-t-1}}|}\right)+2c_n\right].
\label{AIC_1}
\end{align}
Analogously, we also have 
 \begin{align}
	\frac1n(B_{\bbj}-B_{\bbj_*})=\sum_{t=0}^{m-1}\left[\log\left(\frac{|n\widehat\bgS_{\bbj_{-t}}|}{|n\widehat\bgS_{\bbj_{-t-1}}|}\right)+\log(n)c_n\right]
\label{BIC_1}
\end{align}
and
\begin{align}
 \frac1n(C_{\bbj}-C_{\bbj_*})=&(1-\alpha_k)\rtr\widehat[\bgS_{{\bm \omega}}^{-1}(\widehat\bgS_{\bbj}-\widehat\bgS_{\bbj_*})]+2mc_n\non
 =&\sum_{t=0}^{m-1}\left((1-\alpha_k)\rtr[\widehat\bgS_{{\bm \omega}}^{-1}(\widehat\bgS_{\bbj_{-t}}-\widehat\bgS_{\bbj_{-t-1}})]+2c_n\right).
 \label{Cp_1}
\end{align}
Then, we have the following lemma. 
\begin{lemma}\label{le1} Suppose that assumptions {\rm (A1) through (A4)} hold. For all overspecified models $
\bbj$ with  $k_{\bbj}-k_{\bbj_*}=m>0$, we have   
	\begin{align}\label{le1_eq1}
\frac1{n}(A_{\bbj}-A_{\bbj_*})- \sum_{t=1}^m\log\left(\frac{1-\alpha_{m-t}-c_n}{1-\alpha_{m-t}}\right)-2mc_n=o_{a.s.}(m),
	\end{align}
		\begin{align}\label{le1_eq3}
\frac1{n}(B_{\bbj}-B_{\bbj_*})-\sum_{t=1}^m\log\left(\frac{1-\alpha_{m-t}-c_n}{1-\alpha_{m-t}}\right)-
mc_n\log(n)=o_{a.s.}(m)
	\end{align}
	and 
	\begin{align}\label{le1_eq2}
\frac1{n}(C_{\bbj}-C_{\bbj_*})- \frac{mc_n(\alpha_k-1)}{1-\alpha_k-c_n}-2c_nm
=o_{a.s.}(m).
	\end{align}
	Moreover, if $\lim_{n\to\infty} 
	 \alpha_m>0$, then we have 
	\begin{align}\label{le1_eq1}
\frac1{n^2}(A_{\bbj}-A_{\bbj_*})= \phi(\alpha_m,c_n)+o_{a.s.}(1),
	\end{align}
		\begin{align}\label{le1_eq3}
\frac1{n^2}(B_{\bbj}-B_{\bbj_*})-(
\log(n)-2)c_n\alpha_m= \phi(\alpha_m,c_n)+o_{a.s.}(1),
	\end{align}
	and 
	\begin{align}\label{le1_eq2}
\frac1{n^2}(C_{\bbj}-C_{\bbj_*})=\alpha_m\psi(\alpha_k,c_n)+o_{a.s.}(1).
	\end{align}
\end{lemma}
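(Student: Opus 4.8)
The plan is to reduce everything to one-step increments via the telescoping identities \eqref{AIC_1}, \eqref{BIC_1} and \eqref{Cp_1}, and then to analyse a single determinant (or trace) ratio across a one-variable change in the nested chain $\bbj=\bbj_0\supset\cdots\supset\bbj_{-m}=\bbj_*$. First I would apply the rank-one update \eqref{Qat} in the form $\bbQ_{\bbj_{-t-1}}=\bbQ_{\bbj_{-t}}+\bba_{-t-1}\bba_{-t-1}'$, which yields $n\widehat\bgS_{\bbj_{-t-1}}=n\widehat\bgS_{\bbj_{-t}}+\bbw_t\bbw_t'$ with $\bbw_t=\bbY'\bba_{-t-1}$. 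The matrix determinant lemma then collapses the determinant ratio to a scalar, $\log(|n\widehat\bgS_{\bbj_{-t}}|/|n\widehat\bgS_{\bbj_{-t-1}}|)=-\log(1+\bbw_t'(n\widehat\bgS_{\bbj_{-t}})^{-1}\bbw_t)$, and similarly the $C_p$ increment becomes $-(1-\alpha_k)\bbw_t'(n\widehat\bgS_{\bm\omega})^{-1}\bbw_t+2c_n$. Since $\bbj\in\bbJ_+$ is overspecified, every model in the chain contains $\bbj_*$, so $\bbQ_{\bbj_{-t}}\bbX_{\bbj_*}=0$; the signal $\bbX_{\bbj_*}\Theta_{\bbj_*}$ is annihilated and we are left with the purely noise-driven objects $n\widehat\bgS_{\bbj_{-t}}=\bbE'\bbQ_{\bbj_{-t}}\bbE$ and $\bbw_t=\bbE'\bba_{-t-1}$.

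The analytic core is to prove $\bbw_t'(n\widehat\bgS_{\bbj_{-t}})^{-1}\bbw_t\asto c_n/(1-\alpha_{m-t}-c_n)$. The decisive structural fact is that $\bba_{-t-1}$ lies in the range of $\bbP_{\bbj_{-t}}$, i.e.\ $\bbQ_{\bbj_{-t}}\bba_{-t-1}=0$, so $\bbw_t$ is a function of $\bbP_{\bbj_{-t}}\bbE$ only while $n\widehat\bgS_{\bbj_{-t}}$ depends on $\bbQ_{\bbj_{-t}}\bbE$ only; these components are independent for Gaussian errors and, under (A2), the same replacement is justified by a quadratic-form (trace) concentration inequality, giving $\bbw_t'(n\widehat\bgS_{\bbj_{-t}})^{-1}\bbw_t=\tr((\bbE'\bbQ_{\bbj_{-t}}\bbE)^{-1})+o_{a.s.}(1)$. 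Because $\bbE'\bbQ_{\bbj_{-t}}\bbE$ is a $p\times p$ sample covariance of effective size $N_t=n-k_*-(m-t)$ with $N_t-p\asymp n(1-\alpha-c)>0$ by (A4), the Marchenko--Pastur law gives $\tr((\bbE'\bbQ_{\bbj_{-t}}\bbE)^{-1})\to p/(N_t-p)=c_n/(1-\alpha_{m-t}-c_n)+o(1)$. Each determinant increment therefore tends to $\log((1-\alpha_{m-t}-c_n)/(1-\alpha_{m-t}))$; the same computation with $\bbQ_{\bm\omega}$ (rank $n-k$) in place of $\bbQ_{\bbj_{-t}}$ handles the $C_p$ increment, which converges to the $t$-free constant $\psi(\alpha_k,c_n)$.

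Summing the increments produces the two tiers of the lemma. For the first tier I add the penalties $2c_n$, $c_n\log n$ and sum over $t$, obtaining the stated log-sum together with $2mc_n$, $mc_n\log n$; for $C_p$ the increment is constant in $t$, so the sum is simply $m\psi(\alpha_k,c_n)=mc_n(\alpha_k-1)/(1-\alpha_k-c_n)+2mc_n$. For the second tier, where $\lim\alpha_m>0$ and $m\asymp n$, I divide by $n^2$ and recognise $\tfrac1n\sum_t\log((1-\alpha_{m-t}-c_n)/(1-\alpha_{m-t}))$ as a Riemann sum of mesh $1/n$ converging to $\int_0^{\alpha_m}\log((1-x-c_n)/(1-x))\,dx$; evaluating this integral gives $(1-c_n)\log(1-c_n)+(1-\alpha_m)\log(1-\alpha_m)-(1-\alpha_m-c_n)\log(1-\alpha_m-c_n)$, and adding the rescaled penalties $2c_n\alpha_m$ (AIC) and $c_n\alpha_m\log n$ (BIC) reproduces exactly $\phi(\alpha_m,c_n)$, $\phi(\alpha_m,c_n)+(\log n-2)c_n\alpha_m$ and $\alpha_m\psi(\alpha_k,c_n)$.

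The main obstacle I expect is uniformity. The conclusions must hold almost surely and \emph{simultaneously} for all overspecified $\bbj$ and all steps $t$, and in the second tier the per-step errors are divided only by $n$ while the number of summands grows like $n$, so each $o_{a.s.}(1)$ must be uniform in $t$ and across the up-to-$2^k$ chains. Upgrading the in-probability trace/quadratic-form concentration to this simultaneous almost-sure statement is where I expect assumption (A2) (finite fourth moments) to be used, via moment bounds together with a union/maximal-inequality argument; handling the non-Gaussian case (where $\bbP_{\bbj_{-t}}\bbE$ and $\bbQ_{\bbj_{-t}}\bbE$ are only uncorrelated, not independent) and absorbing the $O(1)$ endpoint mismatch from reindexing the log-sum ($\alpha_0$ versus $\alpha_m$) into the stated $o_{a.s.}(m)$ order are the remaining delicate points.
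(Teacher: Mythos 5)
Your outline follows the paper's own route almost step for step: telescope via \eqref{AIC_1}--\eqref{Cp_1}, collapse each one-variable step by a rank-one determinant identity, use the fact that $\bbQ_{\bbj_{-t}}$ annihilates $\bbX_{\bbj_*}\Theta_{\bbj_*}$ for overspecified chains so that $\bbY$ may be replaced by $\bbE$, identify the a.s.\ limit of a single quadratic form, and pass to a Riemann sum/integral for the $\lim\alpha_m>0$ tier. Your orientation of the update (larger $=$ smaller $+$ rank one, giving $-\log\bigl(1+\bbw_t'(n\widehat\bgS_{\bbj_{-t}})^{-1}\bbw_t\bigr)$ with $\bbQ_{\bbj_{-t}}\bba_{-t-1}=0$) is algebraically equivalent to the paper's (which writes $\log\bigl(1-\bba_{t+1}'\bbE(\bbE'\bbQ_{\bbj_{-t-1}}\bbE)^{-1}\bbE'\bba_{t+1}\bigr)$ with the vector lying \emph{inside} the range of the projection), and all your limiting constants, the integral evaluation against $\phi(\alpha_m,c_n)$, and the $C_p$ constant $\psi(\alpha_k,c_n)$ check out.

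The genuine gap is at the analytic core. Your reduction $\bbw_t'(n\widehat\bgS_{\bbj_{-t}})^{-1}\bbw_t=\tr\bigl((\bbE'\bbQ_{\bbj_{-t}}\bbE)^{-1}\bigr)+o_{a.s.}(1)$ is justified only in the Gaussian case, where $\bbE'\bba_{-t-1}$ and $\bbQ_{\bbj_{-t}}\bbE$ are independent. Under (A2) they are merely uncorrelated: the vector $\bbw_t=\bbE'\bba_{-t-1}$ and the matrix $\bbE'\bbQ_{\bbj_{-t}}\bbE$ are built from the \emph{same} $\bbE$, so no off-the-shelf quadratic-form/trace concentration applies, and ``moment bounds plus a union/maximal inequality'' is not a mechanism that produces the limit — this is precisely where the paper does its real work. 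The paper proves a dedicated result, Lemma \ref{mainle}, by a leave-one-column-out martingale decomposition (Burkholder's inequality, Borel--Cantelli) at fixed $z\in\mathbb{C}^+$, followed by Vitali's theorem and the limit $z\downarrow 0+0i$. Its formula \eqref{le3.1.3} for $\frac1p\bga_1'\bbE\bbM^{-1}\bbE'\bga_2$ contains, besides the term you predict, a second term proportional to $\bga_1'\bbQ_{\bbj_t}\bga_2$, which is exactly the correction produced by the dependence; in your formulation that term vanishes because $\bbQ_{\bbj_{-t}}\bba_{-t-1}=0$ (just as in the paper's $C_p$ computation, which uses $\bba_t'\bbQ_{\bm\omega}\bba_t=0$), so your constant $c_n/(1-\alpha_{m-t}-c_n)$ is correct — but note that in the paper's AIC formulation, where the vector lies in the range of the projection, the dependence correction does \emph{not} vanish and the limit $c_n/(1-\alpha_{m-t})$ differs from the naive trace value, which shows the independence heuristic cannot be taken for granted and must be replaced by a lemma of this type. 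The uniformity-in-$t$ and over-chains issue you flag is real but secondary (and the paper itself treats it only implicitly); the missing idea is the dependent-case quadratic-form limit, i.e.\ Lemma \ref{mainle} or an equivalent.
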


The proof of this lemma needs  RMT, thus we present it in Section \ref{proofsle} for understandability. 

\begin{remark}Note that, taking the AIC for example, this lemma indicates that if  
$\lim \sum_{t=1}^m\log\left(\frac{1-\alpha_{m-t}-c_n}{1-\alpha_{m-t}}\right)+2mc_n>0$, for all  $\bbj\in\bbJ_+$ satisfying $k_{\bbj}-k_{\bbj_*}=m$ and  sufficiently large  $p$ and $n$, then $\bbj$ almost surely cannot be selected by the AIC. On the other hand, if  
$ \lim\sum_{t=1}^m\log\left(\frac{1-\alpha_{m-t}-c_n}{1-\alpha_{m-t}}\right)+2mc_n<0$, then for sufficiently large  $p$ and $n$, $\bbj_*$ almost surely cannot be selected by the AIC, which means that, in this case, the AIC is almost surely inconsistent. The BIC and $C_p$  are analogous.
\end{remark}

Next, we consider the underspecified case,  i.e., $\bbj\in \bbJ_-$. Analogously, 
we have 
$$
	\frac1n(A_{\bbj}-A_{\bbj_*})=\sum_{t=-s}^{m-1} {\mathcal A}_{-t},$$
	$$
	\frac1n(B_{\bbj}-B_{\bbj_*})=\sum_{t=-s}^{m-1} {\mathcal B}_{-t}
	~~\mbox{and}~~
\frac1n(C_{\bbj}-C_{\bbj_*})=\sum_{t=-s}^{m-1} {\mathcal C}_{-t},
$$
where 
\bqn
{\mathcal A}_{-t}=\begin{cases}
\log({|\widehat\bgS_{\bbj_{-t}}|})-\log({|\widehat\bgS_{\bbj_{-t-1}}|})+2c_n&\mbox{if } t\geq0,\\
	\log({|\widehat\bgS_{\bbj_{-t}}|})-\log({|\widehat\bgS_{\bbj_{-t-1}}|})-2c_n&\mbox{if } t<0,
\end{cases}
\eqn
\bqn
{\mathcal B}_{-t}=\begin{cases}\log({|\widehat\bgS_{\bbj_{-t}}|})-\log({|\widehat\bgS_{\bbj_{-t-1}}|})+\log(n)c_n&\mbox{if } t\geq0,\\
	\log({|\widehat\bgS_{\bbj_{-t}}|})-\log({|\widehat\bgS_{\bbj_{-t-1}}|})-\log(n)c_n&\mbox{if } t<0,
\end{cases}
\eqn
and 
\bqn
{\mathcal C}_{-t}=\begin{cases}
(1-\alpha_k)\rtr[\widehat\bgS_{{\bm \omega}}^{-1}(\widehat\bgS_{\bbj_{-t}}-\widehat\bgS_{\bbj_{-t-1}})]+2c_n&\mbox{if } t\geq0,\\
(1-\alpha_k)\rtr[\widehat\bgS_{{\bm \omega}}^{-1}(\widehat\bgS_{\bbj_{-t}}-\widehat\bgS_{\bbj_{-t-1}})]-2c_n, &\mbox{if } t<0.
\end{cases}
\eqn
Since $\bbj_0\supset \bbj_*$, it from Lemma \ref{le1} that, 
\bqn
\sum_{t=0}^{m-1}{\mathcal A}_{-t}- \sum_{t=1}^m\log\left(\frac{1-\alpha_{m+s-t}-c_n}{1-\alpha_{m+s-t}}\right)-2mc_n=o_{a.s.}(m),
\eqn
\bqn
\sum_{t=0}^{m-1}{\mathcal B}_{-t}-\sum_{t=1}^m\log\left(\frac{1-\alpha_{m+s-t}-c_n}{1-\alpha_{m+s-t}}\right)-
mc_n\log(n)=o_{a.s.}(m)
\eqn
and 
\bqn
\sum_{t=0}^{m-1}{\mathcal C}_{-t}-m\psi(\alpha,c)=o_{a.s.}(m).
\eqn
Therefore, we need to consider only the case in which $t<0$ and some additional notation.
Let  $\bfell_t=\{i_{1},\cdots, i_{-t}\}$,  we denote
	\begin{align*}
\gD_t :=&  (\bbX'_{\bfell_t}\bbQ_{\bbj_t}\bbX_{\bfell_t})^{1/2}\Theta_{\bfell_t}\Theta'_{\bfell_t}(\bbX'_{\bfell_t}\bbQ_{\bbj_t}\bbX_{\bfell_t})^{1/2};\\
\tilde\bba_t':=&\bba_t'\bbQ_{\bbj_t}\bbX_{\bfell_t}(\bbX_{\bfell_t}'\bbQ_{\bbj_{t}}\bbX_{\bfell_t})^{-1/2};\\
  \delta_t:=&\tilde\bba_t'\left(({1-\alpha_m})\bbI+{n^{-1}\gD_t}\right)^{-1}\tilde\bba_t;\\
  \eta_t:= &\frac1n\tilde\bba_t'\gD_t\tilde\bba_t=\frac1n\bba_t'\bbQ_{\bbj_t}\bbX_{\bfell_t}\Theta_{\bfell_t}\Theta'_{\bfell_t}\bbX_{\bfell_t}'\bbQ_{\bbj_t}\bba_t.
\end{align*}
By basic calculation, we obtain that $\delta_t$ and $\eta_t$ can both be expressed as functions of the noncentrality matrix $\Phi_{\bbj}$, as follows:
 \begin{align*}
  \prod_{l=1}^{-t}\tilde \bba_{l}'((1-\alpha_m)\bbI+\frac1n\Delta_l)^{-1}\tilde \bba_l=(1-\alpha_m)^{p+t}|(1-\alpha_m)\bbI+\Phi_{\bbj}|^{-1}
\end{align*} and 
\begin{align*}
   \sum_{l=1}^{-t}\eta_l=\frac1n\tr[\Theta'_{\bbj_*}\bbX_{\bbj_*}'(\sum_{l=1}^{-t}\bba_l\bba_l')\bbX_{\bbj_*}\Theta_{\bbj_*}]=\tr(\Phi_{\bbj}).
\end{align*}
Then, we have the following lemma.
\begin{lemma}\label{le2} 
Suppose assumptions {\rm (A1) through (A5)} hold.  If  $\bbj\in \bbJ_-$  and $t<0$,  then we have 
\begin{align}\label{le2_eq1}
{\mathcal A}_{-t}+\log({\delta_t})+\log({1-\alpha_{m}-c_n})+2c_n=o_{a.s.}(1),
\end{align}
\begin{align}\label{le2_eq3}
{\mathcal B}_{-t}+\log({\delta_t})+\log({1-\alpha_{m}-c_n})+c_n \log(n)=o_{a.s.}(1),
\end{align}
and 
\begin{align}\label{le2_eq2}
{\mathcal C}_{-t}-\frac{(1-\alpha_k)(\eta_t+c_n)}{1-c_n-\alpha_k}+2c_n=o_{a.s.}(1).
\end{align}
\end{lemma}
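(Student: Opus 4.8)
The plan is to treat the three displays in parallel, reducing each single-step difference to a scalar quadratic form in the inverse of a \emph{pure-noise} sample covariance and then evaluating that form by random-matrix deterministic equivalents. Since for $t<0$ the two models entering ${\mathcal A}_{-t}$ and ${\mathcal B}_{-t}$ differ by exactly one index, I would use the rank-one update of the residual projection (cf. \eqref{Pat}, \eqref{Qat}) to write $n\widehat\bgS_{\bbj_{-t}} = n\widehat\bgS_{\bbj_{-t-1}} + \bbw\bbw'$ with $\bbw := \bbY'\bba_t$, so that the matrix determinant lemma gives
$${\mathcal A}_{-t} = \log\!\big(1 + \bbw'(n\widehat\bgS_{\bbj_{-t-1}})^{-1}\bbw\big) - 2c_n,$$
and the identical expression with $-2c_n$ replaced by $-c_n\log n$ for ${\mathcal B}_{-t}$. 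For ${\mathcal C}_{-t}$ the same rank-one identity yields $\widehat\bgS_{\bbj_{-t}}-\widehat\bgS_{\bbj_{-t-1}}=\tfrac1n\bbw\bbw'$ and hence the simpler ${\mathcal C}_{-t} = (1-\alpha_k)\,\bbw'(n\widehat\bgS_{\bm\omega})^{-1}\bbw - 2c_n$. Thus everything is governed by quadratic forms of $\bbw$ against $(n\widehat\bgS_{\bbj_{-t-1}})^{-1}$ and $(n\widehat\bgS_{\bm\omega})^{-1}$.

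Next I would split $\bbw = \bbs + \bbg$, where $\bbs:=\Theta_{\bbj_*}'\bbX_{\bbj_*}'\bba_t$ is deterministic and $\bbg:=\bbE'\bba_t$; because $\bba_t$ is orthogonal to the columns already present in the smaller model, $\bbs = \Theta_{\bfell_t}'\bbX_{\bfell_t}'\bba_t$ and $\|\bbs\|^2 = n\eta_t$. The key structural fact is that $\bbj_0\supset\bbj_*$, so $\bbB:=\bbE'\bbQ_{\bbj_0}\bbE = n\widehat\bgS_{\bbj_0}$ carries no signal, and only finitely many ($\le k_*$, by (A1)) true directions $\bba_{(1)},\dots,\bba_{(r)}$ (with $r=-t$) separate $\bbj_0$ from the current models. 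Hence $n\widehat\bgS_{\bbj_{-t-1}} = \bbB + \bbW\bbW'$ for a $p\times(r-1)$ block $\bbW$ whose columns are the earlier $\bbw_{(l)}=\bbs_{(l)}+\bbg_{(l)}$, while $n\widehat\bgS_{\bm\omega}=\bbE'\bbQ_{\bm\omega}\bbE$ is again pure noise (and $\bba_t\perp\mathrm{range}(\bbQ_{\bm\omega})$ since $i_t\in\bm\omega$). As the $\bba_{(l)}$ are orthonormal and orthogonal to $\mathrm{range}(\bbQ_{\bbj_0})$, the noise parts $\bbg_{(l)}$ are asymptotically decoupled from $\bbB$ (exactly independent in the Gaussian case).

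I would then invoke the deterministic equivalents for the pure-noise Wishart matrices: $\tr\bbB^{-1}\to c_n/(1-\alpha_m-c_n)$, $\bbg_{(a)}'\bbB^{-1}\bbg_{(a)}\to\tr\bbB^{-1}$, the cross terms $\bbg_{(a)}'\bbB^{-1}\bbg_{(b)}$ and $\bbg_{(a)}'\bbB^{-1}\bbs_{(b)}$ ($a\neq b$) vanish, and $\bbs_{(a)}'\bbB^{-1}\bbs_{(b)}\to \tfrac{\tr\bbB^{-1}}{p}\,\bbs_{(a)}'\bbs_{(b)}$. Feeding these into a Woodbury expansion of $\bbw'(\bbB+\bbW\bbW')^{-1}\bbw$, the constant contributions collapse (using $1+\tr\bbB^{-1}=(1-\alpha_m)/(1-\alpha_m-c_n)$) and the finite low-rank correction organizes into a Schur complement of the fixed $r\times r$ matrix $(1-\alpha_m)\bbI_r+\bbD_r$, $\bbD_r=(\tfrac1n\bbs_{(a)}'\bbs_{(b)})_{a,b\le r}$. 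Writing $\bbs_{(a)}=\Theta_{\bfell_t}'\bbX_{\bfell_t}'\bba_{(a)}$ and using orthonormality of the $\bba_{(l)}$ gives $\bbD_r=\bbO'(n^{-1}\gD_t)\bbO$ for an orthogonal $\bbO$ whose last column is $\tilde\bba_t$, so the $(r,r)$ entry of $((1-\alpha_m)\bbI_r+\bbD_r)^{-1}$ is exactly $\delta_t$. Altogether $1+\bbw'(n\widehat\bgS_{\bbj_{-t-1}})^{-1}\bbw\to 1/[(1-\alpha_m-c_n)\delta_t]$, which on taking logs is \eqref{le2_eq1}--\eqref{le2_eq3}; applying the same equivalents to the pure-noise $n\widehat\bgS_{\bm\omega}$ gives $\bbw'(n\widehat\bgS_{\bm\omega})^{-1}\bbw\to(\eta_t+c_n)/(1-\alpha_k-c_n)$, hence \eqref{le2_eq2}.

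The hard part is the random-matrix input at the hard edge $z=0$: controlling $\bbB^{-1}$ and the bilinear forms $\bbs_{(a)}'\bbB^{-1}\bbs_{(b)}$ almost surely requires both a smallest-eigenvalue bound (Bai--Yin, which is where the finite fourth moment in (A2) enters) and an (an)isotropic deterministic equivalent for the resolvent evaluated at the edge of the support. A secondary subtlety is that for non-Gaussian errors $\bbg_{(l)}$ and $\bbB$ are only uncorrelated, not independent; I would circumvent this either by recasting the diagonal quadratic forms as log-determinant differences of genuine noise Wisharts, or by invoking the martingale/truncation concentration estimates collected in the Appendix. The finite-dimensional Woodbury--Schur bookkeeping that pins the limit to $\delta_t$ and $\eta_t$ is exact in the limit precisely because $k_*$, and hence $r$, is fixed.
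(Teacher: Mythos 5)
Your proposal is correct, and it takes a genuinely different route from the paper's own proof, while resting on the same random-matrix input. The paper works with the rank-one \emph{downdate} form ${\mathcal A}_{-t}=-\log\big(1-\bba_{t}'\bbY(\bbY'\bbQ_{\bbj_{-t}}\bbY)^{-1}\bbY'\bba_{t}\big)-2c_n$, converts it into an $n\times n$ resolvent via the in-out-exchange formula \eqref{in-out}, completes $\bbQ_{\bbj_{-t}}\bbX_{\bfell_t}(\bbX_{\bfell_t}'\bbQ_{\bbj_{-t}}\bbX_{\bfell_t})^{-1/2}$ to an orthogonal matrix, and strips the signal off by block inversion, evaluating everything at complex $z$ through Lemma \ref{mainle} and only letting $z\downarrow 0$ at the very end. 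You instead anchor all underspecified models to the single pure-noise matrix $\bbB=\bbE'\bbQ_{\bbj_0}\bbE$ (legitimate because $\bbj_0\supset\bbj_*$ and $k_*$ is fixed), write $n\widehat\bgS_{\bbj_{-t-1}}=\bbB+\bbW\bbW'$ as a finite-rank perturbation, and do the bookkeeping by Woodbury/Schur complements; your identification of the limiting Schur complement with $1/[(1-\alpha_m-c_n)\delta_t]$, through $\bbD_r=\bbO'(n^{-1}\gD_t)\bbO$ with $\bbO$ orthogonal and last column $\tilde\bba_t$ (which uses $\bbQ_{\bbj_{-t}}\bba_{(l)}=\bba_{(l)}$ for all $l\le -t$), is exactly right, and your $C_p$ computation coincides with the paper's $J_{1t}+J_{2t}+J_{2t}'+J_{3t}$ split. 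What your organization buys is transparency and uniformity: the low-rank algebra is cleanly separated from the randomness, $\delta_t$ and $\eta_t$ emerge with no block-matrix gymnastics, and the same three deterministic equivalents drive \eqref{le2_eq1}, \eqref{le2_eq3} and \eqref{le2_eq2} at once. What it does not buy is any saving on the hard analysis: the quadratic-form limits you invoke, namely $\bbg_{(a)}'\bbB^{-1}\bbg_{(b)}$, $\bbs_{(a)}'\bbB^{-1}\bbg_{(b)}$ and $\bbs_{(a)}'\bbB^{-1}\bbs_{(b)}$ at $z=0$, are precisely assertions \eqref{le3.1.3}, \eqref{le3.1.2} and \eqref{le3.1.1} of Lemma \ref{mainle} (whose statement covers the projection $\bbQ_{\bbj_0}$), together with the hard-edge/smallest-eigenvalue control and the non-Gaussian decoupling supplied in its appendix proof; you correctly flag both as the substantive difficulty, and the paper's complex-$z$-then-limit strategy is exactly its way of keeping resolvent bounds trivial until the end. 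Two details to tighten: the signal-noise cross terms $\bbs_{(a)}'\bbB^{-1}\bbg_{(b)}$ must vanish for \emph{all} pairs, including $a=b$ (your parenthetical ``$a\neq b$'' should attach only to the noise-noise terms); and since $\|\bbs_{(a)}\|^2=n\eta$ is of order $n$ rather than $O(1)$, applying the equivalents to $\bbs_{(a)}'\bbB^{-1}\bbs_{(b)}$ requires normalizing these vectors and invoking (A5) so that the rescaled error remains $o_{a.s.}(1)$.
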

The proof of this lemma is  presented in Section  \ref{proofsle}. By combining Lemmas \ref{le1} and \ref{le2},  the following lemma is straightforward.
\begin{lemma}\label{le3} Suppose that assumptions {\rm (A1) through (A5)} hold. For all underspecified models $\bbj$ with $k_{\bbj_-}=s>0$ and $
k_{\bbj_+}=m\geq0$,  
  we have 
\begin{align*}
\frac1n(A_{\bbj}-A_{\bbj_*})=&\sum_{t=1}^m\log\left(\frac{1-\alpha_{m+s-t}-c}{1-\alpha_{m+s-t}}\right)\\
&+\log(\tau_{\bbj})-s\log({1-\alpha_{m}-c})+2(m-s)c+o_{a.s.}(m),
\end{align*}
\begin{align*}
\frac1n(B_{\bbj}-B_{\bbj_*})=&\sum_{t=1}^m\log\left(\frac{1-\alpha_{m+s-t}-c}{1-\alpha_{m+s-t}}\right)\\
&+\log(\tau_{\bbj})-s\log({1-\alpha_{m}-c})+(m-s)c\log(n)+o_{a.s.}(m)
\end{align*}
and
\begin{align*}
\frac1n(C_{\bbj}-C_{\bbj_*})=m\psi(\alpha,c)+\frac{(1-\alpha)(\kappa_{\bbj}+sc)}{1-c-\ga}-2sc+o_{a.s.}(m).
\end{align*}
\end{lemma}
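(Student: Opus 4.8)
The plan is to assemble the three claimed identities directly from Lemmas \ref{le1} and \ref{le2} together with the two algebraic identities for $\prod_l\delta_l$ and $\sum_l\eta_l$ recorded just before Lemma \ref{le2}; no new probabilistic input is required, which is why the authors call it straightforward. Fix an underspecified $\bbj\in\bbJ_-$ with $k_{\bbj_+}=m\geq0$ and $k_{\bbj_*\cap\bbj_-^c}=s>0$, and start from the telescoping decomposition $\frac1n(A_{\bbj}-A_{\bbj_*})=\sum_{t=-s}^{m-1}{\mathcal A}_{-t}$ (and its $B$ and $C$ analogues) established above. I would split each sum at $t=0$ into an overspecified block $\sum_{t=0}^{m-1}$, running along $\bbj_0\supset\cdots\supset\bbj_{-m}=\bbj_*$ with $\bbj_0\supset\bbj_*$, and an underspecified block $\sum_{t=-s}^{-1}$, running along $\bbj=\bbj_s\subset\cdots\subset\bbj_0$.

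For the overspecified block I would simply quote the three displayed consequences of Lemma \ref{le1} stated right after the definitions of ${\mathcal A}_{-t},{\mathcal B}_{-t},{\mathcal C}_{-t}$: these supply $\sum_{t=1}^m\log\big((1-\alpha_{m+s-t}-c_n)/(1-\alpha_{m+s-t})\big)+2mc_n+o_{a.s.}(m)$ for the AIC, the same with $mc_n\log n$ for the BIC, and $m\psi(\alpha,c)+o_{a.s.}(m)$ for the $C_p$. For the underspecified block I would feed each summand with $t<0$ through \eqref{le2_eq1}, \eqref{le2_eq3}, \eqref{le2_eq2}, writing ${\mathcal A}_{-t}=-\log\delta_t-\log(1-\alpha_m-c_n)-2c_n+o_{a.s.}(1)$, the BIC version with $-c_n\log n$, and ${\mathcal C}_{-t}=\frac{(1-\alpha_k)(\eta_t+c_n)}{1-c_n-\alpha_k}-2c_n+o_{a.s.}(1)$, then summing over the $s$ indices $t=-1,\dots,-s$.

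The one genuine computation is to collapse $\sum_{t=-s}^{-1}\log\delta_t$ and $\sum_{t=-s}^{-1}\eta_t$. Taking $-t=s$ in the product identity $\prod_{l=1}^{-t}\tilde\bba_l'((1-\alpha_m)\bbI+\frac1n\Delta_l)^{-1}\tilde\bba_l=(1-\alpha_m)^{p+t}|(1-\alpha_m)\bbI+\Phi_{\bbj}|^{-1}$ and recognizing the left-hand factors as the $\delta_l$, I get $\sum_{t=-s}^{-1}\log\delta_t=(p-s)\log(1-\alpha_m)-\log|(1-\alpha_m)\bbI+\Phi_{\bbj}|=-\log\tau_{n\bbj}$ straight from the definition $\tau_{n\bbj}=(1-\alpha_m)^{s-p}|(1-\alpha_m)\bbI+\Phi_{\bbj}|$; hence $-\sum_{t<0}\log\delta_t\to\log\tau_{\bbj}$. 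Likewise the trace identity gives $\sum_{t=-s}^{-1}\eta_t=\tr(\Phi_{\bbj})=\kappa_{n\bbj}\to\kappa_{\bbj}$. Substituting these yields the underspecified contributions $\log\tau_{\bbj}-s\log(1-\alpha_m-c)-2sc+o_{a.s.}$ for the AIC, the same with $-sc\log n$ for the BIC, and $\frac{(1-\alpha)(\kappa_{\bbj}+sc)}{1-c-\alpha}-2sc+o_{a.s.}$ for the $C_p$.

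Adding the two blocks and combining the constants (e.g.\ $2mc-2sc=2(m-s)c$) reproduces the three stated formulas. The remaining bookkeeping is to pass from $c_n,\alpha_m,\alpha_k,\alpha_{m+s-t}$ to their limits $c,\alpha$ inside the finitely many summands; here I would use assumption (A1), namely that $s\leq k_*$ and $k_*$ are bounded, so $s/n\to0$ makes these parameters interchangeable up to $o(1)$ and a sum of $s=O(1)$ error terms each $o_{a.s.}(1)$ remains $o_{a.s.}(m)$. The main (and essentially only) obstacle I anticipate is ensuring these $o_{a.s.}$ bounds hold \emph{uniformly} over all $\bbj\in\bbJ_-$ at once, so that the identities are valid for every underspecified model simultaneously almost surely; this uniformity is already carried by the $o_{a.s.}$ statements of Lemmas \ref{le1} and \ref{le2}, so careful invocation of them is all that is needed.
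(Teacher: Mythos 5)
Your proposal is correct and is essentially the paper's own argument: the paper disposes of Lemma \ref{le3} in one sentence (``by combining Lemmas \ref{le1} and \ref{le2}, the following lemma is straightforward''), and your write-up is exactly that combination --- split the telescoping sum at $t=0$, apply Lemma \ref{le1} to the overspecified block and Lemma \ref{le2} to the $s$ underspecified terms, then collapse $\sum_{t<0}\log\delta_t=-\log\tau_{n\bbj}$ and $\sum_{t<0}\eta_t=\kappa_{n\bbj}$ via the two identities recorded just before Lemma \ref{le2}. The uniformity point you flag at the end is handled where the paper handles it, inside the proofs of Lemmas \ref{le1} and \ref{le2} (the exceptional null set there is constructed independently of the model $\bbj$), so your invocation of those lemmas is legitimate.
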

\begin{remark}
	Lemmas \ref{le1} through \ref{le3} are the fundamental results for the model selection criteria discussed herein. These lemmas have numerous potential applications in multivariate analysis problems, such as the
growth curve model \citep{EnomotoS15C, FujikoshiE13H},
multiple discriminant analysis \citep{Fujikoshi83C,FujikoshiS16H},
principal component analysis \citep{FujikoshiS16S,BaiC18C}, and canonical correlation analysis \citep{NishiiB88S,BaoH18C}.
\end{remark}
Now, with the aid of Lemma \ref{le1} and Lemma \ref{le3},  we  prove
Theorems \ref{th1}, \ref{th2}, \ref{newthm} and \ref{gen_kooth}.

\subsection{Proof of Theorem \ref{th1}}
We first prove (1). For the strong consistency of  AIC method,  it is sufficient  to prove for all $\bbj\neq \bbj_*$, $A_{\bbj}>A_{\bbj_*}$ almost surely for large enough $p$ and $n$. 
When  $\bbj\in\bbJ_+$ with $k_{\bbj}-k_{\bbj_*}=m$,
	from the definition  of $\phi(\alpha,c)$, we know that if 
	$\phi(\alpha,c)>0$, then we have  $\log(1-c)+2c>0$,  and for any $\alpha_m$ satisfying $0<\alpha_m\leq \alpha$, we have $\phi(\alpha_m,c)>0$. In addition, if $\alpha_m\to0$, by \eqref{le1_eq1}, we have 
	\begin{align*}
  \frac1{nm}(A_{\bbj}-A_{\bbj_*})\asto\log(1-c)+2c>0.
\end{align*}
Thus, according to Lemma \ref{le1}, if $\phi(\alpha,c)>0$, AIC method cannot be asymptoticly over-specified, i.e., for  any $\bbj\in\bbJ_+$ and for sufficiently large $p$ and $n$,
\begin{align*}
  A_{\bbj}>A_{\bbj_*} ~~~a.s..
\end{align*}
For the case of $\bbj\in \bbJ_-$ with $k_{\bbj_+}=m\geq0$ and        $k_{\bbj_*\cap\bbj_-^c}=s>0$, note that $s< k_*$,  $\Phi_{\bbj}>0$ and $\log(\tau_{n\bbj})>s\log(1-\alpha_m)$ uniformly. If $m\geq s$, by Lemma \ref{le3} and when $\phi(\alpha,c)>0$,   \begin{align*}
   &\frac1n(A_{\bbj}-A_{\bbj_*})\\
   =&\sum_{t=1}^m\log\left(\frac{1-\alpha_{m-t}-c}{1-\alpha_{m-t}}\right)\\
&+\log(\tau_{\bbj})-s\log({1-\alpha_{m}-c})+2(m-s)c+o_{a.s.}(m)\\
=&\sum_{t=s+1}^m\log\left(\frac{1-\alpha_{m-t}-c}{1-\alpha_{m-t}}\right)
+\log(\tau_{\bbj}/(1-\alpha_m))+2(m-s)c+o_{a.s.}(m),
\end{align*}
which is  almost surely positive for sufficiently large $p$ and $n$. Thus, we only need to consider  the case in which $m< s$. In this case, since $k_*$ is fixed, $\alpha_m=m/n\to0$ and  $\tau_{\bbj}>1$. Then, under the condition $\log(\tau_{\bbj})>(s-m)(\log(1-c)+2c)$, we have that 
\begin{align*}
  \frac1n(A_{\bbj}-A_{\bbj_*})\asto\log(\tau_{\bbj})-(s-m)(\log(1-c)+2c)>0, 
\end{align*}
 which together with the above discussion implies the variable selection method based on the AIC is strongly consistent. 

If for some  $\bbj\in \bbJ_-$ with $m-s<0$,  $\log(\tau_{\bbj}^0)< (s-m)(\log(1-c)+2c)$, then from Lemmas \ref{le1} and \ref{le3}, we know that  for sufficiently large $p$ and $n$,  
\begin{align*}
  A_{\bbj}< A_{\bbj_*}, ~~~a.s.,
\end{align*}
which means that, in this case, the AIC asymptotically selects the under-specified model $\bbj$. On the other hand, 
 the condition $\phi(\alpha,c)>0$ guarantees  that the AIC cannot asymptotically selects over-specified models. Thus, the AIC asymptotically selects an under-specified model.

If $\phi(\alpha,c)<0$,  for any $\bbj\in  \bbj_*\bigcup\bbJ_-$ with $k_{\bbj_+}=m\geq0$ and $k_{\bbj_*\cap\bbj_-^c}=s\geq0$   we have that
\begin{align*}
  &\frac1n(A_{\bbj}-A_{{\bm \omega}})=\sum_{t=1}^m\log\left(\frac{1-\alpha_{m-t}-c_n}{1-\alpha_{m-t}}\right)-\sum_{t=1}^{k-k_*}\log\left(\frac{1-\alpha_{k-k_*-t}-c_n}{1-\alpha_{k-k_*-t}}\right)\\
&+\log(\tau_{\bbj})-s\log({1-\alpha_{m}-c_n})+2(m-s-k+k_*)c_n+o_{a.s.}(k-m)\\
=& \log(\frac{\tau_{\bbj}}{(1-\alpha_{m})^s})-s\left(\log\left(\frac{1-\alpha_{m}-c_n}{1-\alpha_{m}}\right)+2c\right)\\
&-\sum_{t=m+1}^{k-k_*}\log\left(\frac{1-\alpha_{k-k_*-t}-c_n}{1-\alpha_{k-k_*-t}}\right)-2(k-k_*-m)c_n+o_{a.s.}(k-m). 
\end{align*}
Note that $\log(\frac{\tau_{\bbj}}{(1-\alpha_{m})^s})>0$, $\log(\frac{1-\alpha-c}{1-\alpha})+2c<0$ and $\sum_{t=m+1}^{k-k_*}\log(\frac{1-\alpha_{k-k_*-t}-c_n}{1-\alpha_{k-k_*-t}})+2(k-k_*-m)c_n$ is negative with order of $k-m$, which indicate 
$\frac1n(A_{\bbj}-A_{{\bm \omega}})>0 $ almost surely for sufficiently large $p$ and $n$. Thus, the AIC asymptotically selects an over-specified model. Thus, we obtain (1).

For (2), by \eqref{le1_eq3}, it is easy to find that  BIC method cannot be asymptoticly over-specified.  In addition, under assumption (A5), 
for any $\bbj\in \bbJ_-$ with $k_{\bbj_+}=m\geq0$,       $k_{\bbj_*\cap\bbj_-^c}=s>0$, and $m< s$, by Lemma \ref{le3}, we obtain that
\begin{align}\label{Btau}
  \frac1n(B_{\bbj}-B_{\bbj_*})\asto\log(\tau_{\bbj})-(s-m)(\log(1-c)+\log (n)c)<0,
\end{align}
 which implies the conclusion (2).

The proof of  (3) is analogous with (1);  thus, the details are not presented here. Then we    complete the proof of this theorem.
 
\subsection{Proof of Theorem \ref{th2}} 
 By the same proof procedure of Theorem \ref{th1}, if the assumption (A5') holds, we know that AIC and $C_p$ methods cannot choose the under-specified models almost surely. Thus, if $\phi(\alpha,c)>0$ ($\psi(\alpha,c)>0$, resp.), AIC ($C_p$, resp.) is strongly consistent; Otherwise,   AIC ($C_p$, resp.) is almost surely over-specified. 
 In addition, from \eqref{Btau} we can easily obtain the conclusion (2) in Theorem \ref{th2}. Then, we complete the proof.

\subsection{Proof of Theorem \ref{newthm}}
We consider only the case in which, under assumptions (A1) through (A5), the other cases are analogous. 
If $j$ does not exist in the true model $\bbj_*$, then ${\bm \omega} \backslash j$ includes $\bbj_*$. By Lemma \ref{le1}, we obtain 
 \begin{gather}\label{tildeA1}
  \tilde{A}_{j}\asto \log\Big(\frac{1-\alpha}{1-\alpha-c}\Big)-2c,\\
    \tilde{B}_{j}+\log(n)c\asto \log\Big(\frac{1-\alpha}{1-\alpha-c}\Big),~~~
        \tilde{C}_{j}\asto \frac{(1-\alpha)c}{1-\alpha-c}-2c.\nonumber\end{gather}
If  $j$    lies in the true model $\bbj_*$, by Lemmas \ref{le3}, we have that 
\begin{gather}\label{tildeA2}
  \tilde{A}_{j}\asto\log(\tau_{{\bm \omega}\backslash j})-\log(1-\alpha-c)-2c,\\
    \tilde{B}_{j}\asto\log(\tau_{{\bm \omega}\backslash j})-\log(1-\alpha-c)-\log(n)c,~~
    \tilde{C}_{j}\asto\frac{(1-\alpha)(\kappa_{{\bm \omega}\backslash j}+c)}{1-\ga-c}-2c.\nonumber
\end{gather}
Thus, we complete the proof based on a discussion similar to that for the proof of Theorem \ref{th1}.

\subsection{Proof of Theorem  \ref{gen_kooth}}
Since the rank of matrix $\Phi_{_{{\bm \omega}\backslash j}}$ is one, we have 
	 \begin{align*}
  \log((1-\ga)^{1-p}|(1-\ga)\bbI+\Phi_{{{\bm \omega}\backslash j}}|)=\log(1-\ga+\tr(\Phi_{{{\bm \omega}\backslash j}})),
\end{align*}
which, together with \eqref{tildeA1} and \eqref{tildeA2}, directly implies this theorem. Thus, we complete the proof.

\section{Proofs of Lemmas \ref{le1} and  \ref{le2}}\label{proofsle}
In this section, we present the technical proofs of Lemma \ref{le1} and Lemma \ref{le2}.  We first briefly describe our proof strategy and the main tools
 of RMT. 
Recall equations \eqref{nhatS} and  \eqref{Qat}, from Sylvester's determinant theorem, we obtain that
\begin{align}\label{det_eq}
	|n\widehat\bgS_{\bbj_{-t}}|
=&|\bbY'\bbQ_{\bbj_{-t}}\bbY|=|\bbY'\bbQ_{\bbj_{-t-1}}\bbY-\bbY'\bba_{t+1}\bba_{t+1}'\bbY|\\
=&|\bbY'\bbQ_{\bbj_{-t-1}}\bbY|(1-\bba_{t+1}'\bbY(\bbY'\bbQ_{\bbj_{-t-1}}\bbY)^{-1}\bbY'\bba_{t+1})\nonumber\\
=&|n\widehat\bgS_{\bbj_{-t-1}}|(1-\bba_{t+1}'\bbY(\bbY'\bbQ_{\bbj_{-t-1}}\bbY)^{-1}\bbY'\bba_{t+1}).\nonumber
\end{align}
Thus, to prove Lemma \ref{le1} and Lemma \ref{le2},  we  need to obtain only the almost sure limits of  $\bba_{t}'\bbY(\bbY'\bbQ_{\bbj_{-t}}\bbY)^{-1}\bbY'\bba_{t}$ or similar expressions  with different $\bbj_{-t}$. The proof strategy is that,  we first define a function
\begin{align*}
\hbar_n(z):= n^{-1} \bba_{t}'\bbY(n^{-1}\bbY'\bbQ_{\bbj_{-t}}\bbY-z\bbI)^{-1}\bbY'\bba_{t}:\mathbb{C}^{+}\longmapsto \mathbb{C}^{+},
\end{align*}
where $\mathbb{C}^{+}=\{z\in\mathbb{C}^{+}:\Im z>0\}$. 
Next,  we prove that outside a null set independent of $\bbj_{-t}$,  for every $z\in\mathbb{C}^{+}$, $\hbar_n(z)$ has a limit $\hbar(z)\in \mathbb{C}^{+}$. Note that by Vitali's convergence theorem (see, e.g.,  Lemma 2.14 in \citep{BaiS10S}) it is sufficient to prove for any fixed $z\in\mathbb{C}^{+}$, $\hbar_n(z)\asto\hbar(z)$. Finally, we let $z\downarrow0+0i$ and obtain almost surely $\hbar_n(0)\to \hbar(0)$. 

We remark that this proof  approach  is common in RMT to obtain the limiting special distribution (LSD) of random matrices. Thus, the present paper can be viewed as an application of RMT in multivariate statistical analysis.  
Moreover, since the type of  matrix $\bbY(\bbY'\bbQ_{\bbj_{-t}}\bbY)^{-1}\bbY'$ is special, and to the best of our knowledge,  no  known conclusions in RMT can be applied directly to obtain the limit of $\bba_{t}'\bbY(\bbY'\bbQ_{\bbj_{-t}}\bbY)^{-1}\bbY'\bba_{t}$, we have to derive some new theoretical results for our theorems. 

\subsection{An auxiliary lemma}
 We introduce some basic   results from RMT and an auxiliary lemma before proving Lemmas \ref{le1} and  \ref{le2}.
 For  any $n\times n$ matrix $\mathbf{A}_n$ with only positive  eigenvalues,  let $F^{\mathbf{A}_n}$  be the empirical  spectral distribution function  of $\mathbf{A}_n$, that is,
 $$F^{\mathbf{A}_n}(x)=\frac{1}{n}\#\{\lambda_i^{\mathbf{A}_n}\leq x\},$$
 where  $\lambda_i^{\mathbf{A}_n}$ denotes the $i$-th largest eigenvalue of $\mathbf{A}_n$ and $\#\{\cdot\}$ denotes the cardinality of the set $\{\cdot\}$.
If $F^{\mathbf{A}_n}$ has a limit distribution $F$, then we  call it the LSD of  sequence $\{\mathbf{A}_n\}$.
For any function of bounded variation $G$ on the real line, its Stieltjes transform is defined by
 $$s_G(z)=\int\frac{1}{\lambda-z}dG(\lambda),~~z\in\mathbb{C}^{+}.$$
 If matrix $\bbA$ is invertible and for any $p\times n$ matrix $\bbC$, the following formulas will be used frequently,
 \begin{align}\label{inv1}
(\bbA-\bbC\bbC')^{-1}&=\bbA^{-1}+\bbA^{-1}\bbC(\bbI-\bbC'\bbA^{-1}\bbC)^{-1}\bbC'\bbA^{-1},
 \end{align}
 which  immediately implies 
 \begin{align}\label{inv2}
(\bbA-\bbC\bbC')^{-1}\bbC&=\bbA^{-1}\bbC(\bbI-\bbC'\bbA^{-1}\bbC)^{-1}
\end{align}
\begin{align}\label{inv3}
\bbC'(\bbA-\bbC\bbC')^{-1}&=(\bbI-\bbC'\bbA^{-1}\bbC)^{-1}\bbC'\bbA^{-1}.
 \end{align}
 For any $z\in\mathbb{C}^{+}$, we also have 
 \begin{align}\label{in-out}
 	\bbC(\bbC'\bbC-z\bbI_n)^{-1}\bbC'&=\bbI_p+z(\bbC\bbC'-z\bbI_p)^{-1},
 \end{align}
 which is called the in-out-exchange formula in the sequel.
 The above equations are straightforward to obtain by basic linear algebra theory; thus, we omit the detailed calculations. 
 
A key tool for  the  proofs of  Lemma \ref{le1} and  Lemma \ref{le2} is  the  following lemma, whose proof will be postponed to the Appendix because of the space limit. 
\begin{lemma}\label{mainle} Let $\bbM:=\bbM(z)=p^{-1}\bbE'\bbQ_{\bbj_{-t}}\bbE-z\bbI_p$,  $\bga_1$ and $\bga_2$ be  ${n\times 1}$-vectors, and $\bga_3$ be a  ${p\times 1}$-vector and  assume that $\bga_1$, $\bga_2$, $\bga_3$ are all   bounded in  Euclidean norm. Then, under assumptions {\rm (A1) through (A4)} and for any integer $t$,  we have  that for any $ z\in\mathbb{C}$,
\be\label{le3.1.1}
\bga'_1\bbM^{-1}\bga_2+\frac{\bga'_1\bga_2}{z(1+\underline{s}_t(z)-\frac{1-c_n-\alpha_{m-t}}{ c_nz})}\asto 0,
\ee
\be\label{le3.1.2}
\frac{1}{\sqrt{p}}\bga_1'\bbM^{-1}\bbE'\bga_3\asto 0,
\ee
and
\begin{align}\label{le3.1.3}
\frac{1}{p}&\bga'_1\bbE\bbM^{-1}\bbE'\bga_2\nonumber\\
&+\frac{\bga_{1}'\bga_2}{z(1+\underline{s}_t(z)+\frac{c_n-1+\alpha_{m-t}}{c_nz})}
+\frac{\frac{1}{\underline{s}_t(z)+1}\bga_{1}'\bbQ_{\bbj_t}\bga_{2}}{z^2(1+\underline{s}_t(z)+\frac{c_n-1+\alpha_{m-t}}{c_nz})^2}
\asto 0,
\end{align}
where $\underline{s}_t(z)$  is the Stieltjes transform of  the LSD of  $\frac1p\bbE'\bbQ_{\bbj_{-t}}\bbE$.  
\begin{remark}
	From (1.4) in \citep{SilversteinC95A}, we have that  the Stieltjes transform $\underline{s}_t(z)$ is the unique solution on the upper complex plane to the equation
\begin{align*}
z=-\frac1{\underline{s}_t(z)}+\frac1{c_n}\int\frac{t}{1+t\underline{s}_t(z)}dH(t),
\end{align*}
where 
$H$ is the LSD of $\bbQ_{\bbj_{-t}}$. Thus, we obtain that  $H(\{0\})=\alpha_{m-t}$,  $H(\{1\})=1-\alpha_{m-t}$ and 
\begin{align*}
z=-\frac1{\underline{s}_t}+\frac1{c_n}\frac{1-\alpha_{m-t}}{1+\underline{s}_t},	
\end{align*}
which implies
\begin{align}\label{le3.1.3_2}
\frac{z(1+\underline{s}_t(z)+\frac{c_n-1+\alpha_{m-t}}{c_nz})+\frac{1}{\underline{s}_t(z)+1}}{z^2(1+\underline{s}_t(z)+\frac{c_n-1+\alpha_{m-t}}{c_nz})^2}=\frac{1}{1+
  \underline s_t(z)}-1
\end{align}
and
\bqn
\underline{s}_t(z)=\frac{1-\alpha_{m-t}-c_n-c_nz\pm\sqrt{(1-\alpha_{m-t}+c_n-c_nz)^2-4c_n(1-\alpha_{m-t})}}{2c_nz}.
\eqn
On the basis of the fact that any Stieltjes transform tends to zero as $z\to\infty$, we have \begin{align*}
\underline{s}_t(z)=\frac{1-\alpha_{m-t}-c_n-c_nz+\sqrt{(1-\alpha_{m-t}+c_n-c_nz)^2-4c_n(1-\alpha_{m-t})}}{2c_nz}
\end{align*}
and
\begin{align*}
1-\frac{1}{1+\underline{s}_t(z)}=\frac{1-\alpha_{m-t}+c_n-c_nz+\sqrt{(1-\alpha_{m-t}+c_n-c_nz)^2-4c_n(1-\alpha_{m-t})}}{2(1-\alpha_{m-t}).}
\end{align*}
Letting $z\downarrow0+0i$ and together with \eqref{eqlim1} and $1-\alpha_{m-t}-c_n>0$, we conclude that  
\begin{align}\label{eqli2}
\underline{s}_t(z)\to \frac{c_n}{1-\alpha_{m-t}-c_n}
\end{align}	and
\begin{align}\label{le3.1.4}
z\left(1+\underline{s}_t(z)-\frac{1-c_n-\alpha_{m-t}}{ c_nz}\right) \to-\frac{1-\alpha_{m-t}-c_n}{c_n}.
\end{align}
Here, we have used the fact that when the imaginary part of the square root of a complex number is positive, then its real part has the same sign as the imaginary part; thus,   $$\lim_{z\downarrow0+i0}\sqrt{(1-\alpha_{m-t}+c_n-c_nz)^2-4c_n(1-\alpha_{m-t})}=-|1-\alpha_{m-t}-c_n|.$$ 
\end{remark}
\end{lemma}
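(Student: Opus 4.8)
The plan is to read Lemma \ref{mainle} as a set of anisotropic (bilinear-form) limits for the resolvent of a sample covariance matrix and to prove them by the leave-one-out machinery of RMT together with the identities \eqref{inv1}--\eqref{in-out}. Throughout I condition on $\bbX$, so that $\bbQ_{\bbj_{-t}}$ and the vectors $\bga_1,\bga_2,\bga_3$ are deterministic and bounded in Euclidean norm, and the only randomness is $\bbE$. The first step is to expose the sample-covariance structure of $\bbM$: writing $\bbQ_{\bbj_{-t}}=\bbU\bbU'$ with $\bbU$ an $n\times(n-k_{\bbj_{-t}})$ matrix of orthonormal columns and setting $\widetilde{\bbE}:=\bbU'\bbE$, we have $\bbM=p^{-1}\widetilde{\bbE}'\widetilde{\bbE}-z\bbI_p$, where under (A2) the entries of $\widetilde{\bbE}$ are mean zero, unit variance, uncorrelated, and inherit the finite fourth moment of $\bbE$. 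Thus $\bbM^{-1}$ is the resolvent of a sample covariance matrix of aspect ratio $c_n/(1-\alpha_{m-t})$ with identity population covariance.

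The core engine, which yields \eqref{le3.1.1}, is the deterministic equivalent $\bbM^{-1}\approx b(z)\bbI_p$ in the bilinear sense. I would establish it through the remove-one-row decomposition $\bbM=\bbM_{(a)}+p^{-1}\widetilde{\bbe}_a\widetilde{\bbe}_a'$, using the rank-one form of \eqref{inv1} (Sherman--Morrison) to relate $\bbM^{-1}$ and $\bbM_{(a)}^{-1}$, and then replacing the quadratic form $\widetilde{\bbe}_a'\bbM_{(a)}^{-1}\widetilde{\bbe}_a$ by $\tr\bbM_{(a)}^{-1}\approx p\,s_n(z)$ via the trace lemma (legitimate because $\widetilde{\bbe}_a$ is independent of $\bbM_{(a)}$). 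This closes the self-consistent equation for $s_n(z)=p^{-1}\tr\bbM^{-1}$, whose limit is the $\underline{s}_t(z)$ of the Remark by the Marchenko--Pastur equation of \citep{SilversteinC95A}, and, applied to $\bga_1'\bbM^{-1}\bga_2$, pins down the deterministic scalar as the factor displayed in \eqref{le3.1.1}.

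For the mixed forms \eqref{le3.1.2} and \eqref{le3.1.3}, the key observation is that $\bbM$ depends on $\bbE$ only through $\bbU'\bbE=\widetilde{\bbE}$; I would therefore split $\bbE=\bbQ_{\bbj_{-t}}\bbE+(\bbI-\bbQ_{\bbj_{-t}})\bbE$. The component $(\bbI-\bbQ_{\bbj_{-t}})\bbE$ is uncorrelated with $\bbM$, so after conditioning on $\widetilde{\bbE}$ its contribution is a linear (respectively quadratic) form in fresh entries whose conditional mean is explicit and whose fluctuation, controlled by the trace lemma, is negligible after the stated $p^{-1/2}$ or $p^{-1}$ normalization. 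For the component in the range of $\bbU$ I would convert $\widetilde{\bbE}\bbM^{-1}$ and $\widetilde{\bbE}\bbM^{-1}\widetilde{\bbE}'$ into companion-resolvent quantities through \eqref{inv2}, \eqref{inv3} and the in-out-exchange formula \eqref{in-out}; in particular $p^{-1}\widetilde{\bbE}\bbM^{-1}\widetilde{\bbE}'=\bbI+z\underline{\bbM}^{-1}$ with $\underline{\bbM}:=p^{-1}\widetilde{\bbE}\widetilde{\bbE}'-z\bbI$. This reduces \eqref{le3.1.3} to a projection quadratic form of the type appearing in its statement plus a bilinear form in $\underline{\bbM}^{-1}$ that is handled exactly as in the core engine, while \eqref{le3.1.2} collapses to a cross term that vanishes after the $p^{-1/2}$ scaling.

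Finally, I would upgrade convergence in expectation and probability to the almost-sure statements by a martingale-difference decomposition along the $n-k_{\bbj_{-t}}$ rows of $\widetilde{\bbE}$, bounding the increments with Burkholder's inequality; the finite fourth moment in (A2) is precisely what renders these bounds summable (after a standard truncation), so that Borel--Cantelli applies. I expect the main obstacle to be twofold: in \eqref{le3.1.3}, controlling the simultaneous appearance of $\bbE$ inside and outside the resolvent without corrupting the exact $\bbQ$-dependence of the limit; and, as flagged in the proof strategy preceding the lemma, producing a single exceptional null set valid for all $\bbj$ at once, which requires the moment and martingale bounds to be uniform over the whole family of projections $\{\bbQ_{\bbj_{-t}}\}$ rather than established sequence by sequence.
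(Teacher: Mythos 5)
Your toolkit (leave-one-out resolvent expansion, trace lemma, Burkholder plus Borel--Cantelli, the in-out-exchange formula, and the Silverstein--Choi equation to identify the limit) is the same family of tools the paper uses, but your concrete reduction contains a step that fails under the paper's assumptions. Writing $\bbQ_{\bbj_{-t}}=\bbU\bbU'$ and $\widetilde\bbE=\bbU'\bbE$, you treat $\bbM=p^{-1}\widetilde\bbE'\widetilde\bbE-z\bbI_p$ as a standard sample covariance resolvent and run a leave-one-\emph{row}-out argument on $\widetilde\bbE$, asserting that the removed row $\widetilde\bbe_a$ is independent of $\bbM_{(a)}$. That independence is false in general: assumption (A2) gives i.i.d.\ entries of $\bbE$ with finite fourth moment but \emph{not} normality, and an orthogonal rotation does not preserve independence. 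The rows of $\widetilde\bbE$ are $\bbu_a'\bbE$; any two of them involve the same columns of $\bbE$ and are merely uncorrelated, not independent, so both the trace-lemma step $\widetilde\bbe_a'\bbM_{(a)}^{-1}\widetilde\bbe_a\approx\tr\bbM_{(a)}^{-1}$ and the martingale filtration ``along the rows of $\widetilde\bbE$'' are unjustified; they would be legitimate only for Gaussian $\bbE$. The same objection defeats your treatment of \eqref{le3.1.2} and \eqref{le3.1.3}: the split $\bbE=\bbQ_{\bbj_{-t}}\bbE+(\bbI-\bbQ_{\bbj_{-t}})\bbE$ produces two uncorrelated but \emph{dependent} pieces, so $(\bbI-\bbQ_{\bbj_{-t}})\bbE$ is not made of ``fresh entries'' independent of $\bbM$, and the conditional-mean computation you invoke has no basis.

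This is precisely why the paper's proof leaves out \emph{columns} of the original matrix $\bbE$ rather than rows of the rotated one: the columns $\bbe_k$ are genuinely independent, so $\bbe_k$ is independent of $\bbM_k$ and $\widehat\bbM_k$, the quadratic-form lemma (Lemma 2.7 of Bai--Silverstein) applies to $\bbe_k'\bbQ_{\bbj_{-t}}\widehat\bbM_k^{-1}\bbQ_{\bbj_{-t}}\bbe_k$, and the martingale decomposition $\sum_k(\E_k-\E_{k-1})$ over columns is valid. The projection is then kept inside all quadratic forms and its effect is extracted \emph{deterministically}, via the eigenvalue identity $\tr\bigl(\widehat\bbM_1^{-1}\bbQ_{\bbj_{-t}}\bigr)=\tr\widehat\bbM_1^{-1}+k_{\bbj_{-t}}/z$, which is what produces the $\alpha_{m-t}$ shift in the limiting coefficients of \eqref{le3.1.1} and \eqref{le3.1.3}. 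Note also that non-Gaussianity leaves a real trace in the computation: in the paper's proof of \eqref{le3.1.2} the expectation contains a term driven by the third moments $\E e_{j1}^3$, which must be bounded explicitly; your rotation-invariance picture cannot even see that term. To repair your argument you would either have to assume normality (which the lemma deliberately avoids) or redo the analysis with leave-one-column-out on $\bbE$ itself, keeping $\bbQ_{\bbj_{-t}}$ inside the forms --- which is essentially the paper's proof.
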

Now, we are in position to prove Lemma \ref{le1} and Lemma \ref{le2}.
\subsection{Proof of Lemma \ref{le1}}
	We first prove \eqref{le1_eq1}.  
	By equation \eqref{det_eq} and the fact that for $\bbj\in\bbJ_+$, $
	|\bbY'\bbQ_{\bbj_{-t}}\bbY|=|\bbE'\bbQ_{\bbj_{-t}}\bbE|,
$
we have
\be
\log\left(\frac{|n\widehat\bgS_{\bbj_{-t}}|}{|n\widehat\bgS_{\bbj_{-t-1}}|}\right)=\log(1-\bba_{t+1}'\bbE(\bbE'\bbQ_{\bbj_{-t-1}}\bbE)^{-1}\bbE'\bba_{t+1})
\label{eq5}
\ee
and
\begin{align}\label{Cp1}
n\widehat\bgS_{\bbj_{-t}}-n\widehat\bgS_{\bbj_{-t-1}}
=-\bbE'\bba_{t+1}\bba_{t+1} '\bbE.
\end{align}
It follows	from \eqref{AIC_1} and \eqref{eq5} that 
	\begin{align*}
	\frac{1}{n}(A_\bbj-A_{\bbj_*})=\sum_{t=1}^m[\log(1-\bba_t'\bbE(\bbE'\bbQ_{\bbj_{-t}}\bbE)^{-1}\bbE'\bba_t)+2c_n].
	\end{align*}
	Since  $\bba_{t}$ is an eigenvector of $\bbQ_{\bbj_{-t}}$,  we have 
$
	\bba'_{t}\bbQ_{\bbj_{-t}}\bba_{t}=1,
$	
	which together with Lemma \ref{mainle} and  \eqref{le3.1.3_2} implies \begin{align}\label{eqlim1}
\frac1p\bba'_t\bbE(\frac1p\bbE'\bbQ_{\bbj_{-t}}\bbE-z\bbI_p)^{-1}\bbE'\bba_t \asto  1-\frac{1}{1+\underline{s}_t(z)}.\end{align}
Therefore,  by \eqref{eqli2} and as $n\to\infty$, we have 
\begin{align}\label{int1}
	\frac1{n^2}(A_{\bbj}-A_{\bbj_*})&=n^{-1} \sum_{t=1}^m\left(\log(1-\frac{c_n}{1-\alpha_{m-t}})+2c_n+o_{a.s.}(1)\right).
\end{align}
If $\lim \alpha_m>0$, then 
integration by parts indicates that \eqref{int1} tends to 
\begin{align*}
\int_{0}^{\alpha_m} (\log(1-\frac{c_n}{1-t})+2c_n)dt=2c_n\alpha_m+\log\left(\frac{(1-c_n)^{1-c_n}(1-\alpha_m)^{1-\alpha_m}}{(1-c_n-\alpha_m)^{1-c_n-\alpha_m}}\right),
\end{align*}
which implies \eqref{le1_eq1}.

\eqref{le1_eq3} is analogous; thus, we omit the details. Next, we prove \eqref{le1_eq2}. 
	It follows from \eqref{Cp_1} and \eqref{Cp1} that 
	\begin{align}\label{cpeq}
 \frac1n(C_{\bbj}-C_{\bbj_*})=\sum_{t=1}^m\left((\frac{k}n-1)\bba'_t\bbE(\bbE'\bbQ_{{\bm \omega}}\bbE)^{-1}\bbE'\bba_t+2\frac{p}n\right). 
	\end{align}
By \eqref{le3.1.3} and  \eqref{le3.1.4} and the fact that
\begin{align*}
\bba'_t\bbQ_{\bm \omega}\bba_t=0,
\end{align*} we have 
\begin{align*} 
&\bba'_t\bbE(\bbE'\bbQ_{{\bm \omega}}\bbE)^{-1}\bbE'\bba_t=\frac{c}{1-\ga-c}
+o_{a.s.}(1),
\end{align*}
which together  with \eqref{cpeq} implies 
\begin{align*}
 \frac1{n^2}(C_{\bbj}-C_{\bbj_*})&\asto \frac{c\alpha_m(\ga-1)}{1-\ga-c}+2c\alpha_m.
 \end{align*}
 Thus, we complete the proof of Lemma \ref{le1}.

\subsection{Proof of Lemma \ref{le2}}

We start with  ${\mathcal A}_{-t}$. When $t<0$,  
\bqn
{\mathcal A}_{-t}=\log\left(\frac{|\bbY'\bbQ_{\bbj_{-t}}\bbY|}{|\bbY'\bbQ_{\bbj_{-t-1}}\bbY|}\right)-2c_n. 
\eqn
Note that the index set $\bbj_{-t-1}$ contains one index $i_{-t}$ more than $\bbj_{-t}$; therefore, from the notation  $\bba_{t}=\bbQ_{\bbj_{-t}}\bbx_{i_{-t}}/\|\bbQ_{\bbj_{-t}}\bbx_{i_{-t}}\|$, 
we have
\be
{\mathcal A}_{-t}=-\log(1-\bba_{t}'\bbY(\bbY'\bbQ_{\bbj_{-t}}\bbY)^{-1}\bbY'\bba_{t})-2c_n.
\label{eq6}
\ee
To evaluate the limit of ${\mathcal A}_{-t}$, we consider
\bqa
m_{nt}:=m_{nt}(z)=-\log(1-\frac1p\bba_{t}'\bbY(\bbY'\bbQ_{\bbj_{-t}}\bbY/p-z\bbI_p)^{-1}\bbY'\bba_{t})-2c_n\nonumber,
\eqa
where $z\in\mathbb{C}^+$.
On the basis of the fact that  $\bba_{t}=\bbQ_{\bbj_{-t}}\bba_{t}$ and the in-out-exchange formula \eqref{in-out}, we  rewrite $m_{nt}$ as
\begin{align}
m_{nt}=-\log\left(-z\bba_{t}'\left(\bbQ_{\bbj_{-t}}\bbY\bbY'\bbQ_{\bbj_{-t}}/p-z\bbI_n\right)^{-1}\bba_{t}\right)-2c_n.
\label{eq7}
\end{align}
Substitute model (\ref{eq1}) into the above equation and denote 
\bqn
I_t:=I_t(z)&=&z\bba_{t}'\left(\bbQ_{\bbj_{-t}}\bbY\bbY'\bbQ_{\bbj_{-t}}/p-z\bbI_n\right)^{-1}\bba_{t}\\
&=&z\bba_{t}'\left(\bbQ_{\bbj_{-t}}(\bbX_{\bfell_l}\Theta_{\bfell_l}+\bbE)(\Theta_{\bfell_l}'\bbX_{\bfell_l}'+\bbE')\bbQ_{\bbj_{-t}}/p-z\bbI_n\right)^{-1}\bba_{t}
\eqn
where $\bfell_t=\{i_{1},\cdots, i_{-t}\}$. Define $\bbB_1=\bbQ_{\bbj_{-t}}\bbX_{\bfell_l}(\bbX_{\bfell_l}'\bbQ_{\bbj_{-t}}\bbX_{\bfell_l})^{-1/2}$ and select $\bbB_2$ such that $\bbB=\big(\bbB_1\vdots \bbB_2\big)$ is an $n\times n$ orthogonal matrix. Then, we have 
$$
I_t
=z\bba_{t}'\bbB\left(\bbB'\bbQ_{\bbj_{-t}}(\bbX_{\bfell_l}\Theta_{\bfell_l}+\bbE)(\Theta_{\bfell_l}'\bbX_{\bfell_l}'+\bbE')\bbQ_{\bbj_{-t}}\bbB/p-z\bbI_n\right)^{-1}\bbB'\bba_{t}.
$$
With  $\bba_{t}'\bbB_2=0$, we obtain 
\begin{align*}
	I_t
=&z\tilde\bba_{t}'\bigg(\bbB'_1\bbQ_{\bbj_{-t}}(\bbX_{\bfell_l}\Theta_{\bfell_l}+\bbE)(\Theta_{\bfell_l}'\bbX_{\bfell_l}'+\bbE')\bbQ_{\bbj_{-t}}\bbB_1/p\\
& -\bbB_1'\bbQ_{\bbj_{-t}}(\bbX_{\bfell_l}\Theta_{\bfell_l}+\bbE)\bbE'\bbQ_{\bbj_{-t}}\bbB_2(\bbB'_2\bbQ_{\bbj_{-t}}\bbE\bbE'\bbQ_{\bbj_{-t}}\bbB_2
/p-z\bbI_{n+t})^{-1}\\
& \ \cdot\bbB_2'\bbQ_{\bbj_{-t}}\bbE(\bbE'+\bbX_{\bfell_l}'\Theta_{\bfell_l})\bbQ_{\bbj_{-t}}\bbB_1/p^2-z\bbI_{-t}\bigg)^{-1}\tilde\bba_{t}
\end{align*}
where $\tilde \bba_{t}=\bbB_1'\bba_{t}$. By applying in-out-exchange formula \eqref{in-out}  to the term \begin{align*}
\bbE'\bbQ_{\bbj_{-t}}\bbB_2(\bbB'_2\bbQ_{\bbj_{-t}}\bbE\bbE'\bbQ_{\bbj_{-t}}\bbB_2
/p-z\bbI_{n-k_t})^{-1} \bbB_2'\bbQ_{\bbj_{-t}}\bbE/p,
\end{align*}
we obtain
\begin{align*}
I_t=-\tilde\bba_{t}'\bigg(\frac1p\bbB'_1\bbQ_{\bbj_{-t}}(\bbX_{\bfell_l}\Theta_{\bfell_l}+\bbE)&(\frac1p\bbE'\bbQ_{\bbj_{-t}}\bbB_2\bbB_2'\bbQ_{\bbj_{-t}}\bbE-z\bbI_p)^{-1}\\
&\times(\Theta_{\bfell_l}'\bbX_{\bfell_l}'+\bbE')\bbQ_{\bbj_{-t}}\bbB_1+\bbI_{-t}
\bigg)^{-1}\tilde\bba_{t},
\end{align*}
which together with $\bbM=\frac1p\bbE'\bbQ_{\bbj_{-t}}\bbE-z\bbI_{p}$
implies 
\begin{align*}
I_t=-\tilde\bba_{t}'\bigg(\frac1p\bbB'_1\bbQ_{\bbj_{-t}}(\bbX_{\bfell_l}\Theta_{\bfell_l}+\bbE)&(\bbM-\frac1p\bbE'\bbQ_{\bbj_{-t}}\bbB_1\bbB_1'\bbQ_{\bbj_{-t}}\bbE)^{-1}\\
&\times 
(\Theta_{\bfell_l}'\bbX_{\bfell_l}'+\bbE')\bbQ_{\bbj_{-t}}\bbB_1+\bbI_{-t}
\bigg)^{-1}\tilde\bba_{t}.
\end{align*}
Equations \eqref{inv1}-\eqref{in-out} can be used to separate $I_t$ into the following four parts,
\begin{align}\label{Itz}
	I_t=-\tilde\bba_{t}'\left(I_{1t}+I_{2t}+I_{2t}'+I_{3t}\right)^{-1}\tilde\bba_{t}
\end{align}
where
\begin{align*}
I_{1t}=&\frac1p\bbB_1'\bbQ_{\bbj_{-t}}\bbX_{\bfell_l}\Theta_{\bfell_l}\bbM^{-1}\Theta_{\bfell_l}'\bbX'_{\bfell_l}\bbQ_{\bbj_{-t}}\bbB_1+\frac1{p^2}\bbB_1'\bbQ_{\bbj_{-t}}\bbX_{\bfell_l}\Theta_{\bfell_l}\bbM^{-1}\bbE'\bbQ_{\bbj_{-t}}\bbB_1\\
&\ (\bbI_{-t}-\frac1p\bbB_1'\bbQ_{\bbj_{-t}}\bbE\bbM^{-1}\bbE'\bbQ_{\bbj_{-t}}\bbB_1)^{-1}\bbB_1'\bbQ_{\bbj_{-t}}\bbE\bbM^{-1}\Theta_{\bfell_l}'\bbX'_{\bfell_l}\bbQ_{\bbj_{-t}}\bbB_1;\\
I_{2t}=&\frac1p\bbB_1'\bbQ_{\bbj_{-t}}\bbX_{\bfell_l}\Theta_{\bfell_l}\bbM^{-1}\bbE'\bbQ_{\bbj_{-t}}\bbB_1(\bbI_{-t}-\frac1p\bbB_1'\bbQ_{\bbj_{-t}}\bbE\bbM^{-1}\bbE'\bbQ_{\bbj_{-t}}\bbB_1)^{-1};\\
I_{3t}=&(\bbI_{-t}-\frac1p\bbB_1'\bbQ_{\bbj_{-t}}\bbE\bbM^{-1}\bbE'\bbQ_{\bbj_{-t}}\bbB_1)^{-1}.
\end{align*}
It follows from  \eqref{le3.1.3} and  \eqref{le3.1.3_2} that 
\begin{align*}
  \frac1p\bbB_1'\bbQ_{\bbj_{-t}}\bbE\bbM^{-1}\bbE'\bbQ_{\bbj_{-t}}\bbB_1\stackrel{a.s.}{\to}(1-\frac{1}{1+
  \underline s_t(z)})\bbI_{-t},
\end{align*}
which implies 
\begin{align}\label{I4t}
  I_{3t}\stackrel{a.s.}{\to}({1+
  \underline s_t(z)})\bbI_{-t}.
\end{align}
Moreover, from  \eqref{le3.1.1}, \eqref{le3.1.2} and assumption (A4), we  have   
\begin{align*}
  \frac1{p}\bbB_1'\bbQ_{\bbj_{-t}}\bbX_{\bfell_l}\Theta_{\bfell_l}\bbM^{-1}\bbE'\bbQ_{\bbj_{-t}}\bbB_1\stackrel{a.s.}{\to} {\bf 0}_{-t},
\end{align*}
and 
\begin{align*}
  \frac1p\bbB_1'\bbX_{\bfell_l}\Theta_{\bfell_l}\bbM^{-1}\Theta_{\bfell_l}'\bbX'_{\bfell_l}\bbB_1+\frac{p^{-1}\gD_t}{z(1+\underline{s}_t(z)-\frac{1-c_n-\alpha_{m-t}}{ c_nz})}\stackrel{a.s.}{\to} 0,
\end{align*}
which together with  \eqref{I4t} imply
\begin{align*}
   I_{t}+\tilde\bba_{t}'\left(({1+
  \underline s(z)})\bbI_{-t}-\frac{p^{-1}\gD_t}{z(1+\underline{s}_t(z)-\frac{1-c_n-\alpha_{m-t}}{ c_nz})}\right)^{-1}\tilde\bba_{t}\stackrel{a.s.}{\to}0.\end{align*}
%
%
As $z\downarrow0+0i$ and with \eqref{eqli2} and  the notation $$ \delta_t:=\tilde\bba_{t}'\left({(1-\alpha_m)}\bbI_{-t}+n^{-1}\Delta_t\right)^{-1}\tilde\bba_{t},$$ we have  
\begin{align*}
I_t(0)+(1-\alpha_{m-t}-c_n)\delta_t\stackrel{a.s.}\to0.
\end{align*}
Here one should notice that $-t\leq k_*<\infty$, and thus $\alpha_{m-t}$  and $\alpha_{m}$ have the same limit. Therefore, we conclude that for $t<0$,
\begin{align*}
{\mathcal A}_{-t}+\log\delta_t+\log(1-\alpha_{m-t}-c_n)+2c_n\stackrel{a.s.}\to0.
\end{align*}

Next, we consider ${\mathcal C}_{-t}$ when $t<0$. 
Recall that 
\begin{align*}
{\mathcal C}_{-t}=	(1-\alpha_k)\bba_{t}'\bbY(\bbE'\bbQ_{\bm \omega}\bbE)^{-1}\bbY'\bba_{t}-2c_n,
\end{align*}
and let 
\bqn
J_t(z)=p^{-1}\bba_{t}'\bbY\left(\bbE'\bbQ_{{\bm \omega}}\bbE/p-z\bbI_p\right)^{-1}\bbY'\bba_{t}.
\eqn
Then, 
by substituting model (\ref{eq1}) into the above equation, we obtain
\begin{align*}
J_t(z)=&\frac1p\bba_{t}'( \bbX_{\bfell_l}\Theta_{\bfell_l}+\bbE)\left(\bbE'\bbQ_{{\bm \omega}}\bbE/p-z\bbI_p\right)^{-1}(\Theta_{\bfell_l}'\bbX_{\bfell_l}' +\bbE')\bba_{t}\\
=&\frac1p\bba_{t}' \bbX_{\bfell_l}\Theta_{\bfell_l}\left(\bbE'\bbQ_{{\bm \omega}}\bbE/p-z\bbI_p\right)^{-1}\Theta_{\bfell_l}'\bbX_{\bfell_l}' \bba_{t}\\&+\frac1p\bba_{t}' \bbX_{\bfell_l}\Theta_{\bfell_l}\left(\bbE'\bbQ_{{\bm \omega}}\bbE/p-z\bbI_p\right)^{-1}\bbE'\bba_{t}\\
&+\frac1p\bba_{t}'\bbE\left(\bbE'\bbQ_{{\bm \omega}}\bbE/p-z\bbI_p\right)^{-1}\Theta_{\bfell_l}'\bbX_{\bfell_l}' \bba_{t}\\
&+\frac1p\bba_{t}'\bbE\left(\bbE'\bbQ_{{\bm \omega}}\bbE/p-z\bbI_p\right)^{-1}\bbE'\bba_{t}\\
:=&J_{1t}+J_{2t}+J_{2t}'+J_{3t},
\end{align*}
where $\bfell_t=\{i_{-t+1},\cdots, i_s\}$. 
It follows from Lemma \ref{mainle} that 
\begin{gather*}
 J_{1t}+\frac{\frac1p\bba_{t}' \bbX_{\bfell_l}\Theta_{\bfell_l}\Theta_{\bfell_l}'\bbX_{\bfell_l}' \bba_{t}}{z(1+\underline{s}(z)-\frac{1-c_n-\alpha_k}{ c_nz})}\stackrel{a.s.}\to0,\\
J_{2t}\stackrel{a.s.}\to0 ~~~\mbox{and}~ ~~
J_{3t}+\frac{1}{z(1+\underline{s}(z)-\frac{1-c_n-\alpha_k}{ c_nz})}\stackrel{a.s.}\to0,
\end{gather*}
where  $\underline{s}(z)$  is the Stieltjes transform of  the LSD of  $\frac1p\bbE'\bbQ_{{\bm \omega}}\bbE$.  
Note that 
\begin{gather*}
  \frac1p\bba_{t}' \bbX_{\bfell_l}\Theta_{\bfell_l}\Theta_{\bfell_l}'\bbX_{\bfell_l}' \bba_{t}\\
  = \frac1p\tilde\bba_{t}'(\bbX'_{\bfell_l}\bbQ_{\bbj_t}\bbX_{\bfell_l})^{1/2}\Theta_{\bfell_l}\Theta_{\bfell_l}'(\bbX'_{\bfell_l}\bbQ_{\bbj_t}\bbX_{\bfell_l})^{1/2}\tilde\bba_{t}
 =c^{-1}\eta_t.
\end{gather*}
Therefore, letting $z\downarrow0+0i$,  we obtain
\begin{align*}
J_t(0)-\frac{\eta_t+c_n}{1-\alpha_k-c_n}\stackrel{a.s.}\to0,
\end{align*}
which implies 
\begin{align*}
  {\mathcal C}_{-t}=\frac{(1-\alpha_k)(\eta_t+c_n)}{1-c_n-\alpha_k}-2c_n+o_{a.s.}(1).
\end{align*}
Thus, we complete the proof of Lemma \ref{le2}.

\section{Conclusion and discussion}
In the present paper, we discussed the strong consistency of three fundamental selection criteria, i.e., the AIC, BIC, and $C_p$,  in the linear regression model under an LLL framework. We presented the sufficient and necessary conditions for  their strong consistency and determined how the dimension and size of the explanatory variables and the sample size affect the selection accuracy. Then, we proposed general  KOO criteria based on KOO methods and showed the sufficient  conditions for their strong consistency. The general KOO criteria have numerous advantages, such as simplicity of expression, ease of computation, limited restrictions, and fast convergence. 
 
The present paper considers only the case in which $\alpha+c<1$ because $\hat\bgS_{\bbj}$ may otherwise be singular. The singularity of $\hat\bgS_{\bbj}$ can be avoided by using a ridge-type estimator of the covariance matrix
 (e.g., \citep{YamamuraY10V,ChenP11R}) or by rewriting the statistics with nonzero eigenvalues (e.g., \cite{ZhangH17O}). 
 The consistency properties of the model selection criteria must also be studied when the true model size $k_*$  is large, i.e., $k_*/n$ tends to a constant as $n\to\infty$. All three topics require clarification of the theoretical results of RMT, which is left for future research.
 
In addition, we intend to obtain the asymptotic distributions of $\breve{A}_{j}$ and $\breve{C}_{j}$.
 In the present paper, we obtained only almost surely the limits of $\breve{A}_{j}$ and $\breve{C}_{j}$ and not their convergence rates. If we can determine their asymptotic distributions,  such results can be used to construct more reasonable selection criteria. According to the results of \citep{BaiM07A}, we guessed that the convergence 
rate should be $O(n^{-1/2})$. 
 
The main technical tool of the present paper is RMT. In the past two decades, the power of RMT has been partially demonstrated through high-dimensional multivariate analysis. Most subjects in classical multivariate analysis, including the model selection problems considered in this paper, can be (or have been) reexamined by RMT in high-dimensional settings. We hope that RMT will attract more attention in the future research of high-dimensional MLR.

\appendix
\section{Simulation results under non-normal distributions }
In this section, we present simulation results under a standardized $t$ distribution with three degrees of freedom and a standardized chi-square distribution with two degrees of freedom. Please see Figures \ref{fig_t3_1}-\ref{fig_chi2_2}. These results  are similar to those of the normal distribution case. Thus, we guess  that the consistency of these estimators depends on only the first two moments.

\begin{figure}[htbp!]\subfigure[Setting I]{
		\includegraphics[width=12.2cm,height=6cm]{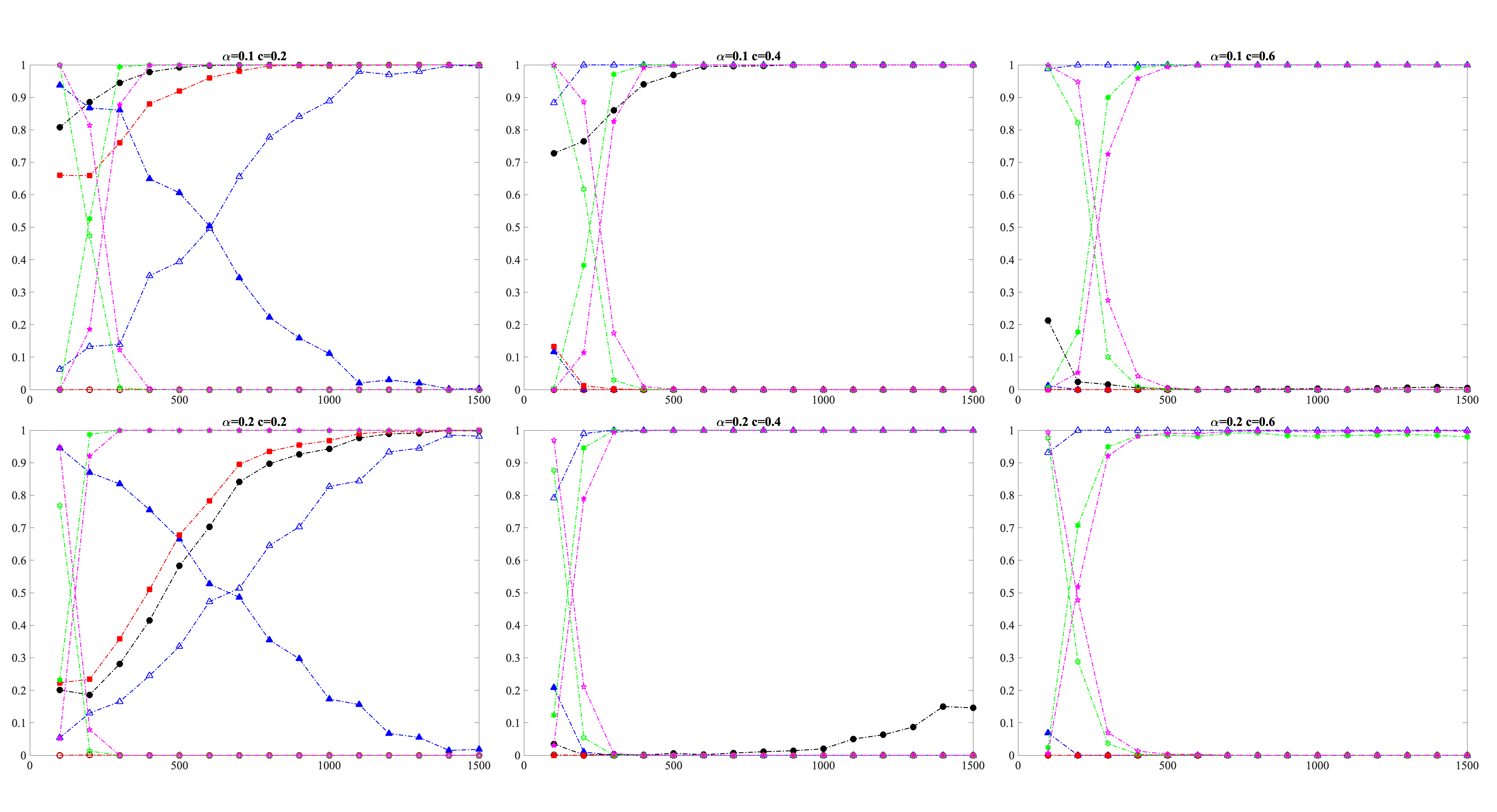}}
		\subfigure[Setting II]{
		\includegraphics[width=12.2cm,height=6cm]{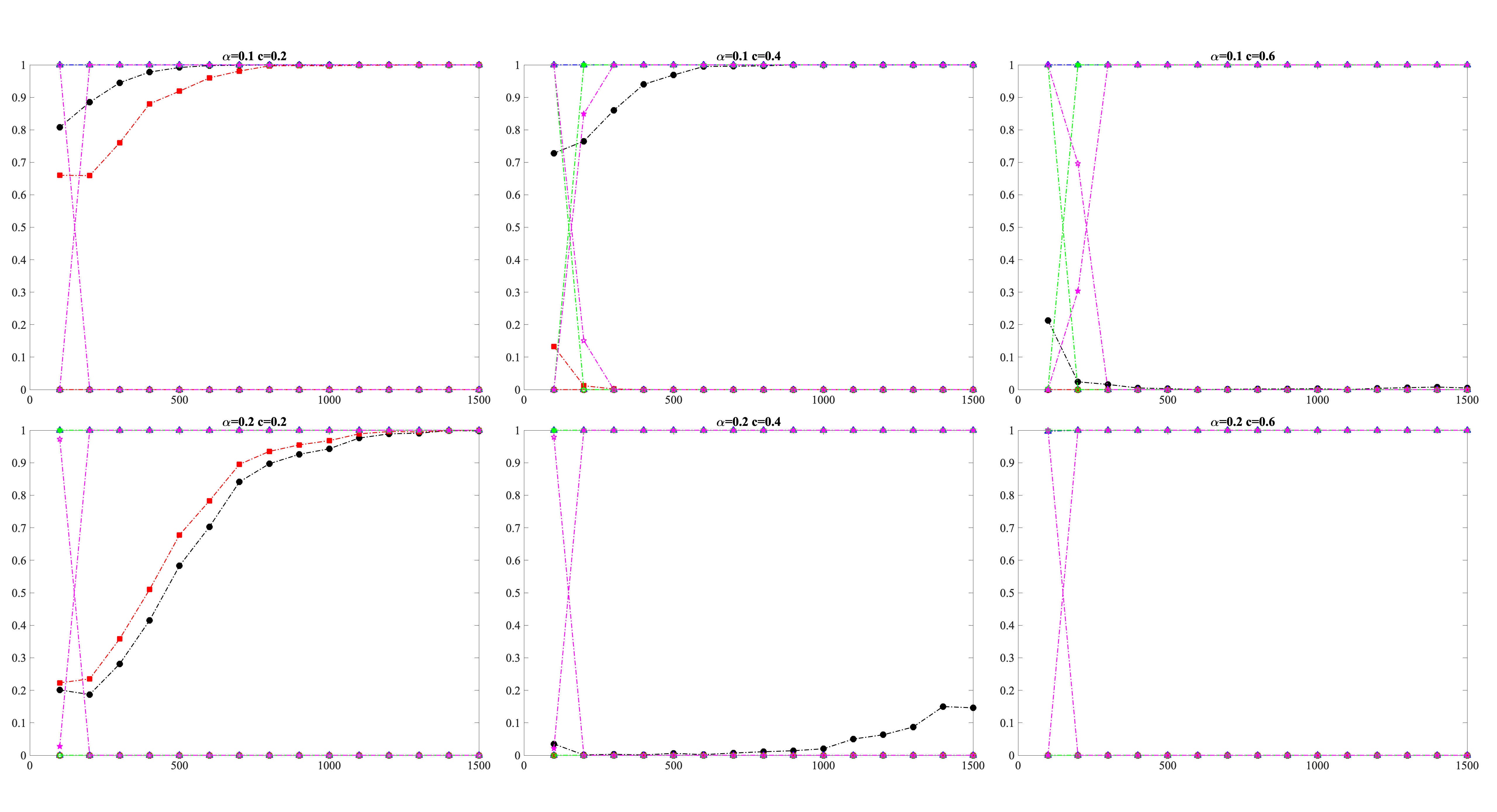}}
		\caption{Selection percentages under the AIC, BIC and $C_p$ for Settings I and II with a standard $t_3$ distribution. The horizontal axes represent the sample size $n$, and the vertical axes represent selection percentage. Black solid circles, blue solid triangles, red  solid squares, green solid hexagrams and magenta solid pentagrams
		denote the selection percentages of $\tilde\bbj_A=\bbj_*$,  $\tilde\bbj_B=\bbj_*$, $\tilde\bbj_C=\bbj_*$, $\breve\bbj_A=\bbj_*$ and $\breve\bbj_C=\bbj_*$, respectively. Correspondingly, black  circles, blue  triangles, red   squares, green  hexagrams and magenta  pentagrams
		denote the selection percentages of $\tilde\bbj_A\in\bbJ_-$,  $\tilde\bbj_B\in\bbJ_-$, $\tilde\bbj_C\in\bbJ_-$, $\breve\bbj_A\in\bbJ_-$ and $\breve\bbj_C\in\bbJ_-$, respectively. 
}
		\label{fig_t3_1} 
\end{figure}

\begin{figure}[htbp!]
\subfigure[Setting I]{
		\includegraphics[width=6.1cm,height=6cm]{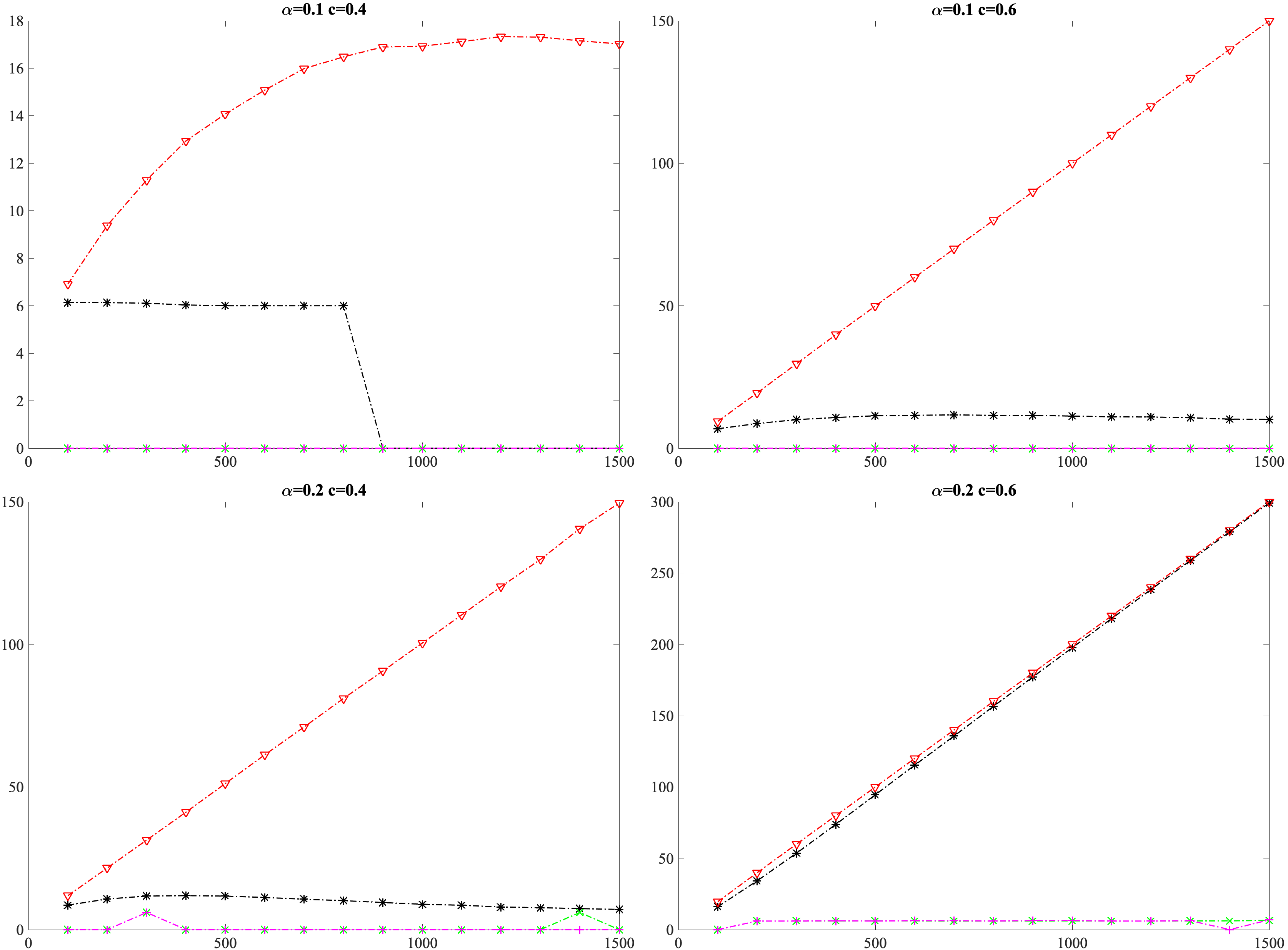}}
		\subfigure[Setting II]{
		\includegraphics[width=6.1cm,height=6cm]{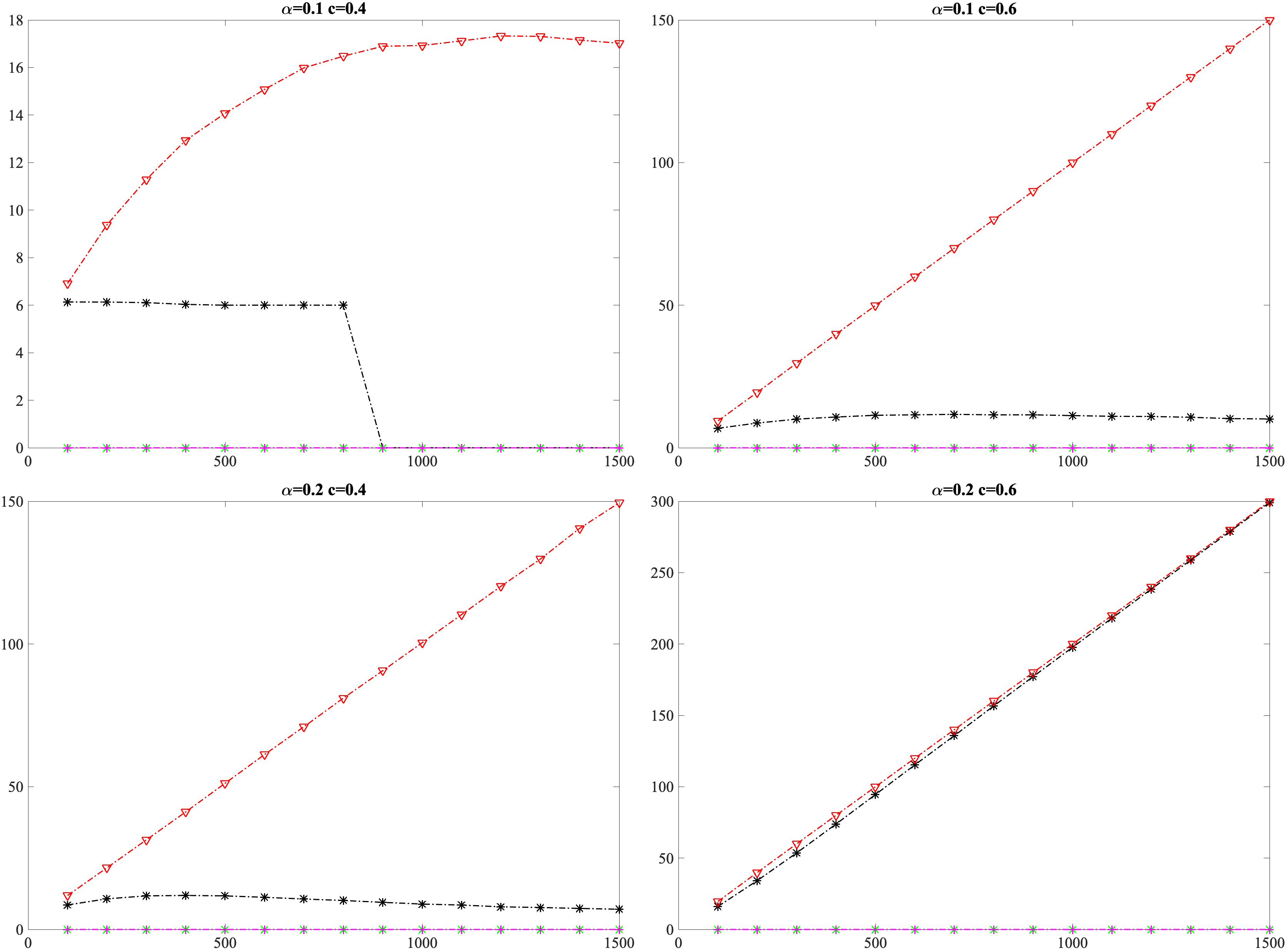}}
		\caption{Overspecified model sizes of the AIC and $C_p$ for Settings I and II with a standard $t_3$ distribution. The horizontal axes represent the sample size $n$, and the vertical axes represent the model size. 	Black  asterisks,  red   right-pointing triangles, green  crosses and magenta  plus signs
		denote the average sizes  of $\tilde\bbj_A\in\bbJ_+$,  $\tilde\bbj_C\in\bbJ_+$, $\breve\bbj_A\in\bbJ_+$ and $\breve\bbj_C\in\bbJ_+$, respectively.  In this figure, we let $0/0=0$.
		}
		\label{fig_t3_2} 
\end{figure}

\begin{figure}[htbp!]\subfigure[Setting I]{
		\includegraphics[width=12.2cm,height=6cm]{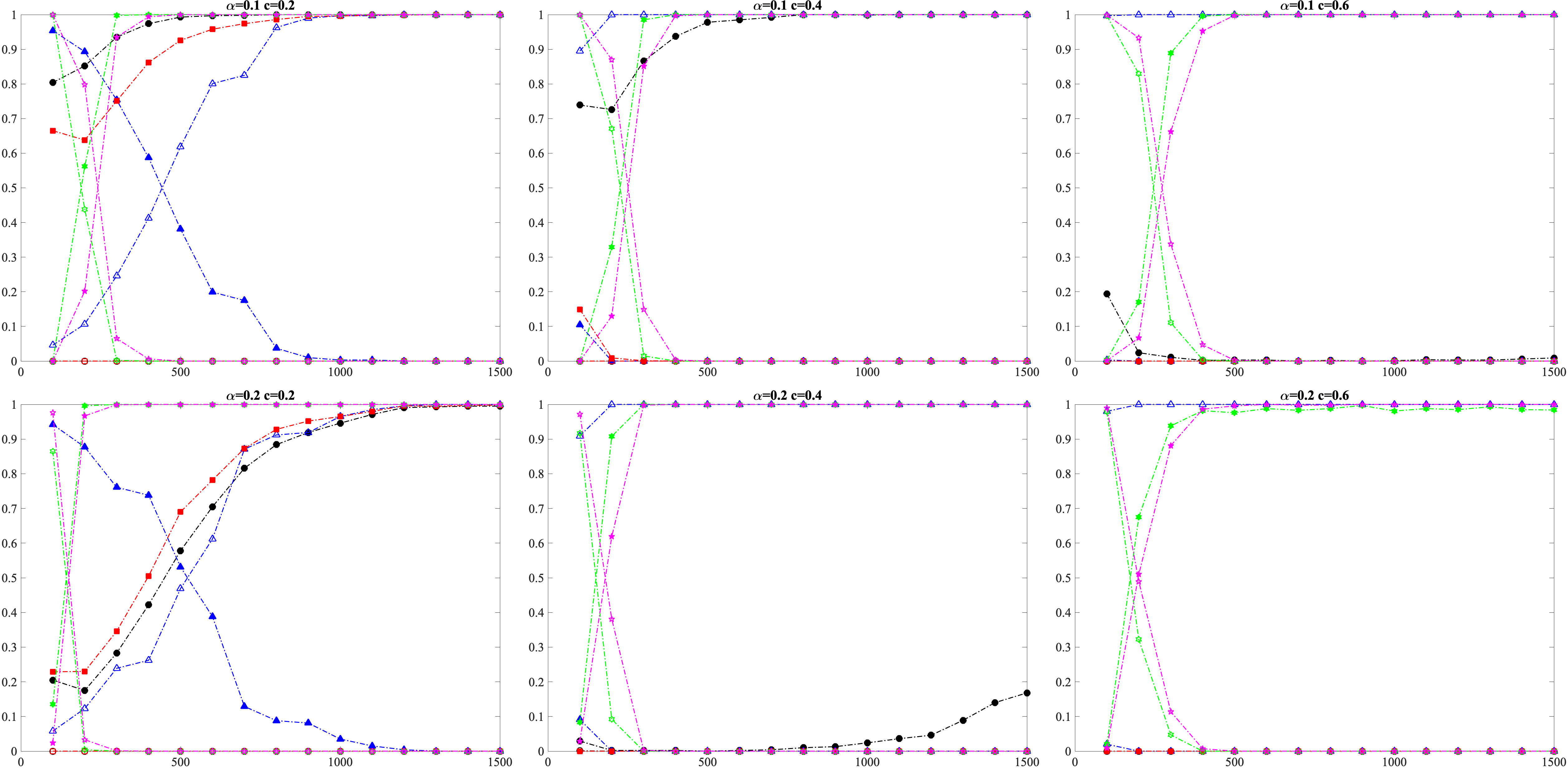}}
		\subfigure[Setting II]{
		\includegraphics[width=12.2cm,height=6cm]{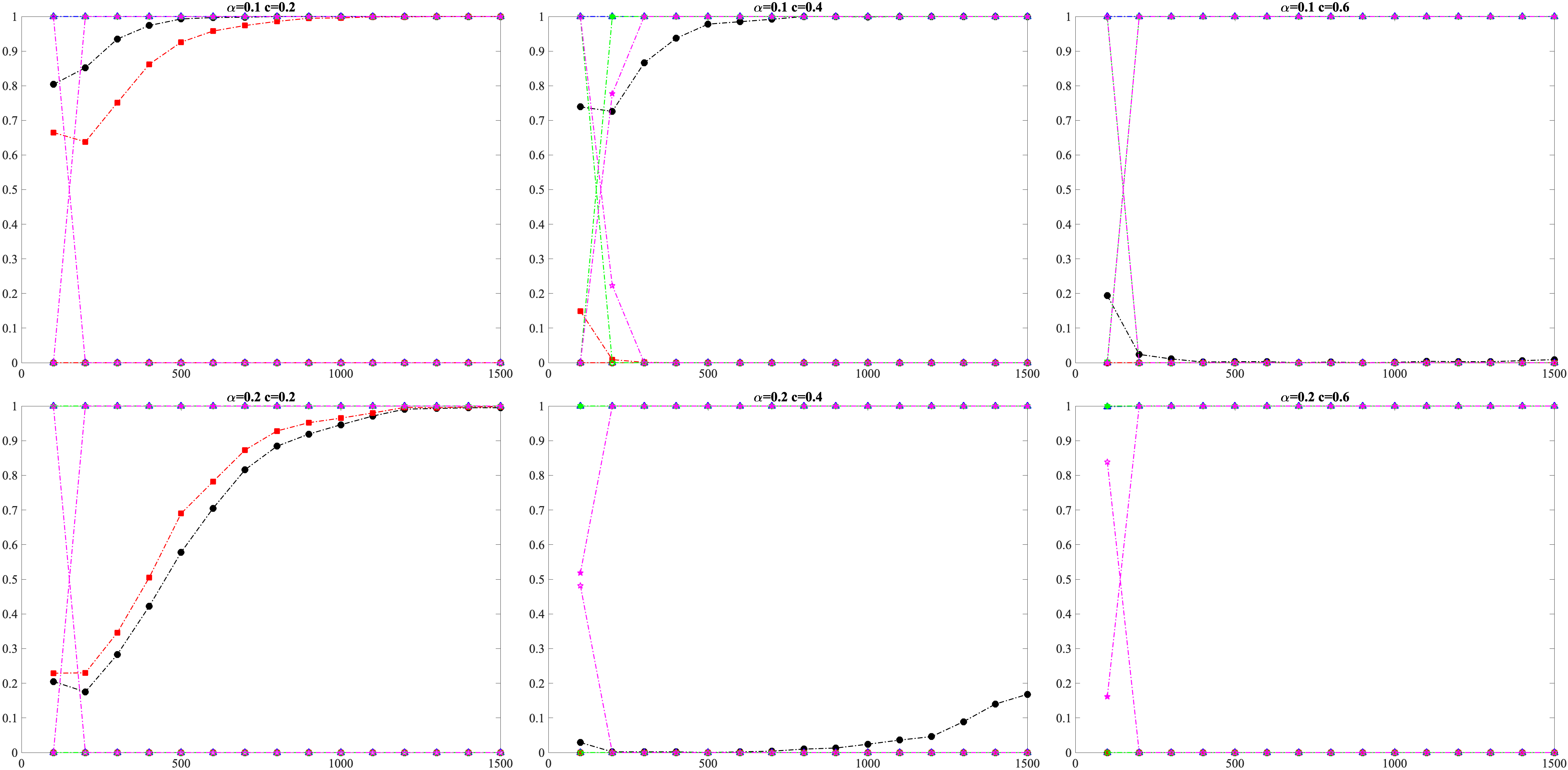}}
		\caption{Selection percentages under the AIC, BIC and $C_p$ for Settings I and II with  standard $\chi^2_2$ distribution. The horizontal axes represent the sample size $n$, and the vertical axes represent the selection percentages. Black solid circles, blue solid triangles, red  solid squares, green solid hexagrams and magenta solid pentagrams
		denote the selection percentages of $\tilde\bbj_A=\bbj_*$,  $\tilde\bbj_B=\bbj_*$, $\tilde\bbj_C=\bbj_*$, $\breve\bbj_A=\bbj_*$ and $\breve\bbj_C=\bbj_*$, respectively. Correspondingly, black  circles, blue  triangles, red   squares, green  hexagrams and magenta  pentagrams
		denote the selection percentages of $\tilde\bbj_A\in\bbJ_-$,  $\tilde\bbj_B\in\bbJ_-$, $\tilde\bbj_C\in\bbJ_-$, $\breve\bbj_A\in\bbJ_-$ and $\breve\bbj_C\in\bbJ_-$, respectively.
}
		\label{fig_chi2_1} 
\end{figure}
\begin{figure}[htbp!]
\subfigure[Setting I]{
		\includegraphics[width=6.1cm,height=6cm]{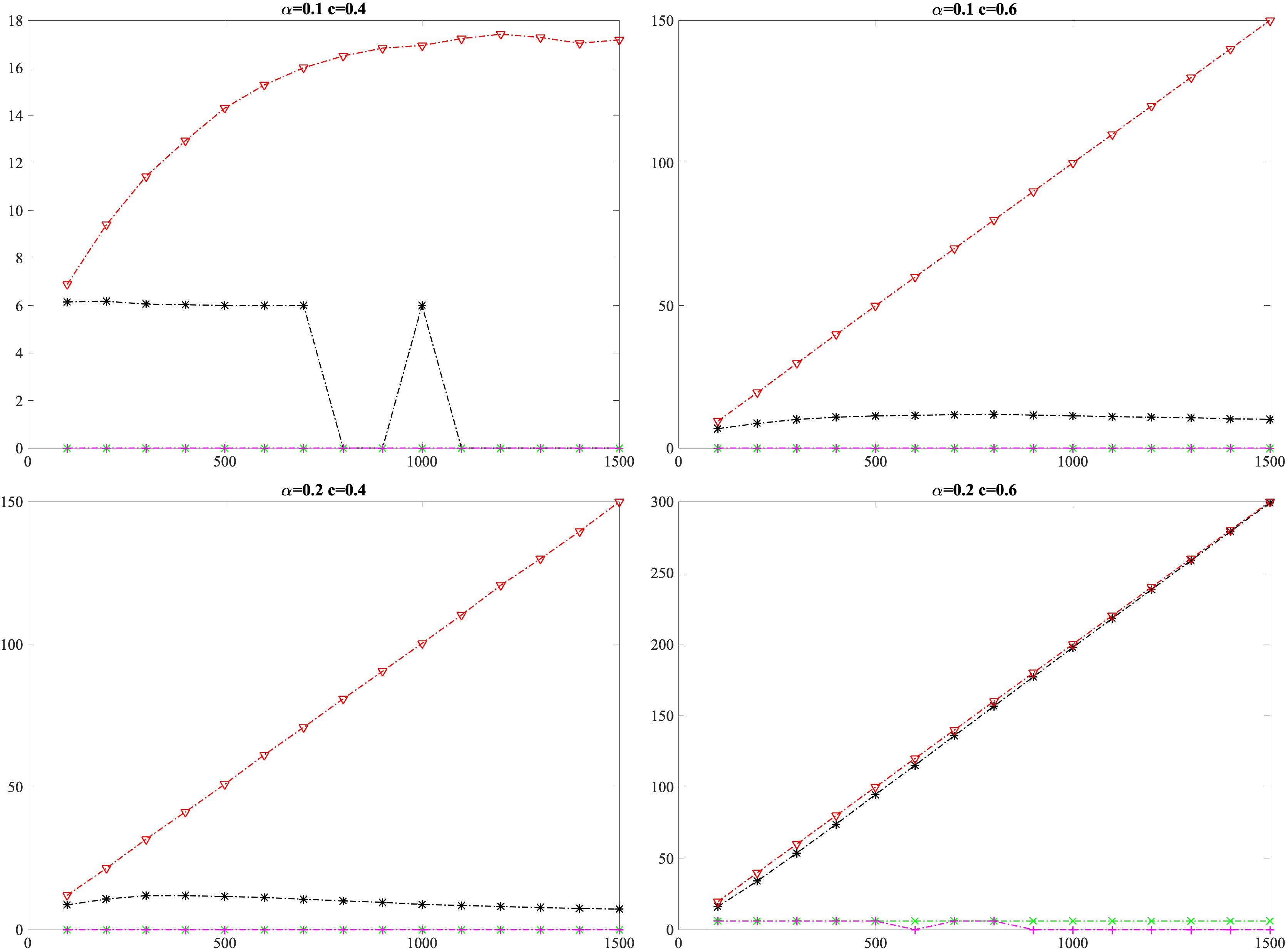}}
		\subfigure[Setting II]{
		\includegraphics[width=6.1cm,height=6cm]{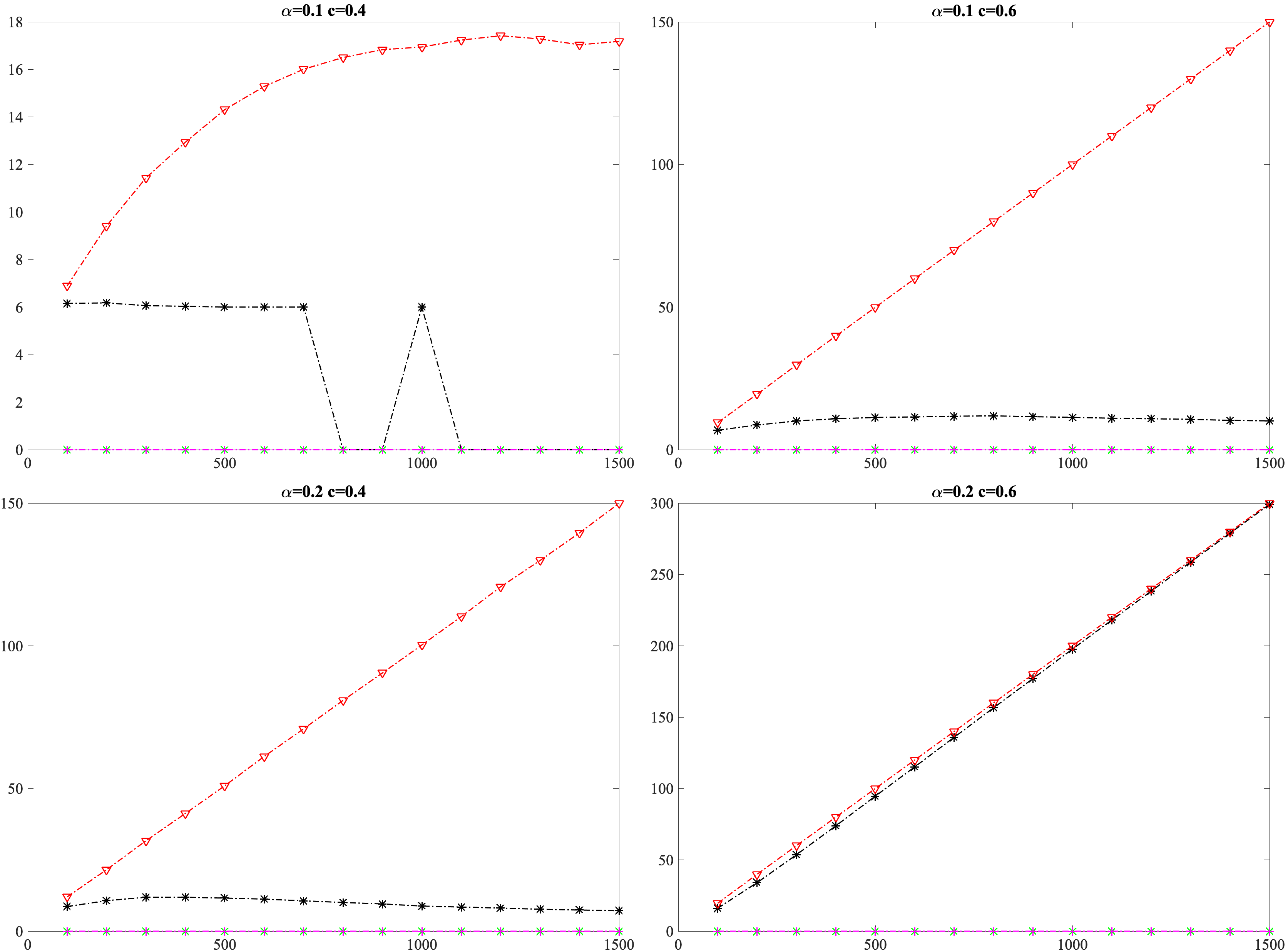}}
		\caption{Overspecified model sizes of the AIC and $C_p$ for Settings I and II with a standard $chi_2^2$ distribution. The horizontal axes represent the sample size $n$, and the vertical axes represent the model size. 	Black  asterisks,  red   right-pointing triangles, green  crosses and magenta  plus signs
		denote the average sizes  of $\tilde\bbj_A\in\bbJ_+$,  $\tilde\bbj_C\in\bbJ_+$, $\breve\bbj_A\in\bbJ_+$ and $\breve\bbj_C\in\bbJ_+$, respectively.  In this figure we let $0/0=0$.}
		\label{fig_chi2_2}  
\end{figure}
\section{Proof  of Lemma 5.1}
We now present the proof  of Lemma \ref{mainle}. In the following, $C$ represents a generic constant whose value may vary from line to line. 
%
%
%

\begin{proof}[Proof of Lemma \ref{mainle}] 
According to the truncation approach of \cite{BaiM07A}, we can assume that the  variables $\{e_{ij},i=1\dots n,j=1\dots p\}$ satisfy the following additional condition: 
\begin{align*}
	 |e_{ij}|<C, \quad  \mbox{for all $i,j$ and sufficiently large constant $C$}.
\end{align*}
 As the proof is very similar, we omit the details here.  
Let $\bga_{k1}$ be the $p-1$ subvector of $\bga_1$ with the $k$-th entry removed, and let $\alpha_{k1}$ be the $k$-th entry of $\bga_1$. Analogously, we can define $\bga_{k2} $ and $ \alpha_{k2}$. Define $\bbM_k=\frac1p\bbE_k'\bbQ_{\bbj_{-t}}\bbE_k-z\bbI_{p-1}$, where $\bbE_k$ is the $n\times (p-1)$ submatrix of $\bbE$ with the $k$-th column removed. Denote by $\E_k$ the conditional expectation given $\{\bbe_1,\cdots,\bbe_k\}$ and by $\E_0$  the unconditional expectation, where $\bbe_i $ is the $n$-vector of the $i$-th column 
of $\bbE$. 
Then, by inverting the block matrix, we obtain 
 \begin{gather}
 \bga'_1\bbM^{-1}\bga_2=\bga_{k1}'\bbM_k^{-1}\bga_{k2}+\frac{1}{\beta_kp^2}\bga_{k1}'\bbM_k^{-1}\bbE_k'\bbQ_{\bbj_{-t}}\bbe_k\bbe_k'\bbQ_{\bbj_{-t}}\bbE_k\bbM_k^{-1}\bga_{k2}\nonumber\\
 -\frac{\alpha_{k2}}{\beta_kp}\bga_{k1}'\bbM_k^{-1}\bbE_k'\bbQ_{\bbj_{-t}}\bbe_k-\frac{\alpha_{k1}}{\beta_kp}\bbe_k'\bbQ_{\bbj_{-t}}\bbE_k\bbM_k^{-1}\bga_{k2}+\frac{\alpha_{k1}\alpha_{k2}}{\beta_k},\label{beta1}
 \end{gather}
 where 
 \begin{align*}
 \beta_k&=\frac1p\bbe_k'\bbQ_{\bbj_{-t}}\bbe_k-z-\frac1{p^2}\bbe_k'\bbQ_{\bbj_{-t}}\bbE_k\bbM_k^{-1}\bbE_k'\bbQ_{\bbj_{-t}}\bbe_k\\
& =-z(1+\frac1p\bbe_k'\bbQ_{\bbj_{-t}}\widehat{\bbM}_k^{-1}\bbQ_{\bbj_{-t}}\bbe_k).
 \end{align*}
 The last equation is from the in-out-exchange formula \eqref{in-out}
and  $$\widehat{\bbM}_k:=\frac1p\bbQ_{\bbj_{-t}}\bbE_k\bbE_k'\bbQ_{\bbj_{-t}}-z\bbI_n.$$  Denote $\beta^{tr}_k=-z[1+\frac1ptr(\bbQ_{\bbj_{-t}}\widehat{\bbM}_k^{-1})]$. It  follows that 
 \begin{align}\label{betr}
	\frac1{\beta_k}=\frac1{\beta^{tr}_k}+\frac{\beta_k-\beta^{tr}_k}{p\beta_k\beta^{tr}_k}=\frac1{\beta^{tr}_k}+\frac{\xi_k}{\beta_k\beta^{tr}_k},
\end{align}
where $\xi_k=p^{-1}\bbe_k'\bbQ_{\bbj_{-t}}\widehat{\bbM}_k^{-1}\bbQ_{\bbj_{-t}}\bbe_k-p^{-1}tr\bbQ_{\bbj_{-t}}\widehat{\bbM}_k^{-1}\bbQ_{\bbj_{-t}}$.
 It follows from \eqref{beta1} that
\bqn
&&\bga'_1\bbM^{-1}\bga_2-\E  \bga'_1\bbM^{-1}\bga_2
=\sum_{k=1}^p(\E _k-\E _{k-1})\bga'_1\bbM^{-1}\bga_2\\
&=&\sum_{k=1}^p(\E _k-\E _{k-1})(\bga'_1\bbM^{-1}\bga_2-\bga_{k1}'\bbM_k^{-1}\bga_{k2})\\
&=&\sum_{k=1}^p(\E _k-\E _{k-1})({\cal M}_1-{\cal M}_2-{\cal M}_3+{\cal M}_4),
\eqn
where 
\begin{gather*}
  {\cal M}_1=\frac{1}{\beta_kp^2}\bga_{k1}'\bbM_k^{-1}\bbE_k'\bbQ_{\bbj_{-t}}\bbe_k\bbe_k'\bbQ_{\bbj_{-t}}\bbE_k\bbM_k^{-1}\bga_{k2},~
  {\cal M}_2=\frac{\alpha_{k2}}{\beta_kp}\bga_{k1}'\bbM_k^{-1}\bbE_k'\bbQ_{\bbj_{-t}}\bbe_k\\
  {\cal M}_3=\frac{\alpha_{k1}}{\beta_kp}\bbe_k'\bbQ_{\bbj_{-t}}\bbE_k\bbM_k^{-1}\bga_{k2},~
  {\cal M}_4=\frac{\alpha_{k1}\alpha_{k2}}{\beta_k}.
\end{gather*} 
Note that for any fixed $z\in \mathbb{C}^+$,  we have  
 $$\min\{|\beta_k|, ~|\beta^{tr}_k|,~\|\bbM_k
\|,~\|\widehat\bbM_k
\|\}\geq \Im z= v>0,$$ and  $\|p^{-1/2}\bbE_k\|$ is almost surely bounded by a constant under   assumption (A2) (see \citep{BaiS98N,BaiS99E}, for example). Thus,  together with  the condition that  $\bga_1$ and $\bga_2$ are both   bounded in Euclidean norm, 
we conclude   that  for $m\geq 1$
\begin{gather}
\E|\xi_k|^{2m}\leq Cp^{-m}v^{-2m}\label{xik},\\
\E|\bga_{k1}'\bbM_k^{-1}\bbE_k'\bbQ_{\bbj_{-t}},\bbe_k\bbe_k'\bbQ_{\bbj_{-t}}\bbE_k\bbM_k^{-1}\bga_{k2}|^m\leq Cp^{m}v^{-2m}\\
\max\{\E|\bga_{k1}'\bbM_k^{-1}\bbE_k'\bbQ_{\bbj_{-t}}\bbe_k|^{2m},\E|\bbe_k'\bbQ_{\bbj_{-t}}\bbE_k\bbM_k^{-1}\bga_{k2}|^{2m}\}\leq Cp^{m}v^{-2m}.
\end{gather}
Here, we use the quadric	form inequality shown in Lemma 2.7 of \cite{BaiS98N}.  Thus, 
by the Burkholder inequality (see Lemma 2.1 in \citep{BaiS98N}),   we obtain 
\begin{align*}
 \E\left|\sum_{k=1}^p (\E _k-\E _{k-1}){\cal M}_1\right|^{4}&\leq C
\E \left(\sum_{k=1}^p\E _{k-1}|{\cal M}_{1}|^2\right)^2+ C
\E \sum_{k=1}^p|{\cal M}_{1}|^{4}\\
&=O(p^{-2}).
\end{align*}
Analogously, we also have 
\begin{align*}
  \E\left|\sum_{k=1}^p (\E _k-\E _{k-1}){\cal M}_2\right|^{4}=O(p^{-2})\mbox{~and~}  \E\left|\sum_{k=1}^p (\E _k-\E _{k-1}){\cal M}_3\right|^{4}=O(p^{-2}).
\end{align*}
For ${\cal M}_4$, by \eqref{xik}, \eqref{betr} and the Burkholder inequality, we obtain
\begin{align*}
   \E\left|\sum_{k=1}^p (\E _k-\E _{k-1}){\cal M}_4\right|^{4}=   \E\left|\sum_{k=1}^p (\E _k-\E _{k-1})\frac{\xi_k}{\beta_k\beta^{tr}_k}\right|^{4}=O(p^{-2}), 
\end{align*}
which finally implies 
$$\E|\bga'_1\bbM^{-1}\bga_2-\E  \bga'_1\bbM^{-1}\bga_2|^{4}=O(p^{-2}).$$
Therefore, by the Borel-Cantelli lemma,  we have that 
\begin{align}\label{A2}
	\bga'_1\bbM^{-1}\bga_2-\E  \bga'_1\bbM^{-1}\bga_2\asto0.
\end{align}
%
%
%

Because the entry distributions of $\bbE$ are identical, we know that the diagonal elements of $\E \bbM^{-1}$ are the same, denoted as $a_n(z)$ in the following. The off-diagonal elements are also the same, denoted as $b_n(z)$. From (\ref{beta1}), (\ref{betr}) and (\ref{xik}), we have
\begin{align}\label{an}
	a_n(z)&=\E \frac{1}{\beta_1}=\E\frac{1}{\beta^{tr}_1}+o(1).
\end{align}
Let $\gl_1\ge \cdots\ge\gl_n\ge 0$ be the eigenvalues of $p^{-1}\bbQ_{\bbj_{-t}}\bbE_1\bbE_1'\bbQ_{\bbj_{-t}}$ whose corresponding eigenvectors are denoted by $\bbv_1,\cdots,\bbv_n$. Note that the rank of $\bbQ_{\bbj_{-t}}$ is $n-k_{\bbj_{-t}}$, the dimension of the null space of $\bbQ_{\bbj_{-t}}$ is $k_{\bbj_{-t}}$,  
and  the null space of $p^{-1}\bbQ_{\bbj_{-t}}\bbE_1\bbE_1'\bbQ_{\bbj_{-t}}$ is not smaller than that of $\bbQ_{\bbj_{-t}}$. Thus, we may select the last $k_{\bbj_{-t}}$ eigenvectors from the null space of $\bbQ_{\bbj_{-t}}$, that is, we may assume that 
$\bbQ_{\bbj_{-t}}\bbv_i=0$, $i=n-k_{\bbj_{-t}}+1,\cdots,n$.  Therefore, the matrix $\widehat{\bbM}_1^{-1}\bbQ_{\bbj_{-t}}$ has $k_{\bbj_{-t}}$-fold eigenvalue $0$. By contrast, for all $i=1,\cdots,n-k_{\bbj_{-t}}$, $\bbQ_{\bbj_{-t}}\bbv_i\ne 0$ and
$\bbQ_{\bbj_{-t}}\bbv_i$ is also an eigenvector of $p^{-1}\bbQ_{\bbj_{-t}}\bbE_1\bbE_1'\bbQ_{\bbj_{-t}}$ corresponding to $\gl_i$, that is, 
$$
p^{-1}\bbQ_{\bbj_{-t}}\bbE_1\bbE_1'\bbQ_{\bbj_{-t}}\bbv_i=\gl_i\bbQ_{\bbj_{-t}}\bbv_i,
$$
which is equivalent to
$$
(p^{-1}\bbQ_{\bbj_{-t}}\bbE_1\bbE_1'\bbQ_{\bbj_{-t}}-z\bbI_n)^{-1}\bbQ_{\bbj_{-t}}\bbv_i=(\gl_i-z)^{-1}\bbQ_{\bbj_{-t}}\bbv_i.
$$
Thus, $(\gl_i-z)^{-1}$ is a nonzero eigenvalue of 
$\widehat{\bbM}_1^{-1}\bbQ_{\bbj_{-t}}$. Therefore, 
\bqn
\frac1p\rtr\widehat{\bbM}_1^{-1}\bbQ_{\bbj_{-t}}
=\frac1p\sum_{i=1}^{n-k_{\bbj_{-t}}}\frac{1}{\gl_i-z}
=\frac1p\sum_{i=1}^n\frac1{\gl_i-z}+\frac{k_{\bbj_{-t}}}{pz}=\frac1ptr\widehat{\bbM}_1^{-1}+\frac{k_{\bbj_{-t}}}{pz}.
\eqn
Since the spectra of $p^{-1}\bbQ_{\bbj_{-t}}\bbE_1\bbE_1'\bbQ_{\bbj_{-t}}$ and $p^{-1}\bbE_1'\bbQ_{\bbj_{-t}}\bbE_1$ differ by $|n-p-1|$ zero eigenvalues, it follows that
\begin{align*}
\E\frac1ptr\widehat{\bbM}_1^{-1}=\E{\frac1ptr{\bbM}_1^{-1}+\frac{1-n/p}{z}}
\to\underline{s}_t(z)+\frac{c-1}{cz}.
\end{align*}
Together with \eqref{an} and the last equation, we obtain
\begin{align}
\label{eq3.1.2}
a_n(z)\to-\frac1{z(1+\underline{s}_t(z)+\frac{c-1+\alpha_{m-t}}{cz})}.
\end{align}
Here,  $\alpha_{m-t}=\lim k_{\bbj_{-t}}/n.$

 Furthermore, from the inverse matrix formula, we obtain
 \bqn
 b_n(z)&=&\E  \left(\frac{\bbu_1'\bbM_1^{-1}\bbE_1'\bbQ_{\bbj_{-t}}\bbe_1}{p\beta_1}\right),
 \eqn
 where $\bbu_1=(1,0,\cdots,0)'$ is a $(p-1)$-dimensional vector, and $\bbe_1$ is the first column of $\bbE$.
 Because $\bbe_1$ is independent of $\bbE_1$, by the Cauchy-Schwarz inequality and equation 
\eqref{betr},
 we have
\begin{align}
\label{eq3.1.3}
 |b_n(z)|&=\left|\E   \left(\frac{\bbu_1'\bbM_1^{-1}\bbE_1'\bbQ_{\bbj_{-t}}\bbe_1\xi_1}{p\beta_1 \beta_1^{tr}}\right)\right|=O(p^{-1}).
\end{align} 
 Therefore, by combining \eqref{A2}-\eqref{eq3.1.3} and the $C_r$ inequality, the proof of  \eqref{le3.1.1} is complete.  

For the proof of (\ref{le3.1.2}), 
 using the inverse of block matrix again, we obtain 
 \begin{gather*}
\bga'_1\bbM^{-1}\bbE'\bga_2=\bga_{k1}'\bbM_k^{-1}\bbE_k'\bga_{2}+\frac{1}{\beta_kp^2}\bga_{k1}'\bbM_k^{-1}\bbE_k'\bbQ_{\bbj_{-t}}\bbe_k\bbe_k'\bbQ_{\bbj_{-t}}\bbE_k\bbM_k^{-1}\bbE_k'\bga_{2}\nonumber\\
 -\frac{\alpha_{k2}}{\beta_kp}\bga_{k1}'\bbM_k^{-1}\bbE_k'\bbQ_{\bbj_{-t}}\bbe_k-\frac{\alpha_{k1}}{\beta_kp}\bbe_k'\bbQ_{\bbj_{-t}}\bbE_k\bbM_k^{-1}\bbE_k'\bga_{2}+\frac{\alpha_{k1}\bbe_k'\bga_{2}}{\beta_k}.
 \end{gather*}
By means of the same procedure used in the proof of  \eqref{le3.1.1}, we can obtain that, almost surely,
$$\frac{1}{\sqrt{p}}\bga_1'\bbM^{-1}\bbE'\bga_2- \frac{1}{\sqrt{p}}\E\bga_1'\bbM^{-1}\bbE'\bga_2\to 0.$$

Next, we show that 
$
\frac{1}{\sqrt{p}}\E\bga_1'\bbM^{-1}\bbE'\bga_2=o(1).
$
Write $\bbM^{-1}=(M^{ij})$. Then, we have
\begin{align*}
  &\frac{1}{\sqrt{p}}\E \bga_1'\bbM^{-1}\bbE'\bga_2=\frac{1}{\sqrt{p}}\sum_{ij}\alpha_{i1}\E  M^{ij}\bbe_j'\bga_2\\
=&\frac{1}{\sqrt{p}}\sum_{i=1}^p\alpha_{i1}\left(\E  M^{11}\bbe_1'\bga_2+(p-1)\E  M^{12}\bbe_2'\bga_2\right)\\
=&\frac{1}{\sqrt{p}}\sum_{i=1}^p\alpha_{i1}\left(\E \frac{\bbe_1'\bga_2}{\beta_1}-(p-1)\E \frac{\bbu_1'\bbM_1^{-1}\bbE_1'\bbQ_{\bbj_{-t}}\bbe_1\bbe_2'\bga_2}{p\beta_1}\right)\\
=&\frac{1}{\sqrt{p}}\sum_{i=1}^p\alpha_{i1}\left(\E \frac{\xi_1\bbe_1'\bga_2}{\beta_1\beta_1^{tr}}
-(p-1)\E \frac{\bbu_1'\bbM_1^{-1}\bbE_1'\bbQ_{\bbj_{-t}}\bbe_1\bbe_2'\bga_2\xi_1}{p\beta_1\beta_1^{tr}}\right)\\
=&\frac{1}{p^{3/2}}\sum_{i=1}^p\alpha_{i1}\sum_{j=1}^n\E \frac{e_{j1}^3(\bbQ_{\bbj_{-t}}\widehat{\bbM}_1^{-1}\bbQ_{\bbj_{-t}})_{jj}\alpha_{j1}}{(\beta_1^{tr})^2}\\
&-\frac{1}{\sqrt{p}}\sum_{i=1}^p\alpha_{i1}\E \frac{\bbu_1'\bbM_1^{-1}\bbE_1'\bbQ_{\bbj_{-t}}\bbe_1\bbe_2'\bga_2\xi_1}{(\beta_1^{tr})^2}+o(1)\\
=&o(1),
\end{align*}
which completes the proof of (\ref{le3.1.2}).
In the above equation, we use \eqref{xik}, the Cauchy-Schwarz inequality, the $C_r$ inequality and the   facts that 
\bqn
\sum_{i=1}^p|\alpha_{i1}|\le\left(p\sum_{i=1}^p|\alpha_{i1}^2|\right)^{1/2} =O(\sqrt{p})
\eqn
and
\bqn
&&\left|\E \frac{\bbu_2'\bbM_1^{-1}\bbE_1'\bbQ_{\bbj_{-t}}\bbe_1\bbe_2'\bga_2\xi_1}{\sqrt{p}\beta_1\beta_1^{tr}}\right|\\
&\le &\left|\E \frac{\bbu_2'\bbM_1^{-1}\bbE_1'\bbQ_{\bbj_{-t}}\bbe_1\bbe_2'\bga_2\xi_1}{\sqrt{p}(\beta_1^{tr})^2}\right|
+
\E \left|\frac{(\bbu_2'\bbM_1^{-1}\bbE_1'\bbQ_{\bbj_{-t}}\bbe_1\bbe_2'\bga_2)\xi_1^2}{\sqrt{p}(\beta_1^{tr})^2\beta_1}\right|
\\
&=&O(p^{-1/2}).
\eqn

Next, we give  the proof of \eqref{le3.1.3}. The random part is analogous  to the proof of  \eqref{le3.1.1}, which is, almost surely
$$\frac{1}{{p}}\bga_1'\bbE\bbM^{-1}\bbE'\bga_2- \frac{1}{{p}}\E\bga_1'\bbE\bbM^{-1}\bbE'\bga_2\to 0,$$
and we omit the details here. Next, we focus on the nonrandom part. 
By means of the inverse of the block matrix, we have 
\begin{align*}
  &\frac{1}{{p}}\E \bga_1'\bbE\bbM^{-1}\bbE'\bga_2=\frac{1}{{p}}\sum_{ij}\E\bga_{1}'\bbe_i  M^{ij}\bbe_j'\bga_2\\
=&\E\bga_{1}'\bbe_1M^{11}\bbe_1'\bga_2+(p-1)\E\bga_{1}'\bbe_1 M^{12}\bbe_2'\bga_2)\\
=&\E \frac{\bga_{1}'\bbe_1\bbe_1'\bga_2}{\beta_1}-(p-1)\E \frac{\bga_{1}'\bbe_1\bbu_1'\bbM_1^{-1}\bbE_1'\bbQ_{\bbj_{-t}}\bbe_1\bbe_2'\bga_2}{p\beta_1}\\
=&\E \frac{\bga_{1}'\bga_2}{\beta_1^{tr}}
-(p-1)\E \frac{\bbu_1'\bbM_1^{-1}\bbE_1'\bbQ_{\bbj_{-t}}\bga_{1}\bbe_2'\bga_2}{p\beta_1^{tr}}\\
&+\E \frac{\xi_1\bga_{1}'\bbe_1\bbe_1'\bga_2}{\beta_1\beta_1^{tr}}
-(p-1)\E \frac{\bga_{1}'\bbe_1\bbu_1'\bbM_1^{-1}\bbE_1'\bbQ_{\bbj_{-t}}\bbe_1\bbe_2'\bga_2\xi_1}{p\beta_1\beta_1^{tr}}.
\end{align*}
It follows from \eqref{eq3.1.2} that 
\begin{align}\label{beta1tra12}
\E \frac{\bga_{1}'\bga_2}{\beta_1^{tr}}\to-\frac{\bga_{1}'\bga_2}{z(1+\underline{s}_t(z)+\frac{c-1+\alpha_{m-t}}{cz})}.
\end{align}
Using the  inverse of block matrix again, we obtain 
 \begin{gather*}
\E\bbu_1'\bbM_1^{-1}\bbE_1'\bbQ_{\bbj_{-t}}\bga_{1}\bbe_2'\bga_2\\
=\E\frac{\bbe_2'\bbQ_{\bbj_{-t}}\bga_{1}\bbe_2'\bga_2}{\beta_{12}}-\E\frac{1}{\beta_{12}p}\bbe_2'\bbQ_{\bbj_{-t}}\bbE_{12}\bbM_{12}^{-1}\bbE_{12}'\bbQ_{\bbj_{-t}}\bga_{1}\bbe_2'\bga_2,
 \end{gather*}
which together with \eqref{betr}, \eqref{xik}  and  in-out-exchange foumula \eqref{in-out} implies 
\begin{align*}
  &\E \frac{\bbu_1'\bbM_1^{-1}\bbE_1'\bbQ_{\bbj_{-t}}\bga_{1}\bbe_2'\bga_2}{\beta_1^{tr}}\\=&\E\frac{\bga_{1}'\bbQ_{\bbj_{-t}}\bga_2}{(\beta_{12}^{tr})^2}-\E\frac{\bga_{1}\bbQ_{\bbj_{-t}}\bbE_{12}\bbM_{12}^{-1}\bbE_{12}'\bbQ_{\bbj_{-t}}\bga_2
}{p(\beta_{12}^{tr})^2}+o(1)\\
=&-\E\frac{z\bga_{1}\bbQ_{\bbj_{-t}}\widehat\bbM_{12}^{-1}\bbQ_{\bbj_{-t}}\bga_2
}{(\beta_{12}^{tr})^2}+o(1).
\end{align*}
Here,  $\bbE_{12}$ is the $n\times (p-2)$ submatrix of $\bbE$ with the first  and second  columns removed. 
$\beta_{12}$, $\beta_{12}^{tr}$, $\bbM_{12}^{-1}$  and $\widehat\bbM_{12}^{-1}$ are denoted analogously. Note that 
$\E \frac{\xi_1\bga_{1}'\bbe_1\bbe_1'\bga_2}{\beta_1\beta_1^{tr}}
$ and $ \E \frac{\bga_{1}'\bbe_1\bbu_1'\bbM_1^{-1}\bbE_1'\bbQ_{\bbj_{-t}}\bbe_1\bbe_2'\bga_2\xi_1}{\beta_1\beta_1^{tr}}$ are both $o(1)$ because of \eqref{betr} and \eqref{xik}.  Similar to (5.9) in \cite{BaiM07A},  we have 
\begin{align*}
  \E \bga_{1}'\bbQ_{\bbj_{-t}}\widehat\bbM_{12}^{-1}\bbQ_{\bbj_{-t}}\bga_2-  \bga_{1}\bbQ_{\bbj_{-t}}(-z \underline{s}_t(z) \bbQ_{\bbj_{-t}}-z\bbI)^{-1}\bbQ_{\bbj_{-t}}\bga_2\to0.
\end{align*}
As $\bga_{1}'\bbQ_{\bbj_{-t}}$ and $\bbQ_{\bbj_{-t}}\bga_2$ are both eigenvectors of $\bbQ_{\bbj_{-t}}$, we obtain
\begin{align*}
  \bga_{1}\bbQ_{\bbj_{-t}}(z \underline{s}_t(z) \bbQ_{\bbj_{-t}}+z\bbI)^{-1}\bbQ_{\bbj_{-t}}\bga_2=\frac{\bga_{1}\bbQ_{\bbj_{-t}}\bga_2}{z \underline{s}_t(z)+z},
\end{align*}
which together with \eqref{beta1tra12}
 and the fact that 
 \begin{align*}
  \E \beta_1^{-1}-\E \beta_{12}^{-1}\to0,
\end{align*}
 completes the proof of  \eqref{le3.1.3}. Thus, the proof of this lemma is complete. 
%
\end{proof}

\end{document}